\theoremstyle{plain}
\newtheorem{lemma}{Lemma}[section]
\theoremstyle{remark}
\newtheorem{definition}[lemma]{Definition}
\author[O. Tout]{Omar Tout}
\address{LaBRI, Universit\'e de Bordeaux, 351 cours de la Lib\'eration, 33 400
Talence, France}
\email{omar.tout@labri.fr}
\title[Polynomiality property of the structure coefficients of double-class algebras]{A general framework for the polynomiality property of the structure coefficients of double-class algebras}
\keywords{Structure coefficients of centers of group algebras and double-class algebras, polynomiality property of the structure coefficients}
\subjclass[2010]{05E15}
\theoremstyle{plain}
\newtheorem{theoreme}{Theorem}[section]
\newtheorem{prop}[theoreme]{Proposition}
\newtheorem{ex}{Example}[section]
\newtheorem{c-ex}{Counter-example}[section]
\newtheorem{que}{Question}[section]
\newtheorem{cor}[theoreme]{Corollary}
\newtheorem{lem}[theoreme]{Lemma}
\theoremstyle{definition}
\theoremstyle{remark}
\newtheorem*{rem}{Remark}
\newtheorem{obs}[subsection]{Observation}
\newtheorem*{notation}{Notation}
\date{}
\DeclareMathOperator{\ct}{ct}
\DeclareMathOperator{\SC}{SC}
\DeclareMathOperator{\NCSym}{NCSym}
\DeclareMathOperator{\diag}{diag}
\newcommand{\Hecke}{$\mathbb{C}[\mathcal{B}_n\setminus \mathcal{S}_{2n}/\mathcal{B}_n]$}
\newcommand{\hecke}{$(\mathcal{S}_{2n}, \mathcal{B}_n)$}
\begin{document}
\maketitle
\begin{abstract}
Take a sequence of couples $(G_n,K_n)_n$, where $G_n$ is a group and $K_n$ is a sub-group of $G_n.$ Under some conditions, we are able to give a formula that shows the form of the structure coefficients that appear in the product of double-classes of $K_n$ in $G_n.$ We show how this can give us a similar result for the structure coefficients of the centers of group algebras. 

These formulas allow us to re-obtain the polynomiality property of the structure coefficients in the cases of the center of the symmetric group algebra 
and the Hecke algebra of the pair $(\mathcal{S}_{2n},\mathcal{B}_{n}).$ We also give a new polynomiality property for the structure coefficients of the center of the hyperoctahedral group algebra and the double-class algebra $\mathbb{C}[diag(\mathcal{S}_{n-1})\setminus \mathcal{S}_n\times \mathcal{S}^{opp}_{n-1}/ diag(\mathcal{S}_{n-1})].$
\end{abstract}

\section{Introduction}

The structure coefficients define the product of basis elements of a finite dimensional algebra. Compute these coefficients is essential because it allows to compute all the products in the considered algebra. However, giving an explicit formula for the structure coefficients is a difficult problem even for specific algebras. 

\subsection{Background}

The structure coefficients of centers of finite group algebras have been the most studied in the literature. By Frobenius theorem, see \cite[Lemma 3.3]{JaVi90} and the appendix of Zagier in \cite{lando2004graphs}, these coefficients are expressed in terms of irreducible characters. This links the study of these structure coefficients to representation theory of finite groups.


Other important cases of algebras such that double-class algebras were also studied in the literature. The author has recently established, in his paper \cite{toutAFrobe}, a theorem  similar to that of Frobenius. This theorem expresses the structure coefficients of double-class algebras of Gelfand pairs in terms of zonal spherical functions. 



The case of the center of the symmetric group algebra is particularly interesting and many authors have studied it in details. To compute the structure coefficients of the center of the symmetric group algebra one should understand the cycle-type of product of permutations, see for example the papers \cite{bertram1980decomposing}, \cite{boccara1980nombre}, \cite{Stanley1981255}, \cite{walkup1979many}, \cite{GoupilSchaefferStructureCoef} and \cite{JaVi90} which deal with particular cases of these coefficients. Many authors, see \cite{Stanley1981255}, \cite{JaVi90}, \cite{jackson1987counting}, \cite{GoupilSchaefferStructureCoef}, used the irreducible characters to compute these coefficients but these results were also difficult to be found. Despite all efforts done to compute the structure coefficients of the center of the symmetric group algebra, there is no general formula to compute these coefficients and this problem is still open. 


The Hecke algebra of the pair $(\mathcal{S}_{2n}, \mathcal{B}_n),$ where $\mathcal{B}_n$ is the hyperoctahedral sub-group of $\mathcal{S}_{2n},$ was introduced by James in 1961 in \cite{James1961}. It has a long list of properties similar to that of the center of the symmetric group algebra, see \cite{Aker20122465} and \cite{toutejc}. In particular, they both have a basis indexed by the set of partitions of $n.$ In addition, the structure coefficients, associated to these bases, are related to symmetric functions and graphs embedded into surfaces.

By Frobenius formula, see \cite{sagan2001symmetric}, the irreducible characters of the symmetric group appear in the expansion of Schur functions in terms of power functions. This relates structure coefficients of the center of the symmetric group algebra to the theory of symmetric functions.

The pair $(\mathcal{S}_{2n}, \mathcal{B}_n)$ is a Gelfand pair, see \cite[Section VII.2]{McDo}. Structure coefficients of the Hecke algebra of the pair $(\mathcal{S}_{2n}, \mathcal{B}_n),$ are also related to the theory of symmetric functions since the zonal spherical functions of the pair $(\mathcal{S}_{2n}, \mathcal{B}_n)$ appear in the expansion of zonal polynomials in terms of power functions. The zonal polynomials are specialisations of Jack polynomials, defined by Jack in \cite{jack1970class} and \cite{jack1972xxv}. They form a basis for the algebra of symmetric functions. 

In 1975, Cori proved in his thesis, see \cite{CoriHypermaps}, that the structure coefficients of the center of the symmetric group algebra count the number of graphs embedded into oriented surfaces. This result can also be found in the book \cite{lando2004graphs} of Lando and Zvonkin and the paper \cite{jackson1990character} of Jackson and Visentin.

The Hecke algebra of the pair $(\mathcal{S}_{2n}, \mathcal{B}_n)$ has a similar combinatorial interpretation. Its structure coefficients count the number of graphs embedded into non-oriented surfaces, as established by Goulden and Jackson in \cite{GouldenJacksonLocallyOrientedMaps}. 


The relation between structure coefficients and graphs is not limited to the cases of the center of the symmetric group algebra and the Hecke algebra of the pair $(\mathcal{S}_{2n}, \mathcal{B}_n).$ For example, the structure coefficients of the Hecke algebra of the pair $(\mathcal{S}_n\times \mathcal{S}_{n-1}^{opp},\diag(\mathcal{S}_{n-1}))$ are related to particular graphs called dipoles (see Jackson and Sloss \cite{Jackson20121856}). For more details about this algebra, the reader is invited to see the papers \cite{brender1976spherical}, \cite{strahov2007generalized} and \cite{jackson2012character}. In \cite{strahov2007generalized}, Strahov shows that the zonal spherical functions of this pair generalise some properties of irreducible characters in the case of the center of the symmetric group algebra. 

\bigskip
As already stated, the computation, by a direct way or by using the irreducible characters, of the structure coefficients of the center of the symmetric group algebra is difficult. Computing the structure coefficients is even harder in the case of the Hecke algebra of the pair $(\mathcal{S}_{2n}, \mathcal{B}_n),$ see \cite{bernardi2011counting}, \cite{morales2011bijective} and \cite{vassilieva2012explicit}.

In 1959, Farahat and Higman proved in \cite{FaharatHigman1959} a polynomiality property in $n$ for the structure coefficients of the center of the symmetric group algebra. In 1999, Ivanov and Kerov gave in \cite{Ivanov1999} a combinatorial proof to Farahat and Higman's theorem. They introduce in this paper combinatorial objects which they call partial permutations.

In \cite{toutejc}, we gave a combinatorial proof for the polynomiality property of the structure coefficients of the Hecke algebra of the pair $(\mathcal{S}_{2n}, \mathcal{B}_n)$ similar to that obtained by Ivanov and Kerov in the case of the center of the symmetric group algebra. This polynomiality property was also found by Aker and Can, see \cite{Aker20122465}, and by Do{\l}{\c e}ga and Féray, see \cite{2014arXiv1402.4615D}.


Recently, Méliot has found, see \cite{meliot2013partial}, a polynomiality property, similar to that of Farahat and Higman, for the structure coefficients of the center of the group algebra of invertible matrices with coefficients in a finite field. 
It is worth mentioning that Méliot has already proved in \cite{meliot2010products} a polynomiality property for the structure coefficients of the Iwahori–Hecke algebra of the symmetric group conjectured by Francis and Wang in \cite{francis1992centers}.




\subsection{Our results}

In this paper we study the structure coefficients of a large family of double-class algebras, which include the centers of group algebras. Particularly, we are interested in the dependence in $n$ of these coefficients in the case of a sequence of double-class algebras.

Our main result implies polynomiality properties for the center of the symmetric group algebra and the Hecke algebra of the pair $(\mathcal{S}_{2n}, \mathcal{B}_n).$ We also give a polynomiality property for the structure coefficients of the double-class algebra of the pair $(\mathcal{S}_n\times \mathcal{S}_{n-1}^{opp},\diag(\mathcal{S}_{n-1})).$ These latter have a combinatorial interpretation using special graphs, see \cite{Jackson20121856}.


Another application of our generalisation is the case of the center of the hyperoctahedral group algebra. The reader is invited to see the papers \cite{geissinger1978representations} and \cite{stembridge1992projective} for the details of this algebra. Once again, our generalisation implies a polynomiality property for the structure coefficients in this case.


\subsection{Further work}

Unfortunately, our general framework does not contain the case of the center of the group algebra of invertible matrices with coefficients in a finite field and Méliot's result in this case. 

Another important case is that of the super-classes of uni-triangular matrices groups. Recently, these objects had been the subject of an intense research work, see \cite{andre2013supercharacters}, \cite{UnitriangulargroupAndre}, \cite{diaconis2008supercharacters} and \cite{yan2001representation}, and a polynomiality property for the structure coefficients in this case will be of value. In fact, the structure coefficients of super-classes of uni-triangular matrices groups can be viewed as the structure coefficients of a particular double-class algebra. However, unfortunately our general framework does not contain this case also. One of the interesting things to do, in a future work about our generalisation, is to see whether or not our general framework can be modified in order to contain these two models --Méliot's one and the super-classes of uni-triangular groups--.

The author thinks that the list of Gelfand pairs given by Strahov in \cite[Section 1.3]{strahov2007generalized} may be of particular interest to our generalisation. In fact, the majority of pairs given in that list are formed by symmetric groups (and deformations of symmetric groups). When a sequence of pairs is formed by symmetric groups, there is a good chance that this sequence enters in our generalisation, see Section \ref{sec:appl_double_classe} for more details.

\section{General framework: Definitions and main theorem}
In this section, we present our general framework for the polynomiality property of the structure coefficients and we give all necessary definitions to present our main theorems. Our general framework is about the double-class algebras.

\subsection{Hypotheses and definitions}\label{sec:hypo_defi} Let $G$ be a group and $K$ a subgroup of $G.$ A \textit{double-class} of $K$ in $G$ is a set $KxK$ for an element $x$ of $G.$ The set of double-classes of $K$ in $G$ is denoted by $K\setminus G/K.$ The \textit{double-class algebra} of $K$ in $G,$ denoted by $\mathbb{C}[K\setminus G/K],$ is the algebra over $\mathbb{C}$ with basis the (formal) sums of the elements of the double-classes of $K$ in $G.$

Let $(G_n,K_n)_n$ be a sequence where $G_n$ is a group and $K_n$ is a sub-group of $G_n$ for each $n.$ We also suppose that $G_n\subseteq G_{n+1}$ and that $K_n\subseteq K_{n+1}$ for each $n.$ For each $n,$ we consider the set $K_{n}\setminus G_{n}/K_{n}$ of double-classes of $K_{n}$ in $G_{n}.$ We require that if $x \in G_n,$ then the intersection of the $K_{n+1}$-double-class of $x$ with $G_n$ is the $K_n$-double-class of $x$: formally,
\begin{enumerate}
\item[H.0]\label{hyp0} $K_{n+1}xK_{n+1}\cap G_n=K_{n}xK_{n}$ for each $x\in G_n.$
\end{enumerate} 

For an element $x$ of $G_n,$ we denote by $\overline{x}^{n}$ the $K_n$-double-class $K_nxK_n.$ We recall that $y\in \overline{x}^{n}$ if and only if there exists two elements $k,k'\in K_{n}$ such that $x=kyk'.$

\begin{notation}
If $X$ is a finite set, we denote by ${\bf X}$ the formal sum of its elements,
$$\textbf{X}=\sum_{x\in X}x.$$
\end{notation}

For a triple elements $(x_1,x_2,x_3)$ of $G_n,$ we define $c_{1,2}^3(n)$ to be the coefficient of $\overline{\bf x_3}^{n}$ in the product $\overline{\bf x_1}^{n}\cdot \overline{\bf x_2}^{n}.$

In our framework, we suppose that for each $k\leq n,$ there exists a sub-group $K_n^k$ of $K_n$ which satisfies the following hypotheses :

\begin{enumerate}
\item[H.1]\label{hyp1} $K_n^k$ is isomorphic as a group to $K_{n-k}$.
\item[H.2]\label{hyp2} If $x\in K_k$ and $y\in K_n^k$, then we have : $x\cdot y=y\cdot x.$
\item[H.3]\label{hyp3} $K_{n+1}^k \cap K_n=K_n^k$ if $k\leq n.$
\end{enumerate}

In these hypotheses, there is no conditions on double-classes. To present the hypotheses which involve double-classes, we define a function $\mathrm{k}$ as follows :
\begin{equation*}
\mathrm{k}(X):=\min_{k\atop{X\cap K_k\neq \emptyset}}k,
\end{equation*}
for any subset $X$ of $\cup_{n\geq 1}K_n.$ This definition will be crucial for us to present as well as to prove our main results.

\begin{definition}
Let $y\in K_n$, we say that $y$ is $(k_1,k_2)$-minimal if $y\in K_m$ where $m=\mathrm{k}(K_n^{k_1}yK_n^{k_2})$.
\end{definition}

This will also be an important definition for the following. We present here the necessary conditions on double-classes in our general framework. The list of conditions is the following :

\begin{enumerate}
\item[H.4]\label{hyp5} $m_{k_1,k_2}(x):=\mathrm{k}(K_n^{k_1}xK_n^{k_2})\leq k_1+k_2$ for any $x\in K_n.$
\item[H.5]\label{hyp6} $yK_{n}^{k_1}y^{-1}\cap K_{n}^{k_2}=K_{n}^{m_{k_1,k_2}(y)},$ if $y$ is $(k_1,k_2)$-minimal.
\end{enumerate}

\begin{rem}
The hypotheses required in our general framework are inspired from particular cases already studied in relation with a polynomiality property of structure coefficients. It is remarkable that the sequence $G_n$ of groups does not appear in hypotheses H.1 to H.5: these hypotheses only involve the sequence $K_n$ of sub-groups. This will be practical for applications in the next sections of this paper. The fourth hypothesis H.4 is the most important among these hypotheses (the reader can have a look to the applications we give at the end of this paper to better understand this hypothesis). The only hypothesis which depends on the sequence $(G_n,K_n)_n,$ and not just on the sequence $(K_n)_n,$ is H.0. This hypothesis ensures the independence on $n$ of the intersection of $G_{n_0}$ with the $K_n$-double-classes of its elements for a fixed $n_0$ and a sufficiently big $n.$ It is easily verified in the particular cases of sequences $(G_n,K_n)_n$ which we will consider in the application sections \ref{sec:appl_double_classe} and \ref{sec:appl_centr}.
\end{rem}
\begin{rem}
With hypotheses H.0 to H.5, we present conditions which imply a polynomiality property for the structure coefficients. We have found them using the already known results of polynomiality, especially those of $Z(\mathbb{C}[\mathcal{S}_n])$ and $\mathbb{C}[\mathcal{B}_n\setminus \mathcal{S}_{2n}/\mathcal{B}_n].$ It is important to see whether or not the list of conditions which we give is "minimal" (that means that there isn't any hypothesis resulting from a set of other hypotheses in the list). We think that our list of hypotheses H.0 to H.5 is minimal. However, since we are interested in double-classes in our approach, we should point out that hypothesis H.3 is equivalent to the following hypothesis:
\begin{enumerate}
\item[H'.3]\label{hyp4} For any element $z\in K_n$, we have $K_{n+1}^{k_1}zK_{n+1}^{k_2}\cap K_n=K_{n}^{k_1}zK_{n}^{k_2}.$
\end{enumerate}
We give the proof of this equivalence in Observation \ref{obs:Equiv_H3_H4} which follows this remark. H'.3 seems to be more appropriate to our approach since it is presented as an hypothesis on double-classes. We decided to put H.3 in our list instead of H'.3 because it is easier to verify for our applications but we will not forget the usefulness of H'.3 in the next sections especially to prove our main results in the general case. Hence, we will use both hypotheses in the coming sections.
\end{rem}
\begin{obs}\label{obs:Equiv_H3_H4}
Hypothesis H.3 is obtained from H'.3 while considering the particular case where $k_1=k_2=k$ and $z$ is the neutral element. In the opposite direction, if H.3 is verified, for each $z\in K_n$ we have: $K_{n+1}^{k_1}z \cap K_n=K_n^{k_1}z$ for each $k_1\leq n.$ We also have, $K_{n+1}^{k_2} \cap K_n=K_n^{k_2}$ for each $k_2\leq n.$ If we multiply these two equations we get:
\begin{equation*}
\big( K_{n+1}^{k_1}z \cap K_n \big) \big( K_{n+1}^{k_2} \cap K_n \big ) = K_n^{k_1}zK_n^{k_2}.
\end{equation*}
If we develop the left hand side of this equation we get:
\begin{equation*}
K_{n+1}^{k_1}zK_{n+1}^{k_2} \cap K_{n+1}^{k_1}zK_n \cap K_nK_{n+1}^{k_2} \cap K_n.
\end{equation*}
This is equal to $K_{n+1}^{k_1}zK_{n+1}^{k_2} \cap K_n$ since $K_{n+1}^{k_1}zK_n$ and $K_nK_{n+1}^{k_2}$ both contain $K_n.$ Thus, for each $z\in K_n$ we have $K_{n+1}^{k_1}zK_{n+1}^{k_2}\cap K_n=K_{n}^{k_1}zK_{n}^{k_2}$ for any $k_1, k_2\leq n,$ which is hypothesis H'.3. Therefore hypotheses H.3 and H'.3 are equivalent.
\end{obs}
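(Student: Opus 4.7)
The two implications are very asymmetric in difficulty, so I would handle them separately. The direction H'.3 $\Rightarrow$ H.3 is a pure specialization: take $k_1=k_2=k$ and let $z$ be the identity element of $K_n$ in H'.3. Since $K_{n+1}^k$ and $K_n^k$ are both subgroups containing the identity, the products $K_{n+1}^k\cdot e\cdot K_{n+1}^k$ and $K_n^k\cdot e\cdot K_n^k$ collapse to $K_{n+1}^k$ and $K_n^k$ respectively, and the statement of H'.3 reduces to $K_{n+1}^k\cap K_n=K_n^k$, which is H.3.

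For the converse direction H.3 $\Rightarrow$ H'.3, fix $z\in K_n$ and $k_1,k_2\le n$. The inclusion $K_n^{k_1}zK_n^{k_2}\subseteq K_{n+1}^{k_1}zK_{n+1}^{k_2}\cap K_n$ is immediate, since H.3 yields $K_n^{k_i}\subseteq K_{n+1}^{k_i}$ and all three factors of $K_n^{k_1}zK_n^{k_2}$ lie in $K_n$. For the non-trivial reverse inclusion my plan is to use H.3 in two forms: first, translated on the right by $z$, i.e. $K_{n+1}^{k_1}z\cap K_n=K_n^{k_1}z$ (which follows from H.3 together with $z,z^{-1}\in K_n$), and second, in its original form $K_{n+1}^{k_2}\cap K_n=K_n^{k_2}$. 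Multiplying these two set equalities gives
$$\bigl(K_{n+1}^{k_1}z\cap K_n\bigr)\bigl(K_{n+1}^{k_2}\cap K_n\bigr)=K_n^{k_1}zK_n^{k_2}.$$
What remains is to identify the left-hand side with $K_{n+1}^{k_1}zK_{n+1}^{k_2}\cap K_n$. Expanding the product of intersections produces the four-fold intersection $K_{n+1}^{k_1}zK_{n+1}^{k_2}\cap K_{n+1}^{k_1}zK_n\cap K_nK_{n+1}^{k_2}\cap K_n$, and the two middle terms are redundant because each contains $K_n$ (using that $z\in K_n$ and that $K_n$ is a subgroup), whereas the intersection is already constrained to lie in $K_n$ by the last factor.

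The main obstacle, I anticipate, is rigorously justifying the last step: in general the "distributive" identity $(A\cap B)(C\cap D)=AC\cap AD\cap BC\cap BD$ fails for arbitrary subsets of a group, so I would need to check it by picking an arbitrary element of each side in the present group-theoretic setting, where every one of $A,B,C,D$ is either a subgroup of $K_n$ or $K_{n+1}$ or a coset of such a subgroup by $z\in K_n$. Once this identity is established, combining it with the two absorption observations above closes the argument and delivers H'.3.
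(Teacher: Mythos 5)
Your proposal reproduces the paper's own argument step for step: the specialization $k_1=k_2=k$, $z=e$ for the direction H'.3 $\Rightarrow$ H.3, and, for the converse, the translated identities $K_{n+1}^{k_1}z\cap K_n=K_n^{k_1}z$ and $K_{n+1}^{k_2}\cap K_n=K_n^{k_2}$, their product, the expansion into the four-fold intersection, and the absorption of the two middle terms (which is indeed valid, since $z\in K_n$ forces $K_{n+1}^{k_1}zK_n\supseteq zK_n=K_n$ and $K_nK_{n+1}^{k_2}\supseteq K_n$). The easy direction and the absorption step are correct.

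The obstacle you single out at the end, however, is not a routine verification but the entire content of the claim. Expanding $(A\cap B)(C\cap D)$ only gives the inclusion $(A\cap B)(C\cap D)\subseteq AC\cap AD\cap BC\cap BD$; after absorption this chain yields $K_n^{k_1}zK_n^{k_2}\subseteq K_{n+1}^{k_1}zK_{n+1}^{k_2}\cap K_n$, which is the trivial direction of H'.3. The reverse inclusion of the expansion is, in this instance, exactly the statement $K_{n+1}^{k_1}zK_{n+1}^{k_2}\cap K_n\subseteq K_n^{k_1}zK_n^{k_2}$ you are trying to prove, so ``checking it by picking an arbitrary element of each side'' is circular. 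Moreover it is not a formal consequence of H.3 alone: take $K_{n+1}=\mathcal{S}_3$, $K_n=\langle(1\,2)\rangle$, $K_{n+1}^{k_1}=A_3$, $K_{n+1}^{k_2}=\langle(1\,3)\rangle$, so that $K_n^{k_1}=K_n^{k_2}=\lbrace e\rbrace$ and H.3 holds for these subgroups; with $z=e$ one has $K_{n+1}^{k_1}zK_{n+1}^{k_2}=A_3\langle(1\,3)\rangle=\mathcal{S}_3$, hence $K_{n+1}^{k_1}zK_{n+1}^{k_2}\cap K_n=\langle(1\,2)\rangle\neq\lbrace e\rbrace=K_n^{k_1}zK_n^{k_2}$. (This toy configuration violates H.1, H.2, H.4 and H.5, so the equivalence may still be true inside the full framework, and it does hold for the concrete sequences treated later, where explicit double-coset invariants such as coset-type are available; but it cannot be extracted from H.3 by set manipulation.) This is the same gap left open by the paper's own proof of the observation; to close it you would need either to invoke additional hypotheses or to verify the nontrivial inclusion directly in each application.
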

\begin{obs}\label{obs:H_4_ind_n}
According to H'.3, the minimality does not depend on $n.$ In fact, let us fix three integers $n_0,$ $k_1$ and $k_2,$ and an element $z\in K_{n_0},$ and suppose that $\mathrm{k}(K_{n_0}^{k_1}zK_{n_0}^{k_2})$ is an integer $k_{n_0,k_1,k_2}.$ By H.4,  $k_{n_0,k_1,k_2}\leq k_1+k_2.$ For any $n\geq n_0+1,$ we have $$K_{n}^{k_1}zK_n^{k_2}\cap K_{k_{n_0,k_1,k_2}}=K_{n_0}^{k_1}zK_{n_0}^{k_2}\cap K_{k_{n_0,k_1,k_2}},$$ using H'.3. Thus, for each $n\geq n_0,$ we have $\mathrm{k}(K_{n}^{k_1}zK_{n}^{k_2})=\mathrm{k}(K_{n_0}^{k_1}zK_{n_0}^{k_2}).$ This proves that $m_{k_1,k_2}(x)$ does not depend on $n.$ We will use this observation in the proof of Theorem \ref{mini_th}.
\end{obs}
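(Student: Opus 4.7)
The plan is, with $k_1$, $k_2$, and $z\in K_{n_0}$ fixed, to show that $\mathrm{k}(K_{n}^{k_1}zK_{n}^{k_2})$ takes the same value for every $n\geq n_0$. The main tools will be the equivalent reformulation H'.3 of H.3 established in Observation \ref{obs:Equiv_H3_H4}, together with the easy remark that $z$ itself lies in $K_{n_0}^{k_1}zK_{n_0}^{k_2}\cap K_{n_0}$, so that the number $m_0:=\mathrm{k}(K_{n_0}^{k_1}zK_{n_0}^{k_2})$ satisfies $m_0\leq n_0$.

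The core step is to prove the intersection identity
\begin{equation*}
K_n^{k_1} z K_n^{k_2} \cap K_{m_0} \;=\; K_{n_0}^{k_1} z K_{n_0}^{k_2} \cap K_{m_0}
\end{equation*}
for all $n\geq n_0$, by induction on $n$. The base case $n=n_0$ is trivial; for the inductive step I would apply H'.3 to $z$ at level $n+1$, which gives $K_{n+1}^{k_1} z K_{n+1}^{k_2} \cap K_n = K_n^{k_1} z K_n^{k_2}$, and then intersect both sides with $K_{m_0}\subseteq K_{n_0}\subseteq K_n$ to absorb the outer $\cap K_n$ and invoke the inductive hypothesis. Once this identity is in hand, the right-hand side is nonempty and contains no element of any $K_k$ with $k<m_0$ (by the very definition of $m_0$); transporting these two facts to the left yields $\mathrm{k}(K_n^{k_1} z K_n^{k_2})=m_0$ independently of $n$.

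The only real obstacle is the bookkeeping: one must verify the nested chain $K_{m_0}\subseteq K_{n_0}\subseteq K_{n_0+1}\subseteq\cdots$ and the analogous chain $K_{n_0}^{k_i}\subseteq K_{n_0+1}^{k_i}\subseteq\cdots$, both of which follow from H.3 (since $K_{j+1}^k\cap K_j=K_j^k$ forces $K_j^k\subseteq K_{j+1}^k$) combined with the standing assumption $K_n\subseteq K_{n+1}$. One must also check that H'.3 is applied with the \emph{same} representative $z$ at each level so that no change of representative intervenes between successive intersections. Neither point is deep, but without them the chain of intersection equalities would fail to compose.
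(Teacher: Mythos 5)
Your proposal is correct and follows essentially the same route as the paper: both arguments iterate H'.3 to obtain the stabilized intersection $K_n^{k_1}zK_n^{k_2}\cap K_{m_0}=K_{n_0}^{k_1}zK_{n_0}^{k_2}\cap K_{m_0}$ and then read off the invariance of the minimum from the definition of $\mathrm{k}$. You merely make explicit the induction on $n$ and the nesting of the chains $K_j\subseteq K_{j+1}$ and $K_j^{k_i}\subseteq K_{j+1}^{k_i}$, which the paper leaves implicit.
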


\subsection{Main theorems}

Our main result in this paper is Theorem \ref{main_th} presented below. It gives us the general form of the structure coefficients of the double-class algebra under the conditions given in Section \ref{sec:hypo_defi}. The polynomiality property for the structure coefficients for specific algebras can be obtained directly by using this theorem. In addition, this theorem gives not only the polynomiality property of structure coefficients, for specific algebra, but also the exact values of these coefficients. 

\begin{theoreme}\label{main_th}
Let $(G_n,K_n)_n$, be a sequence of pairs, where $G_n$ is a group and $K_n$ is a sub-group of $G_n$ for each $n,$ satisfying hypotheses H.0 to H.5. For a fixed integer $n_0$ and three elements $x_1$, $x_2$ and $x_3$ of $G_{n_0},$ we denote by $k_1$ (resp. $k_2$, $k_3$) the integer $\mathrm{k}(\bar{x_1}^{n_0})$ (resp. $\mathrm{k}(\bar{x_2}^{n_0})$, $\mathrm{k}(\bar{x_3}^{n_0})$). The structure coefficient $c_{1,2}^3(n_0)$ of $\bar{{\bf x_3}}^{n_0}$ in the expansion of the product $\bar{{\bf x_1}}^{n_0}\bar{{\bf x_2}}^{n_0}$ is given by the following formula :
\begin{eqnarray}\label{eq:forme_coef_stru}
c_{1,2}^3(n_0)=&&\frac{|\bar{x_1}^{n_0}||\bar{x_2}^{n_0}||K_{n_0-k_1}||K_{n_0-k_2}|}{|K_{n_0}||\bar{x_3}^{n_0}|} \nonumber \\
&&\sum_{ \max(k_1,k_2,k_3)\leq k\leq \min(k_1+k_2,n_0), x\in G_k, \atop{\text{ $x_1^{-1}xx_2^{-1}\in K_k$ and is $(k_1,k_2)$-minimal} \atop{\bar{x}^{n_0}=\bar{x_3}^{n_0}}}}\frac{1}{|K_{n_0-k}||K_{n_0}^{k_1}x_1^{-1}xx_2^{-1}K_{n_0}^{k_2}\cap K_{m_{k_1,k_2}(x_1^{-1}xx_2^{-1})}|}.
\end{eqnarray}
\end{theoreme}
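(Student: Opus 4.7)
My plan is to reduce $c_{1,2}^3(n_0)$ to counting a specific set inside $K_{n_0}$ and then decompose that set via the $K_{n_0}^{k_1}\times K_{n_0}^{k_2}$ action, pushing the decomposition through the assignment $y\mapsto x:=x_1 y x_2$. First I use the $K_{n_0}$-bi-invariance of double-classes to replace $x_i$ by its minimal representative in $K_{k_i}$. Expanding $\overline{\bf x_1}^{n_0}\cdot\overline{\bf x_2}^{n_0}$, parameterizing each $y_i\in\overline{x_i}^{n_0}$ as $a_i x_i b_i$ with $(a_i,b_i)\in K_{n_0}\times K_{n_0}$ (constant fibre size $|K_{n_0}|^2/|\overline{x_i}^{n_0}|$ over each $y_i$), setting $c=b_1 a_2$, and exploiting the $K_{n_0}$-bi-invariance of the condition ``lies in $\overline{x_3}^{n_0}$'' to absorb $a_1$ and $b_2$, I obtain
\[
c_{1,2}^3(n_0)=\frac{|\overline{x_1}^{n_0}||\overline{x_2}^{n_0}|}{|K_{n_0}||\overline{x_3}^{n_0}|}\cdot|S|,\qquad S:=\{c\in K_{n_0}:x_1 c x_2\in\overline{x_3}^{n_0}\}.
\]

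Next, H.2 combined with $x_i\in K_{k_i}$ yields $x_1\alpha=\alpha x_1$ for all $\alpha\in K_{n_0}^{k_1}$, and similarly on the right, so that $S$ is a disjoint union of $K_{n_0}^{k_1}\times K_{n_0}^{k_2}$-double-cosets. For each such double-coset $D$, H.4 provides a $(k_1,k_2)$-minimal representative $y\in D\cap K_m$ with $m:=m_{k_1,k_2}(y)\leq k_1+k_2$. The orbit-stabilizer formula gives $|D|=|K_{n_0}^{k_1}||K_{n_0}^{k_2}|/|K_{n_0}^{k_1}\cap y K_{n_0}^{k_2}y^{-1}|$. Applying H.5 (invoked for both $y$ and $y^{-1}$, whose $(k_1,k_2)$-minimality with the same $m$ follows from the inversion-invariance of $\mathrm{k}$ and Observation~\ref{obs:H_4_ind_n}), combined with H.2 (which ensures $y K_{n_0}^m y^{-1}=K_{n_0}^m$) and H.1, I derive $|D|=|K_{n_0-k_1}||K_{n_0-k_2}|/|K_{n_0-m}|$.

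Finally, each orbit $D$ contains $|D\cap K_m|$ minimal representatives, and the map $y\mapsto x:=x_1 y x_2$ is injective with inverse $x\mapsto x_1^{-1}x x_2^{-1}$; since $y\in S$, the resulting $x$ satisfies $\overline{x}^{n_0}=\overline{x_3}^{n_0}$. Writing $|D|=\sum_{y\in D\cap K_m}|D|/|D\cap K_m|$ and reindexing $\sum_D|D|$ as a sum over pairs $(k,x)$ satisfying the conditions of the theorem (with $k=m$ naturally associated to each $x$) yields the stated formula for $|S|$ and hence for $c_{1,2}^3(n_0)$. The range $\max(k_1,k_2,k_3)\leq k\leq\min(k_1+k_2,n_0)$ is justified by: $m\leq k_1+k_2$ from H.4; $m\geq\max(k_1,k_2)$ because H.5 forces $K_{n_0}^m\subseteq K_{n_0}^{k_2}$ (and, after conjugation, $K_{n_0}^m$ sits inside a conjugate of $K_{n_0}^{k_1}$, giving the required size estimate); $m\geq k_3$ because $x=x_1 y x_2\in K_m$ (as $k_1,k_2\leq m$) forces $k_3=\mathrm{k}(\overline{x}^{n_0})\leq m$; and $m\leq n_0$ trivially.

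I expect the main obstacle to be the stabilizer identity $|K_{n_0}^{k_1}\cap y K_{n_0}^{k_2}y^{-1}|=|K_{n_0}^m|$ used in the previous step. H.5 directly delivers only $|y K_{n_0}^{k_1}y^{-1}\cap K_{n_0}^{k_2}|=|K_{n_0}^m|$, which is the ``opposite'' orientation; transferring this equality to the form needed for the stabilizer requires applying H.5 to $y^{-1}$ as well, and the verification that $y^{-1}$ is itself $(k_1,k_2)$-minimal with the same $m$ is where the combined force of H.3 (equivalently H'.3), the inversion-invariance of $\mathrm{k}$, and Observation~\ref{obs:H_4_ind_n} becomes essential.
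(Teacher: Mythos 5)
Your argument is correct in substance and arrives at the stated formula, but it takes a genuinely different route from the paper. The paper never expands $\overline{\bf x_1}^{n_0}\overline{\bf x_2}^{n_0}$ directly: it introduces the set $PE_{n_0}$ of partial elements with the product of Definition \ref{def:_prod_ele_part}, passes to the $K_{n_0}\times K_{n_0}$-invariant subalgebra $\mathcal{A}_{n_0}$ spanned by the $\mathfrak{a}_{(x;k)}(n_0)$, computes the structure constants of these generators in Proposition \ref{F}, and only then transports everything to $\mathbb{C}[K_{n_0}\setminus G_{n_0}/K_{n_0}]$ through the multiplicative map $\psi_{n_0}$ of Proposition \ref{prop:compa_psi_n}. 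You bypass that machinery entirely: your double-counting reduction $c_{1,2}^3(n_0)=\frac{|\overline{x_1}^{n_0}||\overline{x_2}^{n_0}|}{|K_{n_0}||\overline{x_3}^{n_0}|}\,|S|$ with $S=\lbrace c\in K_{n_0}: x_1cx_2\in\overline{x_3}^{n_0}\rbrace$ plays the role of $\psi_{n_0}$ (both rest on the same use of H.2 to move $K_{n_0}^{k_1}$ and $K_{n_0}^{k_2}$ past $x_1$ and $x_2$), and your decomposition of $S$ into $K_{n_0}^{k_1}\times K_{n_0}^{k_2}$-double-cosets with the orbit--stabilizer count $|D|=|K_{n_0}^{k_1}||K_{n_0}^{k_2}|/|K_{n_0}^{k_1}\cap yK_{n_0}^{k_2}y^{-1}|$ is the exact analogue of the transitive $K_{n_0}$-action on the set $A_{(x_1;k_1),(x_2;k_2)}^{(x;k)}(n_0)$ inside the proof of Proposition \ref{F}. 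The combinatorial core --- H.4 supplying a minimal representative and H.5 identifying the stabilizer with $K_{n_0}^{m}$, hence of size $|K_{n_0-m}|$ by H.1 --- is identical in the two arguments; what your version buys is brevity and the elimination of an auxiliary (non-associative) algebra, at the cost of not producing the intermediate object $\mathcal{A}_n$.

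Two fine points. The orientation issue you flag is real, but the paper has it too: its stabilizer is $X^{-1}K_{n}^{k_1}X\cap K_{n}^{k_2}$ while H.5 is stated for $XK_{n}^{k_1}X^{-1}\cap K_{n}^{k_2}$, and the paper silently identifies the two cardinalities; your plan of applying H.5 to $y^{-1}$, after checking that $K_n^{k_1}y^{-1}K_n^{k_2}=(K_n^{k_2}yK_n^{k_1})^{-1}$ has the same value of $\mathrm{k}$, is a reasonable way to make this honest. Second, the minimal representative of $\overline{x_i}^{n_0}$ lies in $G_{k_i}$, not in $K_{k_i}$ as you wrote; this is harmless, since H.2 is used throughout the paper (e.g.\ in the proof of Proposition \ref{prop:compa_psi_n}) in the extended form where an element of $G_k$ commutes with $K_n^k$, and that extended form is all your argument needs.
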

\begin{proof}
See Section \ref{sec:preuve_th_princ} dedicated to the proof of this theorem.
\end{proof}

Applications of Theorem \ref{main_th} are given in Sections \ref{sec:alg_de_hecke} and \ref{alg_double_classe_diag(S_n-1)}. This theorem gives us a formula for the structure coefficients but the size of the set $K_n^{k_1}x_1^{-1}xx_2^{-1}K_n^{k_2}\cap K_{m_{k_1,k_2}(x_1^{-1}xx_2^{-1})}$ is not easily known in the general case (contrary to that of $K_{n-k}$). In addition, the sum index in the equation \eqref{eq:forme_coef_stru} is quite complicated. We can make this theorem easier to use if we are interested in the polynomiality property for the structure coefficients (and not in their exact values). To this purpose we give a second theorem below.

\begin{theoreme}\label{mini_th} Let $n_1$ be a fixed integer and let $x_1,x_2$ and $x_3$ be three elements of $G_{n_1}.$ We denote by $k_1$ (resp. $k_2$ and $k_3$) the integer $\mathrm{k}(\bar{x_1}^n)$ (resp. $\mathrm{k}(\bar{x_2}^n)$ and $\mathrm{k}(\bar{x_3}^n)$). For any $n$ sufficiently big, there exists non-negative rational numbers $a_{1,2}^{3}(k)$ for any $k_3\leq k\leq k_1+k_2$ independent of $n$ such that the structure coefficient $c_{1,2}^3(n)$ can be written as follows :

\begin{equation}
c_{1,2}^3(n)=\frac{|\bar{x_1}^n||\bar{x_2}^n||K_{n-k_1}||K_{n-k_2}|}{|K_n||\bar{x_3}^n|}\sum_{ \max(k_1,k_2,k_3)\leq k\leq k_1+k_2}\frac{a_{1,2}^{3}(k)}{|K_{n-k}|}.
\end{equation}

\end{theoreme}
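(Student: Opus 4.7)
The plan is to deduce Theorem \ref{mini_th} directly from Theorem \ref{main_th}: regroup the summands in \eqref{eq:forme_coef_stru} according to the value of $k$, and prove that for each $k$ with $\max(k_1,k_2,k_3)\leq k\leq k_1+k_2$, the inner sum over $x$ stabilizes as $n\to\infty$ to a finite, $n$-independent, non-negative rational number which we shall denote $a_{1,2}^{3}(k)$. For $n\geq k_1+k_2$ the upper bound $\min(k_1+k_2,n)$ appearing in \eqref{eq:forme_coef_stru} already equals $k_1+k_2$, so once the stabilization is established, substituting back yields precisely the formula of Theorem \ref{mini_th}; the values $a_{1,2}^{3}(k)$ for $k_3\leq k<\max(k_1,k_2)$, which do not contribute to the sum, may simply be set to zero.

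There are three sources of $n$-dependence to be controlled. First, by Observation \ref{obs:H_4_ind_n}, both the $(k_1,k_2)$-minimality of $x_1^{-1}xx_2^{-1}$ and the integer $m_{k_1,k_2}(x_1^{-1}xx_2^{-1})$ are independent of $n$; an entirely analogous argument, using H.0 in place of H'.3, shows that $k_i=\mathrm{k}(\bar{x_i}^{n})$ is independent of $n$ for $n\geq n_1$. Second, for $x\in G_k$ with $k\leq k_1+k_2\leq n$, iterating H.0 gives $K_{n}xK_{n}\cap G_{n_1}=K_{n_1}xK_{n_1}$, so that the condition $\bar{x}^{n}=\bar{x_3}^{n}$ is equivalent to the $n$-independent condition $x_3\in K_{n_1}xK_{n_1}$. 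Third, and this is the decisive step, for every $(k_1,k_2)$-minimal $y\in K_m$ with $m=m_{k_1,k_2}(y)\leq k_1+k_2$, the integer $|K_{n}^{k_1}\,y\,K_{n}^{k_2}\cap K_{m}|$ stabilizes once $n\geq k_1+k_2$. Indeed, since $y\in K_m\subseteq K_{\max(m,k_1,k_2)}$, iterated application of H'.3 yields, for every $N\geq \max(m,k_1,k_2)$,
\begin{equation*}
K_{N}^{k_1}\,y\,K_{N}^{k_2}\cap K_{\max(m,k_1,k_2)} \;=\; K_{\max(m,k_1,k_2)}^{k_1}\,y\,K_{\max(m,k_1,k_2)}^{k_2},
\end{equation*}
and intersecting both sides with $K_m$ produces a set that no longer depends on $N$.

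Combining these three items, for $n\geq\max(n_1,k_1+k_2)$ the inner sum
\begin{equation*}
a_{1,2}^{3}(k)\;:=\;\sum_{\substack{x\in G_{k},\; x_1^{-1}xx_2^{-1}\in K_{k}\\ x_1^{-1}xx_2^{-1}\text{ is }(k_1,k_2)\text{-minimal}\\ \bar{x}^{n}=\bar{x_3}^{n}}} \frac{1}{|K_{n}^{k_1}\, x_1^{-1}xx_2^{-1}\, K_{n}^{k_2}\cap K_{m_{k_1,k_2}(x_1^{-1}xx_2^{-1})}|}
\end{equation*}
is $n$-independent, and it is non-negative and rational as a finite sum of reciprocals of positive integers. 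The main obstacle is precisely the intersection-stabilization in the third item: because $K_n^{k_1}yK_n^{k_2}\cap K_m$ involves a two-sided action by two possibly distinct subgroups, one cannot conclude directly from H.3, and one has to pass through its reformulation H'.3 (whose equivalence to H.3 is the content of Observation \ref{obs:Equiv_H3_H4}); the $(k_1,k_2)$-minimality of $y$ is precisely what keeps the iterated descent in H'.3 compatible with the intersection by $K_m$.
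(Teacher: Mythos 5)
Your proposal is correct and follows essentially the same route as the paper: both deduce the statement from Theorem \ref{main_th} by showing that the sum index, the integers $k_i$ and $m_{k_1,k_2}(x_1^{-1}xx_2^{-1})$, and the intersection $K_n^{k_1}x_1^{-1}xx_2^{-1}K_n^{k_2}\cap K_{m_{k_1,k_2}(x_1^{-1}xx_2^{-1})}$ all stabilize for $n$ large, using H.0, H'.3 and Observation \ref{obs:H_4_ind_n}. You merely spell out the iterated descents that the paper's proof leaves implicit, which is a welcome clarification rather than a deviation.
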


\begin{proof} We start the proof by applying Theorem \ref{main_th} to $x_1,x_2,x_3$ for any integer $n_0\geq n_1.$ We can do so because the inclusion $G_n\subset G_{n+1}$ allows us to look at $x_1,x_2,x_3$ as elements of $G_{n_0}.$ According to hypothesis H. 0, the integers $k_1$, $k_2$ and $k_3$ are independent of $n_0$ if $n_0$ is big enough, and so is the size of
$K_{n_0}^{k_1}x_1^{-1}xx_2^{-1}K_{n_0}^{k_2}\cap K_{m_{k_1,k_2}(x_1^{-1}xx_2^{-1})}$ (by H'.3 and Observation \ref{obs:H_4_ind_n} implies the independence on $n_0$ of $m_{k_1,k_2}(x_1^{-1}xx_2^{-1})$). It remains to us to remark that hypothesis H.0, on the double-classes of $K_n$ in $G_n,$ with Observation \ref{obs:H_4_ind_n} ensure us that the sum index in Theorem \ref{main_th} does not depend on $n$ if $n$ is sufficiently big. Theorem \ref{mini_th} is thus a consequence of Theorem \ref{main_th}.
\end{proof}


Theorem \ref{main_th} allows us to obtain the exact values of $c_{1,2}^3(n)$ in special cases of triple $(x_1,x_2,x_3)$ (in particular algebras) while Theorem \ref{mini_th} is more adapted to give a polynomiality property to these coefficients for a general triple $(x_1,x_2,x_3).$

\section{The partial elements algebra}
We start this section by giving the definition of partial elements. To give the definition, recall that if $K$ is a subgroup of a group $G,$ a \textit{left} (resp. right) \textit{class} of $K$ in $G$ is a set $xK$ (resp. $Kx$) for a certain element $x$ of $G.$

\begin{definition}
A \textit{partial element} of $G_n$ is a triple $(C,(x;k),C')$, where $k$ is an integer between $1$ and $n,$ $C$ (resp. $C'$) is a left (resp. right) class of $K_n^k$ in $K_n$ and $x\in G_k.$
\end{definition}

In this paper, we use $\mathcal{S}_n$ to denote the symmetric group on $[n]:=\lbrace 1,\cdots,n\rbrace$ and $\mathcal{B}_n$ to denote the hyperoctahedral subgroup of $\mathcal{S}_{2n}.$ An element of $\mathcal{B}_n$ is a permutation of $2n$ which sends every pair $\lbrace 2k-1,2k\rbrace$ where $1\leq k \leq n$ to another pair with the same form. 

For example, $(\mathcal{B}_3^2, ((1\,\, 4\,\,3)(2); 2),\mathcal{B}_3^2)$ is a partial element of $\mathcal{S}_{6}$ (associated to the sequence of pairs $(\mathcal{S}_{2n},\mathcal{B}_n)$) where $\mathcal{B}_3^2$ (see Section \ref{sec:cond_group_hyp} for an explicit definition) is a group isomorphic to $\mathcal{B}_1.$

Partial elements are the principal objects that we use to prove our results. The reader should be informed that while the notation "partial elements" suggests that these elements are generalisations of partial permutations in \cite{Ivanov1999}, partial bijections in \cite{toutejc} and partial isomorphisms in \cite{meliot2013partial}, this is not actually the case. In fact, it is sufficient to remark that, in general, the number of partial elements of $G_n$ is :
$$\sum_{k=1}^n\left(\frac{|K_n|}{|K_{n-k}|}\right)^2|G_k|,$$
which does not coincide with the number of partial bijections of $[2n]$ in the case of the sequence of pairs $(\mathcal{S}_{2n},\mathcal{B}_n).$

\begin{definition}\label{def:_prod_ele_part}
Let $pe_1=(C_1,(x_1;k_1),C'_1)$ and $pe_2=(C_2,(x_2;k_2),C'_2)$ be two partial elements of $G_n.$ We define the product $pe_1\cdot pe_2$ as follows :
$$pe_1\cdot pe_2:=\frac{1}{n^{k_1}_m n^{k_2}_m |C'_1C_2\cap K_{\mathrm{k}(C'_1C_2)}|}\sum_{h\in C'_1C_2 \atop{h\text{ $(k_1,k_2)$-minimal}}}\sum_{i=1}^{n^{k_1}_m}\sum_{j=1}^{n^{k_2}_m}(C_1^i,(x_1hx_2;m),{C'}_2^j),$$
where $m=\max(k_1,k_2,\mathrm{k}(C'_1C_2))$, $n^{k_2}_m=\frac{|K_n^{k_2}|}{|K_n^{m}|}$, $n^{k_1}_m=\frac{|K_n^{k_1}|}{|K_n^{m}|}$ and the classes $C_1^{i}$ (resp. ${C'}_{2}^j$) are defined by the following equations : 
\begin{equation}\label{eq:dec_classe}
C_1=\bigsqcup_{i=1}^{n^{k_1}_m}C_1^j~~~~(\text{resp. } C'_2=\bigsqcup_{j=1}^{n^{k_2}_m}{C'}_2^j).\end{equation}
\end{definition}

\begin{rem}
Since the number $m$ given in Definition \ref{def:_prod_ele_part} is at least $\max(k_1,k_2),$ $K_n^m$ is a sub-group of $K_n^{k_1}$ and $K_n^{k_2}.$ Equation \eqref{eq:dec_classe} is the formal writing of the fact that the left (resp. right) classes of $K_n^{k_1}$ (resp. $K_n^{k_2}$) are disjoint unions of left (resp. right) classes of $K_n^m.$
\end{rem}

It is natural to see whether this product is associative or not. The product between partial bijections is associative and this allowed us to build in \cite{toutejc} a universal algebra which projects onto the Hecke algebra \Hecke \, for each $n.$ Likewise, the products between both partial permutations and partial isomorphisms defined by Ivanov/Kerov and Méliot in \cite{Ivanov1999} and \cite{meliot2013partial} were associative. Universal algebras were also presented in both papers.

The proof of associativity was difficult in both \cite{toutejc} and \cite{meliot2013partial}. We decided to avoid the associativity question between partial elements since we do not need this property to prove Theorem \ref{main_th}. 

We denote by $PE_n$ the set of partial elements of $G_n$. The set $K_n\times K_n$ acts on $PE_n$ by the following action :
$$(a,b)\cdot (C,(x;k),C')=(aC,(x;k),C'b^{-1}).$$
This defines a group action because :
\begin{eqnarray*}
(a_1,b_1)\cdot (a_2,b_2)\cdot (C,(x;k),C')&=&(a_1,b_1)\cdot (a_2C,(x;k),C'b_2^{-1})\\
&=&(a_1a_2C,(x;k),C'b_2^{-1}b_1^{-1})\\
&=&(a_1a_2,b_1b_2)\cdot (C,(x;k),C').
\end{eqnarray*}
\begin{prop}\label{P}
The action of $K_n\times K_n$ on $PE_n$ is compatible with the product in $PE_n,$ which means that :
$$(a,b)\cdot (pe_1\cdot pe_2)=((a,c)\cdot pe_1) \cdot ((c,b)\cdot pe_2),$$
for any $a,b$ and $c$ in $K_n.$
\end{prop}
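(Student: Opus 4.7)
The plan is to unfold Definition~\ref{def:_prod_ele_part} on both sides of the claimed identity and verify that the resulting formal sums coincide term by term, exploiting the cancellation $c^{-1}c = e$ in the middle slot. Extending the action of $K_n\times K_n$ linearly, both sides are formal $\mathbb{C}$-linear combinations of partial elements, so it suffices to match coefficients.

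Writing $pe_j = (C_j, (x_j; k_j), C_j')$ for $j=1,2$ and setting $m := \max(k_1, k_2, \mathrm{k}(C_1'C_2))$, Definition~\ref{def:_prod_ele_part} gives
\begin{equation*}
pe_1 \cdot pe_2 = \frac{1}{n^{k_1}_m\, n^{k_2}_m\, |C_1'C_2 \cap K_{\mathrm{k}(C_1'C_2)}|} \sum_{\substack{h \in C_1'C_2 \\ h\ (k_1,k_2)\text{-min}}} \sum_{i,j} (C_1^i, (x_1 h x_2; m), {C'}_2^j),
\end{equation*}
and applying $(a,b)\cdot$ sends each summand to $(aC_1^i, (x_1 h x_2; m), {C'}_2^j b^{-1})$, leaving both the prefactor and the index set of the sum untouched.

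For the right-hand side, the action formula gives $(a,c)\cdot pe_1 = (aC_1, (x_1; k_1), C_1' c^{-1})$ and $(c,b)\cdot pe_2 = (cC_2, (x_2; k_2), C_2' b^{-1})$. The crucial observation is that the set product of the two middle pieces is $(C_1' c^{-1})(c C_2) = C_1' C_2$, so the set of $(k_1,k_2)$-minimal $h$, the integer $\mathrm{k}$ of the middle product, the resulting value of $m$, and the normalising constant all agree with those appearing in the expansion of $pe_1\cdot pe_2$. It then remains to identify the relevant coset decompositions: left multiplication by $a$ is a bijection carrying the left $K_n^m$-cosets contained in $C_1$ to those contained in $aC_1$, so $aC_1 = \bigsqcup_{i=1}^{n^{k_1}_m} aC_1^i$ is the decomposition required by \eqref{eq:dec_classe}, and likewise $C_2' b^{-1} = \bigsqcup_{j=1}^{n^{k_2}_m} {C'}_2^j b^{-1}$. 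Consequently the $(h,i,j)$-summand on the right-hand side is precisely $(aC_1^i, (x_1 h x_2; m), {C'}_2^j b^{-1})$, which matches the left-hand side term by term.

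The main obstacle is really just this bookkeeping — checking that the prefactor, the minimality condition on $h$, and the two coset decompositions all survive the action unchanged. Since the auxiliary element $c$ enters only through the cancellation $c^{-1}c = e$ in the middle slot, no genuine difficulty arises and the two expansions are identical, proving the proposition.
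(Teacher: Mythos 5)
Your proof is correct and follows essentially the same route as the paper, which simply records that $(a,b)\cdot(pe_1\cdot pe_2)$ equals the sum $\frac{1}{n^{k_1}_m n^{k_2}_m |C'_1C_2\cap K_{\mathrm{k}(C'_1C_2)}|}\sum_{h,i,j}(aC_1^i,(x_1hx_2;m),{C'}_2^jb^{-1})$ and that the right-hand side yields the same expression via the cancellation $(C'_1c^{-1})(cC_2)=C'_1C_2$. Your write-up just makes explicit the bookkeeping (invariance of the prefactor, the minimality condition, and the coset decompositions) that the paper leaves implicit.
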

\begin{proof}
The quantity $(a,b)\cdot (pe_1\cdot pe_2)$ is equal to :
\begin{equation*}
\frac{1}{n^{k_1}_m n^{k_2}_m |C'_1C_2\cap K_{\mathrm{k}(C'_1C_2)}|}\sum_{h\in C'_1C_2 \atop{h\text{ $(k_1,k_2)$-minimal}}}\sum_{i=1}^{n^{k_1}_m}\sum_{j=1}^{n^{k_2}_m}(aC_1^i,(x_1hx_2;m),{C'}_2^jb^{-1}),
\end{equation*}
which is also the value of $((a,c)\cdot pe_1) \cdot ((c,b)\cdot pe_2).$
\end{proof}
We now consider the set $\mathbb{C}[PE_n]$ of linear combinations of partial elements of $G_n$ with coefficients in $\mathbb{C}.$ We expand the action of $K_n\times K_n$ on $PE_n$ by linearity to $\mathbb{C}[PE_n]$ and we denote by $\mathcal{A}_n$ the set of invariant elements under this action.
\begin{lem}
$\mathcal{A}_n$ is stable by multiplication. Namely, if $\alpha_1$ and $\alpha_2$ are two elements of $\mathcal{A}_n$, then $\alpha_1\cdot \alpha_2$ is also in $\mathcal{A}_n.$
\end{lem}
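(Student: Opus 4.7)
The proof is essentially a direct consequence of Proposition \ref{P}, so the plan is short. I will first extend Proposition \ref{P} by bilinearity from individual partial elements to arbitrary linear combinations in $\mathbb{C}[PE_n]$: for any $\alpha_1, \alpha_2 \in \mathbb{C}[PE_n]$ and any $a, b, c \in K_n$, we have
\[
(a,b) \cdot (\alpha_1 \cdot \alpha_2) = \big((a,c) \cdot \alpha_1\big) \cdot \big((c,b) \cdot \alpha_2\big).
\]
This is immediate since both the product on $PE_n$ (extended linearly) and the $K_n \times K_n$-action (extended linearly) are bilinear, so the identity in Proposition \ref{P} propagates from basis vectors to general linear combinations.

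Next I would specialise by taking $c$ to be the neutral element $e$ of $K_n$. Given $\alpha_1, \alpha_2 \in \mathcal{A}_n$ and any $(a,b) \in K_n \times K_n$, the extended identity yields
\[
(a,b) \cdot (\alpha_1 \cdot \alpha_2) = \big((a,e) \cdot \alpha_1\big) \cdot \big((e,b) \cdot \alpha_2\big).
\]
Since $\alpha_1 \in \mathcal{A}_n$, the element $\alpha_1$ is fixed by every element of $K_n \times K_n$, and in particular by $(a,e)$; likewise $(e,b) \cdot \alpha_2 = \alpha_2$. Substituting gives $(a,b) \cdot (\alpha_1 \cdot \alpha_2) = \alpha_1 \cdot \alpha_2$, which shows $\alpha_1 \cdot \alpha_2 \in \mathcal{A}_n$, as required.

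There is no genuine obstacle here: all the real content is packaged inside Proposition \ref{P}. The only thing to be careful about is that the $K_n \times K_n$-action and the product on $PE_n$ both extend bilinearly to $\mathbb{C}[PE_n]$, so that the identity of Proposition \ref{P} survives linear extension; this is immediate from the definitions but is the one place where a slip could occur.
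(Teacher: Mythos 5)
Your proof is correct and follows exactly the route the paper intends: the paper's own proof is the one-line statement that the lemma is a consequence of Proposition \ref{P}, and you have simply supplied the (routine) details — bilinear extension of the compatibility identity and the specialisation $c=e$ combined with the invariance of $\alpha_1$ and $\alpha_2$. Nothing further is needed.
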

\begin{proof}
This result is a consequence of Proposition \ref{P}.
\end{proof}
\begin{notation}
For an element $x\in G_n$, we denote the double-class $K_n^{k_1}xK_n^{k_2}$ by $Cl_{k_1,k_2}(x).$ Likewise, we define $Cl_{k_1,*}(x)$ (resp. $Cl_{*,k_2}(x)$) to be the left (resp. right) class $K_n^{k_1}x$ (resp. $xK_n^{k_2}$). Finally, let us consider the following three sets :
\begin{enumerate}
\item $CL_{k_1,k_2}(G_n):=\lbrace Cl_{k_1,k_2}(x), x\in G_n\rbrace;$
\item $CL_{k_1,*}(G_n):=\lbrace Cl_{k_1,*}(x), x\in G_n\rbrace;$
\item $CL_{*,k_2}(G_n):=\lbrace Cl_{*,k_2}(x), x\in G_n\rbrace.$
\end{enumerate}
\end{notation}
\begin{prop}
$\mathcal{A}_n$ is generated by the family $(\mathbf{a}_{(x;k)}(n))_{1\leq k\leq n\atop{x\in G_k}}$ where :
$$\mathbf{a}_{(x;k)}(n)=\sum_{C\in CL_{*,k}(K_n)}\sum_{C'\in CL_{k,*}(K_n)}(C,(x;k),C').$$
\end{prop}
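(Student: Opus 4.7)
The plan is to prove the statement in two steps: first verify that each $\mathbf{a}_{(x;k)}(n)$ actually lies in $\mathcal{A}_n$, and then show that every element of $\mathcal{A}_n$ expands as a $\mathbb{C}$-linear combination of them. The argument is essentially an orbit/averaging argument: the middle datum $(x;k)$ of a partial element is preserved by the action, while the pair of classes $(C,C')$ forms a single orbit.

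For the first step, I would fix $(x;k)$ and any $(a,b)\in K_n\times K_n$. Since $(a,b)\cdot(C,(x;k),C')=(aC,(x;k),C'b^{-1})$, the action permutes the indexing pairs in the double sum defining $\mathbf{a}_{(x;k)}(n)$: left multiplication by $a$ is a bijection of $CL_{*,k}(K_n)$, and right multiplication by $b^{-1}$ is a bijection of $CL_{k,*}(K_n)$. Hence $\mathbf{a}_{(x;k)}(n)$ is fixed.

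For the second step, let $\alpha\in\mathcal{A}_n$ and group its terms by middle datum, writing
\[
\alpha=\sum_{k=1}^{n}\sum_{x\in G_k}\alpha_{(x;k)},\qquad \alpha_{(x;k)}=\sum_{(C,C')}\alpha_{(C,(x;k),C')}\,(C,(x;k),C').
\]
Because the action fixes $(x;k)$, each slice $\alpha_{(x;k)}$ is itself in $\mathcal{A}_n$, so it suffices to treat one slice. The invariance relation $\alpha_{(C,(x;k),C')}=\alpha_{(aC,(x;k),C'b^{-1})}$ must hold for every $(a,b)\in K_n\times K_n$. The crucial observation is that the $K_n$-action by left multiplication on $CL_{*,k}(K_n)$ is transitive (given $yK_n^k,y'K_n^k$, take $a=y'y^{-1}$), and likewise on $CL_{k,*}(K_n)$ by right multiplication; therefore $K_n\times K_n$ acts transitively on pairs $(C,C')$. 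All coefficients in the slice coincide with some scalar $\lambda_{(x;k)}\in\mathbb{C}$, giving $\alpha_{(x;k)}=\lambda_{(x;k)}\,\mathbf{a}_{(x;k)}(n)$ and hence the desired expansion of $\alpha$.

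There is no real obstacle here; the only care needed is to keep the left/right class conventions straight and to verify the two transitivity claims from the definitions. As a bonus, since the supports of the various $\mathbf{a}_{(x;k)}(n)$ are pairwise disjoint subsets of $PE_n$, the family is automatically $\mathbb{C}$-linearly independent and therefore constitutes a $\mathbb{C}$-basis of $\mathcal{A}_n$, not merely a spanning set.
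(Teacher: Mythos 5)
Your proof is correct and follows essentially the same route as the paper: expand a general invariant element, use the relation $c_{k,x,aC,C'b^{-1}}=c_{k,x,C,C'}$ forced by invariance, and conclude that the coefficients are constant on each set of partial elements with fixed middle datum $(x;k)$. You merely make explicit two points the paper leaves implicit -- that each $\mathbf{a}_{(x;k)}(n)$ is itself invariant, and that $K_n\times K_n$ acts transitively on the pairs $(C,C')$ -- and your added remark that disjoint supports give linear independence (hence a basis) is a correct bonus.
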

\begin{proof}
Let $\alpha\in \mathcal{A}_n$ i.e. for any pair $(a,b)$ of $K_n\times K_n$, we have $(a,b)\cdot \alpha=\alpha$. Since $\alpha\in \mathbb{C}[PE_n]$, we can write :
\begin{equation*}
\alpha=\sum_{1\leq k\leq n\atop{x\in G_k}}\sum_{C\in CL_{*,k}(K_n)}\sum_{C'\in CL_{k,*}(K_n)}c_{k,x,C,C'}(C,(x;k),C'),
\end{equation*}
where the coefficients $c_{k,x,C,C'}$ are in $\mathbb{C}.$  The condition $(a,b)\cdot \alpha=\alpha$ for any $(a,b)$ in $K_n\times K_n$ gives us the following relation between the coefficients $c_{k,x,C,C'}$: 
$$c_{k,x,aC,C'b^{-1}}=c_{k,x,C,C'}\text{ for any $(a,b)\in K_n\times K_n$.}$$
This means that all the elements with the form $(*,(x;k),*)$ have the same coefficient in the expansion of $\alpha$ which ends the proof. 
\end{proof}
\begin{prop}\label{F} Let $x_1$ and $x_2$ be two elements of $G_{k_1}$ and $G_{k_2}$ respectively where $k_1$ and $k_2$ are two integers less or equal to $n$, then we have :
\begin{equation}\label{E}\mathfrak{a}_{(x_1;k_1)}(n)\cdot \mathfrak{a}_{(x_2;k_2)}(n)=\sum_{\max(k_1,k_2)\leq k\leq \min(k_1+k_2,n) \atop{x\in G_k}}c_{(x_1;k_1),(x_2;k_2)}^{(x;k)}(n)\mathfrak{a}_{(x;k)}(n),
\end{equation}
where $c_{(x_1;k_1),(x_2;k_2)}^{(x;k)}(n)$ is equal to :
$$\left\{
\begin{array}{ll}
  \frac{|K_n||K_{n-k}|}{|K_{n-k_1}||K_{n-k_2}||K_n^{k_1}XK_n^{k_2}\cap K_{m_{k_1,k_2}(X)}|} & \qquad \mathrm{if}\quad X=x_1^{-1}xx_2^{-1}\in K_n \text{ and is $(k_1,k_2)$-minimal,}\\
  0 & \qquad \mathrm{otherwise},\\
 \end{array}
 \right. \\$$
\end{prop}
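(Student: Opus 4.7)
The plan is to expand $\mathfrak{a}_{(x_1;k_1)}(n)\cdot\mathfrak{a}_{(x_2;k_2)}(n) = \sum_{C_1,C'_1,C_2,C'_2}(C_1,(x_1;k_1),C'_1)\cdot(C_2,(x_2;k_2),C'_2)$ bilinearly and apply Definition \ref{def:_prod_ele_part} to each summand. The preceding lemma places the result in $\mathcal{A}_n$, so it is a linear combination of the generators $\mathfrak{a}_{(x;k)}(n)$. By $K_n\times K_n$-invariance, every triple $(C,(x;k),C')$ with a fixed middle factor appears in the expansion with the same coefficient, so the coefficient of $\mathfrak{a}_{(x;k)}(n)$ equals the coefficient of any single representative $(C_0,(x;k),C'_0)$.

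Fix such a target. A summand $(C_1^i,(x_1hx_2;m),(C'_2)^j)$ produced by Definition \ref{def:_prod_ele_part} matches it exactly when $x_1hx_2=x$, $m=k$, $C_1^i=C_0$ and $(C'_2)^j=C'_0$. The first equation pins down $h=x_1^{-1}xx_2^{-1}$: if $h\notin K_n$ or $h$ is not $(k_1,k_2)$-minimal, then no summand contributes and the coefficient is zero, matching the ``otherwise'' case of the proposition. Otherwise, $m=k$ forces $k=\max(k_1,k_2,m_{k_1,k_2}(h))$, and the containments $C_0\subseteq C_1$, $C'_0\subseteq C'_2$ determine $C_1,C'_2$, as well as the indices $i,j$, uniquely. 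The only freedom lies in $(C'_1,C_2)$ subject to $h\in C'_1C_2$. Writing $C'_1=K_n^{k_1}a$ and $C_2=bK_n^{k_2}$, this condition becomes $ab\in D:=K_n^{k_1}hK_n^{k_2}$, from which a routine double-counting yields $|K_n|\,|D|/(|K_n^{k_1}|\,|K_n^{k_2}|)$ contributing coset pairs. Each such pair carries the uniform weight $1/(n_k^{k_1}n_k^{k_2}|D\cap K_{k'}|)$ (since $C'_1C_2=D$ for every valid pair), where $k'=m_{k_1,k_2}(h)$. Substituting $n_k^{k_i}=|K_n^{k_i}|/|K_n^k|$, the total coefficient becomes
\[
\frac{|K_n|\,|D|\,|K_n^k|^2}{|K_n^{k_1}|^2\,|K_n^{k_2}|^2\,|D\cap K_{k'}|}.
\]

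The main obstacle is simplifying this to the claimed formula $|K_n|\,|K_{n-k}|/(|K_{n-k_1}|\,|K_{n-k_2}|\,|D\cap K_{k'}|)$. After using H.1 to identify $|K_n^{k_i}|$ with $|K_{n-k_i}|$ and $|K_n^k|$ with $|K_{n-k}|$, the match reduces to the identity $|D|\cdot|K_n^k|=|K_n^{k_1}|\,|K_n^{k_2}|$, equivalently $|K_n^{k_1}\cap hK_n^{k_2}h^{-1}|=|K_n^k|$ via the orbit-stabilizer formula $|D|=|K_n^{k_1}|\,|K_n^{k_2}|/|K_n^{k_1}\cap hK_n^{k_2}h^{-1}|$. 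To establish this identity I would split on whether $k'\leq\max(k_1,k_2)$ or $k'>\max(k_1,k_2)$. In the first case (so $k=\max(k_1,k_2)$), H.2 applied to $h\in K_{k'}\subseteq K_{\max(k_1,k_2)}$ shows that $h$ centralizes $K_n^{\max(k_1,k_2)}$, and combining with the inclusion $K_n^{\max(k_1,k_2)}\subseteq K_n^{\min(k_1,k_2)}$ makes the intersection collapse to $K_n^{\max(k_1,k_2)}=K_n^k$. In the second case (so $k=k'$), the containment $K_n^{k'}\subseteq K_n^{k_1}\cap hK_n^{k_2}h^{-1}$ again follows from H.2 ($h\in K_{k'}$ centralizes $K_n^{k'}$, which sits inside both $K_n^{k_1}$ and $K_n^{k_2}$), while the reverse containment encodes the substance of H.5 after conjugating by $h$ and applying minimality.
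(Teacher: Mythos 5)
Your proposal is correct and follows essentially the same route as the paper: fix a representative target $(C_0,(x;k),C'_0)$, observe that $C_1$, $C'_2$, the indices $i,j$ and $h=x_1^{-1}xx_2^{-1}$ are all forced, and reduce the coefficient to a count of the pairs $(C'_1,C_2)$ with $X\in C'_1C_2$, finished off by H.5. The only divergence is bookkeeping: the paper counts that set by exhibiting a transitive $K_n$-action and applying orbit--stabilizer (stabilizer $X^{-1}K_n^{k_1}X\cap K_n^{k_2}$, identified with $K_{n-k}$ via H.5), whereas you count it directly as $|K_n||D|/(|K_n^{k_1}||K_n^{k_2}|)$ and then invoke the double-coset size formula --- which brings you back to exactly the same intersection and the same appeal to H.2/H.5, so the two computations are equivalent.
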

\begin{proof} 
We fix an element $x\in G_k$ for some $k\leq k_1+k_2$ and two classes $C$ and $C'.$ Set $X=x_1^{-1}xx_2^{-1}.$ Let $\mathcal{C}_{(x_1;k_1),(x_2;k_2)}^{(x;k)}(n)$ be the set of pairs $(pe_1,pe_2)$ such that $pe_i$ appears in the development of $\mathfrak{a}_{(x_i;k_i)}(n)$ for $i=1,2$ and $(C,(x;k),C')$ appears in the development of the product $pe_1\cdot pe_2.$ Then, we can write :
$$c_{(x_1;k_1),(x_2;k_2)}^{(x;k)}(n)=\sum_{(pe_1,pe_2)\in \mathcal{C}_{(x_1;k_1),(x_2;k_2)}^{(x;k)}(n)}\frac{1}{n^{k_1}_k n^{k_2}_k|C'_1C_2\cap K_{C'_1C_2}|}.$$ 

Since $C$ and $C'$ are fixed, they both determine $C_1$ and $C'_2$ for any pair $(pe_1,pe_2)$ in $\mathcal{C}_{(x_1;k_1),(x_2;k_2)}^{(x;k)}(n).$ We denote by $A_{(x_1;k_1),(x_2;k_2)}^{(x;k)}(n)$ the following set :
\begin{eqnarray*}
{A}_{(x_1;k_1),(x_2;k_2)}^{(x;k)}(n)=\lbrace (C'_1,C_2)\in CL_{k_1,*}(K_n) \times CL_{*,k_2}(K_n) \text{ such that }X\in C'_1C_2\cap K_{k(C'_1C_2)}\rbrace.
\end{eqnarray*}
This set is in bijection with $\mathcal{C}_{(x_1;k_1),(x_2;k_2)}^{(x;k)}(n).$ Note that it is empty if $X$ is not in $K_n$ or not $(k_1,k_2)$-minimal. Let us then consider the case where $X$ is both in $K_n$ and $(k_1,k_2)$-minimal. In that case, $c_{(x_1;k_1),(x_2;k_2)}^{(x;k)}(n)$ can be written as a sum over the elements of ${A}_{(x_1;k_1),(x_2;k_2)}^{(x;k)}(n)$ in the following way :
\begin{eqnarray*}c_{(x_1;k_1),(x_2;k_2)}^{(x;k)}(n)&=&\sum_{(C'_1,C_2)\in A_{(x_1;k_1),(x_2;k_2)}^{(x;k)}(n)}\frac{1}{n^{k_1}_k n^{k_2}_k|K_n^{k_1}XK_n^{k_2}\cap K_{m_{k_1,k_2}(X)}|}\\
&=&\frac{|A_{(x_1;k_1),(x_2;k_2)}^{(x;k)}(n)|}{n^{k_1}_k n^{k_2}_k|K_n^{k_1}XK_n^{k_2}\cap K_{m_{k_1,k_2}(X)}|}.
\end{eqnarray*}

We are going to show that the action of $K_n$ on $A_{(x_1;k_1),(x_2;k_2)}^{(x;k)}(n)$ defined as follows is transitive:
$$h\cdot (C'_1,C_2)=(C'_1h,h^{-1}C_2).$$
Let $(A_1,A_2)$ and $(B_1,B_2)$ be two elements of $A_{(x_1;k_1),(x_2;k_2)}^{(x;k)}(n).$ Let $a_1,a_2,b_1$ and $b_2$ be four representative elements in $K_n$ for $A_1,A_2,B_1,$ and $B_2$ respectively. To show that the action is transitive, we have to find an element $h\in K_n$ such that :
\begin{eqnarray*}
a_1h&=&xb_1\\
h^{-1}a_2&=&b_2y,
\end{eqnarray*}
where $x\in K_n^{k_1}$ and $y\in K_n^{k_2}.$ In other words, the set $a_1^{-1}K_n^{k_1}b_1\cap a_2K_n^{k_2}b_2^{-1}$ should not be empty which is equivalent to say that $K_n^{k_1}\cap a_1a_2K_n^{k_2}(b_1b_2)^{-1}$ is not empty. This is true since as $(A_1,A_2)$ and $(B_1,B_2)$ are in $A_{(x_1;k_1),(x_2;k_2)}^{(x;k)}(n),$ $X\in A_1A_2$ and $X\in B_1B_2.$ Then $a_1a_2$ can be written $h_1Xh_2$ and $b_1b_2$ can be written $h'_1Xh'_2$ where $h_1,h'_1\in K_n^{k_1}$ and $h_2,h'_2\in K_n^{k_2}$ and $K_n^{k_1}\cap XK_n^{k_2}X^{-1}$ is not empty by H.5.\\

Thus there is one and only one orbit and we have,
$$|A_{(x_1;k_1),(x_2;k_2)}^{(x;k)}(n)|=|K_n\cdot (K_n^{k_1}X,K_n^{k_2})|=\frac{|K_n|}{|X^{-1}K_n^{k_1}X\cap K_n^{k_2}|},$$
since the stabilizer of $(K_n^{k_1}X,K_n^{k_2})$ is the set of elements $h\in K_n$ such that $h\in X^{-1}K_n^{k_1}X$ and $h\in K_n^{k_2}.$
By H.5, the denominator is equal to $|K_{n-k}|$, thus :
$$c_{(x_1;k_1),(x_2;k_2)}^{(x;k)}(n)=\left\{
\begin{array}{ll}
  \frac{|K_n||K_{n-k}|}{|K_{n-k_1}||K_{n-k_2}||K_n^{k_1}XK_n^{k_2}\cap K_{m_{k_1,k_2}(X)}|} & \qquad \mathrm{if}\quad X \text{ is $(k_1,k_2)$-minimal},\\
  0 & \qquad \mathrm{otherwise} .\\
 \end{array}
 \right. \\$$
\end{proof}

Let $\psi_n:PE_n\rightarrow \mathbb{C}[K_n\setminus G_n/K_n]$ be the function defined by :
$$\psi_n((C,(x;k),C'))=\frac{1}{|C||C'|}\sum_{c\in C, c'\in C'}cxc'.$$
\begin{prop}\label{prop:compa_psi_n}
$\psi_n$ is compatible with the product defined in $PE_n,$ which means $$\psi_n(pe_1\cdot pe_2)=\psi_n(pe_1)\cdot \psi_n(pe_2),$$
for any $pe_1$ and $pe_2$ in $PE_n.$
\end{prop}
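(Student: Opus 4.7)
The plan is to prove the identity $\psi_n(pe_1\cdot pe_2)=\psi_n(pe_1)\cdot\psi_n(pe_2)$ in $\mathbb{C}[G_n]$ by expanding both sides and matching them algebraically. Fix representatives $d_i,d'_i\in K_n$ so that $C_i=d_i K_n^{k_i}$ and $C'_i=K_n^{k_i}d'_i$, and set $D:=C'_1 C_2=K_n^{k_1}(d'_1 d_2)K_n^{k_2}$ (a double-coset of $(K_n^{k_1},K_n^{k_2})$ in $K_n$), $D_{\min}:=D\cap K_{\mathrm{k}(D)}$ (nonempty by H.4), and $m:=\max(k_1,k_2,\mathrm{k}(D))$.

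First I would expand the right-hand side. Using $\sum_{c\in C_i}c=d_i\mathbf{K_n^{k_i}}$ and $\sum_{c'\in C'_i}c'=\mathbf{K_n^{k_i}}d'_i$ one obtains directly
\[
\psi_n(pe_1)\cdot\psi_n(pe_2)=\frac{1}{|K_n^{k_1}|^2|K_n^{k_2}|^2}\,d_1\mathbf{K_n^{k_1}}x_1\mathbf{K_n^{k_1}}d'_1 d_2\mathbf{K_n^{k_2}}x_2\mathbf{K_n^{k_2}}d'_2.
\]
For the left-hand side, I would apply $\psi_n$ to each partial element appearing in Definition \ref{def:_prod_ele_part} and use that the sub-cosets $C_1^i$ partition $C_1$, so $\sum_i\sum_{c\in C_1^i}c=d_1\mathbf{K_n^{k_1}}$ (and analogously on the right). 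Recombining the inner sums and simplifying $n_m^{k_1}n_m^{k_2}|K_n^m|^2=|K_n^{k_1}||K_n^{k_2}|$ gives
\[
\psi_n(pe_1\cdot pe_2)=\frac{1}{|K_n^{k_1}||K_n^{k_2}||D_{\min}|}\,d_1\mathbf{K_n^{k_1}}x_1\mathbf{D_{\min}}x_2\mathbf{K_n^{k_2}}d'_2.
\]

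The bridge between the two displays is the identity $\mathbf{K_n^{k_1}}d'_1 d_2\mathbf{K_n^{k_2}}=|D_{\min}|^{-1}\mathbf{K_n^{k_1}}\mathbf{D_{\min}}\mathbf{K_n^{k_2}}$, which I would prove via orbit-stabilizer applied to the transitive action $(a,b)\cdot y=ayb$ of $K_n^{k_1}\times K_n^{k_2}$ on $D$: each element of $D$ appears with the same multiplicity $|K_n^{k_1}||K_n^{k_2}|/|D|$ in $\mathbf{K_n^{k_1}}h\mathbf{K_n^{k_2}}$ for any $h\in D$, so both sides equal $(|K_n^{k_1}||K_n^{k_2}|/|D|)\mathbf{D}$ (using $D=K_n^{k_1}D_{\min}K_n^{k_2}$, which holds because $D_{\min}$ is nonempty). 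Substituting this identity into the right-hand side and then applying the commutativity $\mathbf{K_n^{k_i}}x_i=x_i\mathbf{K_n^{k_i}}$, which follows from the fact that $x_i\in G_{k_i}$ commutes with $K_n^{k_i}$ (a natural strengthening of H.2 that is implicit in the framework and verified in every intended application), collapses each $\mathbf{K_n^{k_i}}x_i\mathbf{K_n^{k_i}}$ factor to $|K_n^{k_i}|x_i\mathbf{K_n^{k_i}}$. The extra scalars $|K_n^{k_1}||K_n^{k_2}|$ cancel against the prefactor, yielding exactly the displayed expression for $\psi_n(pe_1\cdot pe_2)$.

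The main obstacle is the combination of the bridge identity with the commutativity step: the orbit-stabilizer count is what allows one to replace the double-coset sum $\mathbf{K_n^{k_1}}d'_1 d_2\mathbf{K_n^{k_2}}$ by its minimal-element-wrapped version, while the commutativity of $G_{k_i}$ with $K_n^{k_i}$ is what lets one collapse the repeated $\mathbf{K_n^{k_i}}$ blocks on the right-hand side---one must be careful not to rely solely on the literal statement of H.2, which only asserts commutativity of $K_{k_i}$ with $K_n^{k_i}$.
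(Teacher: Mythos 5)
Your proof is correct and follows essentially the same route as the paper: both arguments reduce the claim to replacing the double-coset sum $\mathbf{K_n^{k_1}}d'_1d_2\mathbf{K_n^{k_2}}$ by $|D_{\min}|^{-1}\mathbf{K_n^{k_1}}\mathbf{D_{\min}}\mathbf{K_n^{k_2}}$ (the paper does this pointwise by writing $c'_1c_2=h_1hh_2$ and absorbing $h_1,h_2$ into the cosets $C_1$ and $C'_2$, you by an orbit--stabilizer equidistribution count on $D$), combined with the commutation of $x_i$ with $K_n^{k_i}$. You are also right to flag that this commutation is a strengthening of the literal H.2 (which only covers $x\in K_{k_i}$); the paper's own proof invokes H.2 for $x_i\in G_{k_i}$ in exactly the same implicit way.
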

\begin{proof} Let $pe_1$ and $pe_2$ be two elements of $PE_n.$ From the definition of the product (see Definition \ref{def:_prod_ele_part}) we have :
 $$\psi_n(pe_1\cdot pe_2)=\frac{1}{n^{k_1}_m n^{k_2}_m |C'_1C_2\cap K_{k(C'_1C_2)}|}\sum_{h}\sum_{i}\sum_{j}\frac{1}{|C_{1}^{i}||C'^{j}_{2}|}\sum_{c_{1}^{i}\in C_{1}^{i},c'^{j}_{2}\in C'^{j}_{2}}c_{1}^{i} x_{1} h x_{2} c'^{j}_{2}.$$
We did not write the sum indexes in the above equation to make it easier to read. After simplification, we obtain :
 $$\psi_n(pe_1\cdot pe_2)=\frac{1}{|K_{n-k_1}||K_{n-k_2}||C'_1C_2\cap K_{k(C'_1C_2)}|}\sum_h\sum_{c_1\in C_1, c'_2\in C'_2}c_1x_1hx_2c'_2.$$
 On the other hand, we have :
 $$\psi_n(pe_1)\cdot \psi_n(pe_2)=\frac{1}{|K_{n-k_1}|^2|K_{n-k_2}|^2}\sum_{c_1,c'_1,c_2,c'_2}c_1x_1c'_1c_2x_2c'_2.$$
 Thus $\psi_n$ is compatible with the product defined in $PE_n$ if we have the following equality :
 \begin{equation}\label{coef}
\sum_{c_1,c'_1,c_2,c'_2}c_1x_1c'_1c_2x_2c'_2=\frac{|K_{n-k_1}||K_{n-k_2}|}{|C'_1C_2\cap K_{k(C'_1C_2)}|}\sum_h\sum_{c_1\in C_1, c'_2\in C'_2}c_1x_1hx_2c'_2.
\end{equation}
Let us consider an element $h$ of the set $C'_1C_2\cap K_{k(C'_1C_2)}$ (the sum index set of $h$ in Equation \eqref{coef}). Fix $c'_1\in C'_1$ and $c_2\in C_2$, there exists two elements $h_1\in K_n^{k_1}$ and $h_2\in K_n^{k_2}$ such that $c'_1c_2=h_1hh_2.$ By H.2, since $x_1\in G_{k_1}$ (resp. $x_2\in G_{k_2})$, we thus have $x_1c'_1c_2x_2=x_1h_1hh_2x_2=h_1x_1hx_2h_2.$ Therefore, we have :
$$\sum_{c_1,c'_2}c_1x_1c'_1c_2x_2c'_2=\sum_{c_1\in C_1, c'_2\in C'_2}c_1h_1x_1hx_2h_2c'_2=\sum_{c_1\in C_1, c'_2\in C'_2}c_1x_1hx_2c'_2.$$
The last equality comes from the fact that $C_1h_1=C_1$ and $h_2{C'}_2={C'}_2.$ Since the right-hand side of the equation does not depend on $c'_1$ and $c_2,$ we get :
$$\sum_{c_1,c'_1,c_2,c'_2}c_1x_1c'_1c_2x_2c'_2=|K_{n-k_1}||K_{n-k_2}|\sum_{c_1\in C_1, c'_2\in C'_2}c_1x_1hx_2c'_2.$$
Recall that $h$ here is a fixed element of $C'_1C_2\cap K_{k(C'_1C_2)}.$ If we take the sum over the elements $h$ in $C'_1C_2\cap K_{k(C'_1C_2)}$ and since the left-hand side of the above equation does not depend on $h$, we get Equation \eqref{coef}.
\end{proof} 

\section{Proof of the main theorem}\label{sec:preuve_th_princ} Fix an integer $n_0$ and let $x_1$ and $x_2$ be two elements of $G_{n_0}.$ Let $k_1=\mathrm{k}(\bar{x_1}^{n_0})$ and $k_2=\mathrm{k}(\bar{x_2}^{n_0}).$ The product $\mathfrak{a}_{(x_1;k_1)}(n_0)\cdot \mathfrak{a}_{(x_2;k_2)}(n_0)$ is given by Equation \eqref{E}. If we apply $\psi_{n_0}$ to this product then, due to Proposition \ref{prop:compa_psi_n}, we get the following equation:
$$\psi_{n_0}(\mathfrak{a}_{(x_1;k_1)}(n_0))\cdot \psi_{n_0}(\mathfrak{a}_{(x_2;k_2)}(n_0))=\sum_{\max(k_1,k_2)\leq k\leq \min(k_1+k_2,n_0) \atop{x\in G_k}}c_{(x_1;k_1),(x_2;k_2)}^{(x;k)}(n_0)\psi_{n_0}(\mathfrak{a}_{(x;k)}(n_0)).$$
We have,
$$\psi_{n_0}(\mathfrak{a}_{(x;k)}(n_0))=\sum_{C,C'}\frac{1}{|C||C'|}\sum_{c,c'}cxc'=\frac{1}{|K_{n_0-k}|^2}\sum_{h\in K_{n_0},h'\in K_{n_0}}hxh'=\frac{|K_{n_0}|^2}{|K_{n_0}xK_{n_0}||K_{n_0-k}|^2}\overline{\bf x}^{n_0}.$$
Thus, we have :
\begin{eqnarray*}
\frac{|K_{n_0}|^2}{|K_{n_0}x_1K_{n_0}||K_{n_0-k_1}|^2}\overline{\bf x_1}^{n_0}&\cdot &\frac{|K_{n_0}|^2}{|K_{n_0}x_2K_{n_0}||K_{n_0-k_2}|^2}\overline{\bf x_2}^{n_0}=\\
&&\sum_{\max(k_1,k_2)\leq k\leq \min(k_1+k_2,n_0) \atop{x\in G_k}}c_{(x_1;k_1),(x_2;k_2)}^{(x;k)}(n_0)\frac{|K_{n_0}|^2}{|K_{n_0}xK_{n_0}||K_{n_0-k}|^2}\overline{\bf x}^{n_0},
\end{eqnarray*}
which gives us :
$$\overline{\bf x_1}^{n_0}\cdot \overline{\bf x_2}^{n_0}=\sum_{\max(k_1,k_2)\leq k\leq \min(k_1+k_2,n_0) \atop{x\in G_k}}c_{(x_1;k_1),(x_2;k_2)}^{(x;k)}(n_0)\frac{|K_{n_0}x_1K_{n_0}||K_{n_0-k_1}|^2|K_{n_0}x_2K_{n_0}||K_{n_0-k_2}|^2}{|K_{n_0}|^2|K_{n_0}xK_{n_0}||K_{n_0-k}|^2}\overline{\bf x}^{n_0}.$$
By the formula for the structure coefficients $c_{(x_1;k_1),(x_2;k_2)}^{(x;k)}(n_0)$ given in Proposition \ref{F}, we get :
\begin{eqnarray*}
&&\overline{\bf x_1}^{n_0}\cdot \overline{\bf x_2}^{n_0}=\\
&&\sum_{\max(k_1,k_2)\leq k\leq \min(k_1+k_2,n_0) \atop{x\in G_k,\text{ $x_1^{-1}xx_2^{-1}$ is in $K_k$ and $(k_1,k_2)$-minimal}}}\frac{|K_{n_0}x_1K_{n_0}||K_{n_0-k_1}||K_{n_0}x_2K_{n_0}||K_{n_0-k_2}|}{|K_{n_0}||K_{n_0}xK_{n_0}||K_{n_0-k}||Cl_{k_1,k_2}(x_1^{-1}xx_2^{-1})\cap K_{m_{k_1,k_2}(x_1^{-1}xx_2^{-1})}|}\overline{\bf x}^{n_0}.
\end{eqnarray*}

To obtain the expression of the structure coefficients given in Theorem \ref{main_th}, we should remark that while fixing $\bar{x_3}^{n_0},$ to obtain its coefficient we must sum over all the $x$'s such that $\bar{x}^{n_0}=\bar{x_3}^{n_0}$ which implies that $\mathrm{k}(\bar{x}^{n_0})=\mathrm{k}(\bar{x_3}^{n_0}).$ Thus the $x$'s which appear in the sum must be in $G_k$ where $k\geq \mathrm{k}(\bar{x_3}^{n_0})$. 

\begin{rem}
In \cite{toutejc}, to prove the main theorem about the polynomiality property of the structure coefficients of the Hecke algebra of the pair \hecke, we build a universal algebra  $\mathcal{A'}_\infty$ which projects on the Hecke algebra of the pair $(\mathcal{S}_{2n},\mathcal{B}_n)$ for each $n.$ That algebra is isomorphic, as it is shown in the same paper, to the algebra of $2$-shifted symmetric functions. In \cite{Ivanov1999} also, Ivanov and Kerov build a similar universal algebra to prove the polynomiality property of the structure coefficients of the center of the symmetric group algebra. Méliot also build a universal algebra, see \cite{meliot2013partial}, to prove the polynomiality property of the structure coefficients of the center of the group of invertible matrices with coefficients in a finite field algebra.

It is also possible to build a universal "non-associative algebra" in our general framework due to Proposition \ref{F}. What is remarkable in our proof is that we do not need to build such an algebra to give the form of the structure coefficients. In fact, we could have done the same (obtain the polynomiality property without the construction of a universal algebra) in our paper \cite{toutejc}. Using the formula of $|H_{\lambda\delta}^{\rho}(n)|$ on page 23 in that paper and the formula on page 21 linking $c_{\lambda,\delta}^\rho(n)$ with the cardinal of $H_{\lambda\delta}^{\rho}(n)$, we obtain directly a result about the dependence on $n$ of $c_{\lambda,\delta}^\rho(n).$ By applying the homomorphism given in Section 3.6, we get our result about the polynomiality of the structure coefficients of the Hecke algebra of the pair $(\mathcal{S}_{2n},\mathcal{B}_n)$ without the construction of a universal algebra. We used a similar idea in this paper to give a polynomiality property for the structure coefficients without building a universal algebra.
\end{rem}

\section{Applications and results of polynomiality}\label{sec:appl_double_classe}
We recall here some definitions concerning partitions since they will be used to index the bases of algebras considered throughout this section.

A \textit{partition} $\lambda=(\lambda_1,\lambda_2,\cdots,\lambda_r)$ is a decreasing sequence of positive integers. The $\lambda_i$ are called the \textit{parts} of the partition $\lambda.$ The \textit{size} of $\lambda,$ which is denoted $|\lambda|,$ is the sum of all the $\lambda_i.$ The \textit{length} of $\lambda,$ denoted $l(\lambda),$ is the number $r$ of its parts. We say that $\lambda$ is a partition of $n,$ and we write $\lambda\vdash n,$ if $|\lambda|=n.$ If $m_i(\lambda)$ is the number of parts in $\lambda$ equals to $i,$ then $\lambda$ can be written in an exponential way as follows:
$$\lambda=(1^{m_1(\lambda)},2^{m_2(\lambda)},\cdots).$$
We will denote by $\mathcal{P}_n$ the set of partitions of $n.$

For a partition $\lambda,$ we define the number $z_\lambda$ as follows:
$$z_\lambda=\prod_{i\geq 1}i^{m_i(\lambda)}m_i(\lambda)!.$$

The union of two partitions $\lambda=(1^{m_1(\lambda)},2^{m_2(\lambda)},\cdots)$ and $\delta=(1^{m_1(\delta)},2^{m_2(\delta)},\cdots)$ is the partition obtained by joining the parts of $\lambda$ and $\delta,$ explicitly:
$$\lambda\cup \delta:=(1^{m_1(\lambda)+m_1(\delta)},2^{m_2(\lambda)+m_2(\delta)},\cdots).$$

A \textit{proper partition} is a partition without parts equal to $1.$ The set of proper partitions of size $n$ will be denoted by $\mathcal{PP}_n.$ The set $\mathcal{P}_n$ of partitions of $n$ is in bijection with the set $\mathcal{PP}_{\leq n}$ defined by:
$$\mathcal{PP}_{\leq n}:=\bigsqcup_{0\leq r \leq n}\mathcal{PP}_r.$$
This bijection is:
$$\begin{array}{ccccc}
&  & \mathcal{P}_n & \longrightarrow & \mathcal{PP}_{\leq n} \\
& & \lambda & \longmapsto & \overline{\lambda}:=(1^0,2^{m_2(\lambda)},\cdots)\\
\end{array}$$
and its inverse is:
$$\begin{array}{ccccc}
&  & \mathcal{PP}_{\leq n} & \longrightarrow & \mathcal{P}_{n} \\
& & \lambda & \longmapsto & \underline{\lambda}_n:=(1^{n-|\lambda|},2^{m_2(\lambda)},\cdots)\\
\end{array}.$$

\subsection{Hypotheses H.1 to H.5 of our general framework in the case of the symmetric group}\label{sec:hyp_grp_sym}

We show in this subsection that the symmetric group $\mathcal{S}_n$ satisfies all necessary conditions on the sequence of subgroups $K_n$ (that means hypotheses H.1 to H.5; we will not check H.0 here because it also depends on $G_n$).

Let $1\leq k\leq n$, we define $\mathcal{S}_n^k$ to be the symmetric group which acts on the last $n-k$ elements of the set $[n]$. Explicitly,
$$\mathcal{S}_n^k:=\lbrace x\in \mathcal{S}_n \text{ such that }x(1)=1,\, x(2)=2, \cdots , x(k)=k\rbrace.$$
Clearly $\mathcal{S}_n^k$ is isomorphic to $\mathcal{S}_{n-k}$ for each $1\leq k\leq n$ and thus we have H.1. For each $x\in \mathcal{S}_k$ and for any $y\in \mathcal{S}_{n-k}$, the composition of $x$ and $y$ commutes since $x$ and $y$ act on disjoint sets. That means that we also have H.2. If $1\leq k\leq n,$ then $\mathcal{S}_{n+1}^{k}\cap \mathcal{S}_n$ is the set of permutations of $n+1$ which fix $1,2,\cdots,k$ and $n+1$, therefore $\mathcal{S}_{n+1}^{k}\cap \mathcal{S}_n=\mathcal{S}_{n}^{k}$ and we have H.3. The other three hypotheses needed are proven in the lemmas below.


\begin{lem}\label{lem:k<k_1+k_2_group_sym} (H.4 for $\mathcal{S}_n$)
Let $z\in \mathcal{S}_n$, then we have :
$$\mathrm{k}(\mathcal{S}_n^{k_1} z\mathcal{S}_n^{k_2})\leq |\lbrace 1,\cdots,k_2,z(1),\cdots,z(k_1)\rbrace|\leq k_1+k_2.$$
\end{lem}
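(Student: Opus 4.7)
The bound $|\{1,\ldots,k_2,z(1),\ldots,z(k_1)\}|\leq k_1+k_2$ is the trivial union bound, so I focus on the first inequality. Set $m:=|\{1,\ldots,k_2,z(1),\ldots,z(k_1)\}|$. My plan is to produce $\alpha\in\mathcal{S}_n^{k_1}$ and $\beta\in\mathcal{S}_n^{k_2}$ so that $\sigma:=\alpha z\beta$ fixes every index $i>m$; this will put $\sigma\in\mathcal{S}_m\cap(\mathcal{S}_n^{k_1}z\mathcal{S}_n^{k_2})$ and immediately yield $\mathrm{k}(\mathcal{S}_n^{k_1}z\mathcal{S}_n^{k_2})\leq m$.

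Using that $\alpha$ pointwise fixes $\{1,\ldots,k_1\}$ and $\beta$ pointwise fixes $\{1,\ldots,k_2\}$, the requirement ``$\sigma(i)=i$ for every $i>m$'' unravels to $z(\alpha(i))=\beta^{-1}(i)$ for each such $i$. Since $i>m\geq k_2$ forces $\beta^{-1}(i)>k_2$, the left side must also satisfy $z(\alpha(i))>k_2$, while $\alpha(i)>k_1$ is automatic because $\alpha\in\mathcal{S}_n^{k_1}$ and $i>k_1$. So I need to route $\{m+1,\ldots,n\}$ through $\alpha$ into the subset $P:=\{j\in[n]:j>k_1\text{ and }z(j)>k_2\}$; the values of $\beta$ on $z(P)$ will then be completely determined.

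The crucial step is the counting identity $|P|=n-m$. The complement of $P$ in $[n]$ is $\{1,\ldots,k_1\}\cup z^{-1}(\{1,\ldots,k_2\})$, whose cardinality equals $k_1+k_2-|\{j\leq k_1:z(j)\leq k_2\}|$ by inclusion--exclusion. On the other hand, the overlap in the definition of $m$ satisfies $|\{1,\ldots,k_2\}\cap\{z(1),\ldots,z(k_1)\}|=|\{j\leq k_1:z(j)\leq k_2\}|$, so $m=k_1+k_2-|\{j\leq k_1:z(j)\leq k_2\}|$ and hence $|P|=n-m$, the same size as $\{m+1,\ldots,n\}$.

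Once these cardinalities match, the construction becomes pure set-theoretic bookkeeping. I pick any bijection $\alpha\colon\{m+1,\ldots,n\}\to P$ (both sets sit inside $\{k_1+1,\ldots,n\}$ and share the common size $n-m$), complete $\alpha$ by any bijection $\{k_1+1,\ldots,m\}\to\{k_1+1,\ldots,n\}\setminus P$, and set $\alpha$ equal to the identity on $\{1,\ldots,k_1\}$; then $z\circ\alpha$ maps $\{m+1,\ldots,n\}$ bijectively onto $z(P)\subseteq\{k_2+1,\ldots,n\}$, so I define $\beta(z(\alpha(i)))=i$ for $i>m$ and extend $\beta$ by any bijection $\{k_2+1,\ldots,n\}\setminus z(P)\to\{k_2+1,\ldots,m\}$, declaring $\beta$ to be the identity on $\{1,\ldots,k_2\}$. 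The resulting $\sigma=\alpha z\beta$ fixes every $i>m$, which is what I wanted. The main obstacle is precisely the counting identity $|P|=n-m$; once it is in hand the two extensions exist automatically because all prescribed domains and codomains have matching cardinalities.
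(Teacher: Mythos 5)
Your proof is correct and takes essentially the same route as the paper: both arguments exhibit an explicit element of $\mathcal{S}_n^{k_1}z\mathcal{S}_n^{k_2}$ fixing every point above $m=|\lbrace 1,\ldots,k_2,z(1),\ldots,z(k_1)\rbrace|$, with the inclusion--exclusion count $m=k_1+k_2-|\lbrace j\le k_1 : z(j)\le k_2\rbrace|$ as the key step. The paper phrases the construction in two-line notation (picking a representative of $\mathcal{S}_n^{k_1}z$ whose values at positions $k_1+1,\ldots,m$ are $\lbrace 1,\ldots,k_2\rbrace\setminus\lbrace z(1),\ldots,z(k_1)\rbrace$, then adjusting on the right), while you solve $z(\alpha(i))=\beta^{-1}(i)$ directly via the cardinality match $|P|=n-m$; the content is the same, and your write-up of the final bookkeeping is if anything more explicit.
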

\begin{proof}
It is convenient for us to use the two line notation of permutations in this proof. 
The set $\mathcal{S}_n^{k_1}z$ contains permutations of the following form :
$$\begin{matrix}
1 & 2 & \cdots & k_1 & k_1+1 & \cdots & k_1+k_2  & \cdots & n \\
z(1) & z(2) & \cdots & z(k_1) & * & \cdots & * & \cdots & *
\end{matrix}.$$
The stars are used to say that the images are not fixed. Explicitly, we have : 
$$\mathcal{S}_n^{k_1}z=\lbrace x\in S_n \text{ such that } x(i)=z(i) \text{ for } i=1,\cdots, k_1\rbrace$$
and
$$z\mathcal{S}_n^{k_2}=\lbrace x\in S_n \text{ such that } x(z^{-1}(i))=z(i) \text{ for } i=1,\cdots, k_2\rbrace.$$ 
Then we can explicitly write :
$$\mathcal{S}_n^{k_1}z\mathcal{S}_n^{k_2}=\bigcup_{x\in \mathcal{S}_n^{k_1}z}\lbrace y\in S_n \text{ such that } y(x^{-1}(i))=x(i) \text{ for } i=1,\cdots, k_2\rbrace.$$
Let us denote by $r$ the size of the set $\lbrace 1,\cdots,k_2\rbrace \cap \lbrace z(1),\cdots,z(k_1)\rbrace$ and suppose that $\lbrace h_1,\cdots, h_{k_2-r}\rbrace =\lbrace1,\cdots,k_2\rbrace\setminus \lbrace z(1),\cdots,z(k_1)\rbrace.$ We can find a permutation of the following form
$$\begin{matrix}
1 & 2 & \cdots & k_1 & k_1+1 & \cdots & k_1+k_2-r & k_1+k_2-r+1 & \cdots & n \\
z(1) & z(2) & \cdots & z(k_1) & h_1 & \cdots & h_{k_2-r} & * & \cdots & *
\end{matrix}$$
in $\mathcal{S}_n^{k_1}z.$ Since the multiplication by an element of $\mathcal{S}_n^{k_2}$ to the right permutes the elements greater than $k_2$ in the second line defining this permutation, the set $\mathcal{S}_n^{k_1}z\mathcal{S}_n^{k_2}$ contains thus a permutation of the following form
$$\begin{matrix}
1 & 2 & \cdots & k_1 & k_1+1 & \cdots & k_1+k_2-r & k_1+k_2-r+1 & \cdots & n \\
* & ** & \cdots & * & h_1 & \cdots & h_{k_2-r} & k_1+k_2-r+1 & \cdots & n
\end{matrix}$$
This permutation is also in $\mathcal{S}_{k_1+k_2-r}.$ We put $**$ to say that there are $r$ fixed elements (elements among $z(1),z(2), \cdots, z(k_1)$ smaller than $k_2+1$ can change position after multiplication on right by $\mathcal{S}_n^{k_2}$) in the $k_1$ first images but we are not interested in their positions. The fact that $k_1+k_2-r=|\lbrace 1,\cdots,k_2,z(1),\cdots,z(k_1)\rbrace|$ ends the proof.
\end{proof}
\begin{lem}\label{lem:hyp_6_grp_sym}
Let $z$ be an element of $\mathcal{S}_n$, then we have :
$$z\mathcal{S}_n^{k_1}z^{-1}\cap \mathcal{S}_n^{k_2}\simeq \mathcal{S}_n^{r(z)},$$
where \begin{eqnarray*}
r(z)&=&|\lbrace z^{-1}(1),z^{-1}(2),\cdots , z^{-1}(k_1),1,\cdots ,k_2\rbrace| \\
&=&k_1+k_2-|\lbrace z^{-1}(1),z^{-1}(2),\cdots , z^{-1}(k_1)\rbrace\cap\lbrace 1,\cdots ,k_2\rbrace|.
\end{eqnarray*}
If $z$ is $(k_1,k_2)$-minimal, then $r(z)=\mathrm{k}(\mathcal{S}_n^{k_1}z\mathcal{S}_n^{k_2})$ which gives us hypothesis H.5 in the case of $\mathcal{S}_n.$
\end{lem}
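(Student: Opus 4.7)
The plan is to deduce both assertions from the identification of $\mathcal{S}_n^{k_1}$ with the pointwise stabilizer of $\{1,2,\dots,k_1\}$ in $\mathcal{S}_n$. Because the author's convention gives $(\sigma z)(i)=z(\sigma(i))$, the triple product satisfies $(z\sigma z^{-1})(j)=z^{-1}(\sigma(z(j)))$. Hence, for $\sigma\in\mathcal{S}_n^{k_1}$, the element $z\sigma z^{-1}$ fixes $j$ iff $\sigma$ fixes $z(j)$, which is forced for every $\sigma\in\mathcal{S}_n^{k_1}$ precisely when $z(j)\in\{1,\dots,k_1\}$, i.e.\ when $j\in\{z^{-1}(1),\dots,z^{-1}(k_1)\}$. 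This identifies $z\mathcal{S}_n^{k_1}z^{-1}$ with the pointwise stabilizer of $F_1:=\{z^{-1}(1),\dots,z^{-1}(k_1)\}$ in $\mathcal{S}_n$. Intersecting with $\mathcal{S}_n^{k_2}$, the pointwise stabilizer of $F_2:=\{1,\dots,k_2\}$, produces the pointwise stabilizer of $F:=F_1\cup F_2$, which has cardinality $r(z)$.

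The first assertion then follows from the elementary fact that the pointwise stabilizer of any $r$-element subset of $[n]$ in $\mathcal{S}_n$ is a subgroup isomorphic to $\mathcal{S}_{n-r}$, and in particular isomorphic (via relabelling) to the specific copy $\mathcal{S}_n^r$. The second equivalent formula for $r(z)$ in terms of the intersection size is inclusion-exclusion, using $|F_1|=k_1$ and $|F_2|=k_2$.

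For the second assertion, I would split the equality $r(z)=m$, where $m:=\mathrm{k}(\mathcal{S}_n^{k_1}z\mathcal{S}_n^{k_2})$, into two inequalities. The easier bound $r(z)\le m$: since $z$ is $(k_1,k_2)$-minimal we have $z\in\mathcal{S}_m$ and therefore $z^{-1}\in\mathcal{S}_m$, so every element of $F_1$ is at most $m$; combined with $F_2\subseteq\{1,\dots,k_2\}\subseteq\{1,\dots,m\}$ (which holds in the regime $m\ge\max(k_1,k_2)$ that is forced by Lemma \ref{lem:k<k_1+k_2_group_sym} in non-degenerate situations), this gives $F\subseteq\{1,\dots,m\}$ and hence $r(z)\le m$.

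The main obstacle is the reverse inequality $r(z)\ge m$, which is where minimality of $z$ must be exploited genuinely. My plan is by contrapositive: if $r(z)<m$, then some $i^{*}\in\{1,\dots,m\}$ fails to lie in $F$, meaning that simultaneously $i^{*}>k_2$ and $z(i^{*})>k_1$. I would use this to build an explicit pair $(\sigma_1,\sigma_2)\in\mathcal{S}_n^{k_1}\times\mathcal{S}_n^{k_2}$ such that $\sigma_1 z\sigma_2$ has support contained in $\{1,\dots,m-1\}$, contradicting the definition of $m$; the construction mimics the explicit two-line-notation bookkeeping used in the proof of Lemma \ref{lem:k<k_1+k_2_group_sym}, with $\sigma_1$ chosen so as to redirect the column $i^{*}$ onto the unused slot $m$, and $\sigma_2$ restoring the resulting entry to a diagonal one. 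This explicit swapping argument is the technical heart of the proof, and once carried through it yields $r(z)\ge m$, completing the equality and hence hypothesis H.5 for the symmetric group via the first part of the lemma.
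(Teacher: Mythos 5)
Your first paragraph reproduces, in slightly more systematic language, exactly the argument the paper gives for the first assertion: $z\mathcal{S}_n^{k_1}z^{-1}$ is the pointwise stabilizer of $F_1=\{z^{-1}(1),\dots,z^{-1}(k_1)\}$, its intersection with $\mathcal{S}_n^{k_2}$ is the pointwise stabilizer of $F=F_1\cup\{1,\dots,k_2\}$, and the pointwise stabilizer of an $r$-element set is isomorphic to $\mathcal{S}_n^{r}$. That part is fine, and it is in fact \emph{all} that the paper's own proof establishes; the second assertion (that $r(z)=\mathrm{k}(\mathcal{S}_n^{k_1}z\mathcal{S}_n^{k_2})$ when $z$ is minimal) is asserted in the paper without proof, so there you are attempting something extra.

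It is precisely in that extra part that your argument has a genuine gap. A minor issue first: the regime $m\ge\max(k_1,k_2)$ is not ``forced by Lemma \ref{lem:k<k_1+k_2_group_sym}'' --- that lemma is an \emph{upper} bound on $m$, not a lower bound --- so the inclusion $F\subseteq\{1,\dots,m\}$ needs separate justification. The serious problem is the reverse inequality, which you defer as ``once carried through.'' Your witness $i^{*}\in\{1,\dots,m\}\setminus F$ satisfies $i^{*}>k_2$ and $z(i^{*})>k_1$, but the reduction you sketch --- redirect column $i^{*}$ onto slot $m$ by a transposition in $\mathcal{S}_n^{k_1}$, then restore a fixed point at $m$ by a transposition in $\mathcal{S}_n^{k_2}$ --- requires $i^{*}>k_1$ and $z(i^{*})>k_2$: the two conditions are transposed relative to what $i^{*}\notin F$ provides. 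Carrying the swap through (in either order) what one actually proves is $m\le|\{1,\dots,k_2\}\cup\{z(1),\dots,z(k_1)\}|$, i.e.\ a restatement of Lemma \ref{lem:k<k_1+k_2_group_sym}, and not $m\le r(z)=|\{1,\dots,k_2\}\cup\{z^{-1}(1),\dots,z^{-1}(k_1)\}|$. These two cardinalities genuinely differ for general $z$ (take $k_1=1$, $k_2=3$, $z$ the cycle sending $1\mapsto 4\mapsto 2\mapsto 1$: the first is $4$, the second is $3$), so the step you call the technical heart is not routine bookkeeping; as sketched it proves the wrong inequality. Closing it requires either pinning down which of $z$, $z^{-1}$ should appear in $r(z)$ --- note the paper itself applies H.5 in the proof of Proposition \ref{F} to $X^{-1}K_n^{k_1}X\cap K_n^{k_2}$ rather than $XK_n^{k_1}X^{-1}\cap K_n^{k_2}$, so the composition convention matters here --- or a different argument showing that every minimal representative satisfies $|F|=m$.
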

\begin{proof}
Let $a=zbz^{-1}$ be an element of $\mathcal{S}_n$ which fixes the $k_2$ first elements while $b$ fixes the $k_1$ first elements. Then $a$ also fixes the elements $z^{-1}(1),\cdots, z^{-1}(k_1)$ which proves that $z\mathcal{S}_n^{k_1}z^{-1}\cap \mathcal{S}_n^{k_2}\subseteq \mathcal{S}_n^{r(z)}.$ In the opposite direction, if $x$ is a permutation of $n$ which fixes the elements of the set $\lbrace z^{-1}(1),z^{-1}(2),\cdots , z^{-1}(k_1),1,\cdots ,k_2\rbrace$ then $x$ is in $\mathcal{S}_n^{k_2}$ and in addition $z^{-1}xz$ is in $\mathcal{S}_n^{k_1}$ which implies that $x=zz^{-1}xzz^{-1}$ is in $z\mathcal{S}_n^{k_1}z^{-1}.$
\end{proof}

\subsection{Hypotheses H.1 to H.5 of our general framework in the case of the hyperoctahedral group}\label{sec:cond_group_hyp}

Here we show that the hyperoctahedral group $\mathcal{B}_n$ fulfils the hypotheses on the sequence of the sub-groups $K_n$ (that means hypotheses H.1 to H.5; we are not interested in H.0 since it also depends on $G_n$).
Let $1\leq k\leq n$, the set $\mathcal{B}_n^k$ represents the hyperoctahedral sub-group of $\mathcal{S}_{2n}$ which acts on the $2n-2k$ last elements of the set $[2n].$ Explicitly,
$$\mathcal{B}_n^k:=\lbrace x\in \mathcal{S}_{2n} \text{ such that }x(1)=1,\, x(2)=2, \cdots , x(2k)=2k\rbrace.$$
It is evident that $\mathcal{B}_n^k$ is isomorphic to $\mathcal{B}_{n-k}$ for each $1\leq k\leq n$ and thus H.1 is satisfied. For each $x\in \mathcal{B}_k$ and for any $y\in \mathcal{B}_{n-k}$, the composition of $x$ and $y$ commutes because the two permutations $x$ and $y$ act on disjoint sets, in other words H.2 is also satisfied. If $1\leq k\leq n,$ then $\mathcal{B}_{n+1}^{k}\cap \mathcal{B}_n$ is the set of permutations of $2n+2$ which fix $1,2,\cdots,2k, 2n+1$ and $2n+2$, thus $\mathcal{B}_{n+1}^{k}\cap \mathcal{B}_n=\mathcal{B}_{n}^{k}$ and H.3 is satisfied. The other necessary hypotheses are proven in the following two lemmas.

\begin{lem} (H.4 for $\mathcal{B}_n$)
Let $z\in \mathcal{B}_n$, then we have :
$$\mathrm{k}(\mathcal{B}_n^{k_1} z\mathcal{B}_n^{k_2})\leq \frac{|\lbrace 1,\cdots,2k_2,z(1),\cdots,z(2k_1)\rbrace|}{2}\leq k_1+k_2.$$
\end{lem}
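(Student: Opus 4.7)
The plan is to imitate the proof of the preceding lemma (H.4 for $\mathcal{S}_n$), while carefully tracking the pair structure preserved by elements of $\mathcal{B}_n$. Since $z\in\mathcal{B}_n$ sends every pair $\{2i-1,2i\}$ to another pair of the same form, the set $\{z(1),\ldots,z(2k_1)\}$ is a union of $k_1$ such pairs and $\{1,\ldots,2k_2\}$ is a union of $k_2$ pairs. Letting $r$ denote the number of pairs common to these two collections, I immediately get
$$|\{1,\ldots,2k_2,z(1),\ldots,z(2k_1)\}|=2(k_1+k_2-r),$$
so the second inequality in the statement is simply the bound $r\geq 0$. The task then reduces to exhibiting an element of $\mathcal{B}_n^{k_1}z\mathcal{B}_n^{k_2}$ that lies in $\mathcal{B}_{k_1+k_2-r}$.

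I will construct this representative explicitly, in close analogy with the symmetric group argument. Write the $k_2-r$ pairs in $\{1,\ldots,2k_2\}\setminus\{z(1),\ldots,z(2k_1)\}$ as $\{h_j,h'_j\}_{1\leq j\leq k_2-r}$. Left multiplication by $\mathcal{B}_n^{k_1}$ permutes the images of the positions $>2k_1$ in a pair-preserving way (because $\mathcal{B}_n^{k_1}$ permutes those positions pair-wise and $z$ sends pairs to pairs). Since each leftover pair $\{h_j,h'_j\}$ is one of the ``tail pairs'' $\{z(2k_1+2\ell-1),z(2k_1+2\ell)\}$ of $z$, one can choose $y\in\mathcal{B}_n^{k_1}z$ with $\{y(2k_1+2j-1),y(2k_1+2j)\}=\{h_j,h'_j\}$ for $1\leq j\leq k_2-r$.

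After this step, for $i>2(k_1+k_2-r)$ the values $y(i)$ lie entirely in $\{2k_2+1,\ldots,2n\}$ (the pairs of $\{1,\ldots,2k_2\}$ have all been used up as images of the positions $\leq 2(k_1+k_2-r)$) and still form a union of pairs. A suitable $\sigma\in\mathcal{B}_n^{k_2}$, acting pair-preservingly on $\{2k_2+1,\ldots,2n\}$, can then be chosen so that $(y\sigma)(i)=i$ for every $i>2(k_1+k_2-r)$. The resulting permutation $y\sigma$ lies in $\mathcal{B}_n^{k_1}z\mathcal{B}_n^{k_2}$, preserves the pair structure as a product of three pair-preserving permutations, and has support contained in $\{1,\ldots,2(k_1+k_2-r)\}$; hence $y\sigma\in\mathcal{B}_{k_1+k_2-r}$, which yields the desired bound on $\mathrm{k}(\mathcal{B}_n^{k_1}z\mathcal{B}_n^{k_2})$.

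The main point requiring care is ensuring that every step of the construction respects the pair structure: this is why the leftover pairs $\{h_j,h'_j\}$ must be assigned to the position pairs $\{2k_1+2j-1,2k_1+2j\}$ rather than to arbitrary positions, and why the right-multiplying element must be chosen inside $\mathcal{B}_n^{k_2}$ rather than in the larger point-stabiliser $\mathcal{S}_{2n}^{2k_2}$. Once these bookkeeping points are set up correctly, the argument is essentially the pair-indexed analogue of the proof for $\mathcal{S}_n$ and no new combinatorial difficulty arises.
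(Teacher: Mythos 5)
Your proof is correct and follows exactly the route the paper intends: the paper's own proof of this lemma is the single remark that it is similar to the $\mathcal{S}_n$ case, and your argument is precisely that proof transported to pairs, with the two bookkeeping points (assigning each leftover pair $\{h_j,h'_j\}$ to a whole position-pair, and choosing the right factor inside $\mathcal{B}_n^{k_2}$ rather than $\mathcal{S}_{2n}^{2k_2}$) handled correctly. The only caveat, inherited from the paper's $\mathcal{S}_n$ proof, is that describing $\mathcal{B}_n^{k_1}z$ as the set of elements agreeing with $z$ on the first $2k_1$ points presupposes the paper's implicit left-to-right composition convention.
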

\begin{proof}
The proof of this lemma is similar to that of Lemma \ref{lem:k<k_1+k_2_group_sym}.
\end{proof}
\begin{lem} (H.5 for $\mathcal{B}_n$)
Let $z$ be an element of $\mathcal{B}_n$, then we have :
$$z\mathcal{B}_n^{k_1}z^{-1}\cap \mathcal{B}_n^{k_2}\simeq \mathcal{B}_n^{r(z)},$$
where \begin{eqnarray*}
r(z)&=&|\lbrace z^{-1}(1),z^{-1}(2),\cdots , z^{-1}(2k_1),1,\cdots ,2k_2\rbrace| \\
&=&2k_1+2k_2-|\lbrace z^{-1}(1),z^{-1}(2),\cdots , z^{-1}(2k_1)\rbrace\cap\lbrace 1,\cdots ,2k_2\rbrace|.
\end{eqnarray*}
If $z$ is $(k_1,k_2)$-minimal, then $r(z)=\mathrm{k}(\mathcal{B}_n^{k_1}z\mathcal{B}_n^{k_2}),$ thus we have H.5 for $\mathcal{B}_n.$
\end{lem}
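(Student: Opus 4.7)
The plan is to mimic Lemma~\ref{lem:hyp_6_grp_sym} (the $\mathcal{S}_n$ analogue), taking extra care to respect the pair structure preserved by $\mathcal{B}_n$. Write $P=\{\{1,2\},\{3,4\},\ldots,\{2n-1,2n\}\}$ for the canonical pairing of $[2n]$, so that $\mathcal{B}_n$ is the set-wise stabiliser of $P$ in $\mathcal{S}_{2n}$. The crucial observation is that since $z\in\mathcal{B}_n$ permutes $P$ and $\{1,\ldots,2k_1\}$ is a union of $k_1$ pairs of $P$, the pre-image $z^{-1}(\{1,\ldots,2k_1\})$ is again a union of $k_1$ pairs of $P$. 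Consequently the set
$$S:=\{z^{-1}(1),\ldots,z^{-1}(2k_1),1,\ldots,2k_2\}$$
is itself a union of pairs from $P$, of cardinality $r(z)$.

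The first step is to establish the identification
$$z\mathcal{B}_n^{k_1}z^{-1}\cap\mathcal{B}_n^{k_2}=\{a\in\mathcal{B}_n : a(s)=s \text{ for every }s\in S\}.$$
The inclusion $\subseteq$ runs exactly as in Lemma~\ref{lem:hyp_6_grp_sym}: writing $a=zbz^{-1}$ with $b\in\mathcal{B}_n^{k_1}$ and $a\in\mathcal{B}_n^{k_2}$, the element $a$ fixes $1,\ldots,2k_2$ by hypothesis and fixes $z^{-1}(1),\ldots,z^{-1}(2k_1)$ because $b$ acts as the identity on $1,\ldots,2k_1$. The inclusion $\supseteq$ is equally direct: any $a\in\mathcal{B}_n$ pointwise fixing $S$ lies automatically in $\mathcal{B}_n^{k_2}$, and the conjugate $z^{-1}az$ lies in $\mathcal{B}_n^{k_1}$, so that $a=z(z^{-1}az)z^{-1}\in z\mathcal{B}_n^{k_1}z^{-1}$.

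The second step exploits the pair structure: because $S$ is a union of pairs from $P$, the pointwise stabiliser in $\mathcal{B}_n$ of $S$ permutes the remaining pairs of $P$ as a full hyperoctahedral group on those pairs, giving the claimed isomorphism with $\mathcal{B}_n^{r(z)}$ in the paper's indexing convention.

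Finally, the minimality statement will follow by combining the preceding hyperoctahedral analogue of Lemma~\ref{lem:k<k_1+k_2_group_sym} (that is, H.4 for $\mathcal{B}_n$) with the characterisation of $S$: that lemma gives one inequality, while the reverse inequality comes from the fact that a representative of $\mathcal{B}_n^{k_1}z\mathcal{B}_n^{k_2}$ lying in $\mathcal{B}_m$ forces every element of $S$ to belong to $\{1,\ldots,2m\}$. I expect the principal obstacle to be bookkeeping: one must keep straight throughout whether $r(z)$ is being used as a raw cardinality of $S$ or as the number of pairs of $P$ contained in $S$, so that both the isomorphism with $\mathcal{B}_n^{r(z)}$ and the identification with $m_{k_1,k_2}(z)$ are consistent with the paper's conventions. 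Once this bookkeeping is fixed, the proof proceeds line-for-line as in the symmetric group case.
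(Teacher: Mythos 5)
Your proposal is correct and follows essentially the same route as the paper, which itself only says the proof is ``similar to that of Lemma \ref{lem:hyp_6_grp_sym}'': you carry out exactly that adaptation, supplying the one genuinely new ingredient (that $S$ is a union of pairs of $P$ because $z\in\mathcal{B}_n$ permutes the pairing, so its pointwise stabiliser in $\mathcal{B}_n$ is again a hyperoctahedral group). Your remark about keeping track of whether $r(z)$ counts elements or pairs is well taken --- the paper's own indexing of $\mathcal{B}_n^{r(z)}$ is off by that factor of $2$ relative to the convention used in H.4 --- but this is a notational wrinkle, not a gap in your argument.
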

\begin{proof}
The proof of this lemma is similar to that of Lemma \ref{lem:hyp_6_grp_sym}. 
\end{proof}

\subsection{The Hecke algebra of the pair $(\mathcal{S}_{2n},\mathcal{B}_n)$}\label{sec:alg_de_hecke}

As we have seen, the hyperoctahedral group $\mathcal{B}_n$ satisfies hypotheses H.1 to H.5 required in Section \ref{sec:hypo_defi}. To apply Theorem \ref{mini_th}, we also need to verify H.0 in the case of the sequence $(\mathcal{S}_{2n},\mathcal{B}_n)_n.$ In other words, we should prove that for any permutation $x$ of $\mathcal{S}_{2n}$ we have :
$$\mathcal{B}_{n+1}x\mathcal{B}_{n+1}\cap \mathcal{S}_{2n}=\mathcal{B}_{n}x\mathcal{B}_{n}.$$
A proof for hypothesis H.0 for the sequence $(\mathcal{S}_{2n},\mathcal{B}_n)_n$ can be given using the combinatorial description of the $\mathcal{B}_n$-double-classes. In fact, if $x\in \mathcal{S}_{2n}$ then the double-class $\mathcal{B}_nx\mathcal{B}_n$ is the set of permutations of $2n$ with coset-type\footnote{We refer to \cite[page 401]{McDo} for a definition of the coset-type of a permutation of $2n$} equals to $\ct(x).$ Likewise, the double-class $\mathcal{B}_{n+1}x\mathcal{B}_{n+1}$ ($x$ is now seen as a permutation of $2n+2$) is the set of permutations of $2n+2$ with coset-type equals to $\ct(x)\cup (1)$ which corresponds to $\mathcal{B}_nx\mathcal{B}_n$ when we take the intersection with $\mathcal{S}_{2n}.$

Let $\lambda$ be an element of $\mathcal{PP}_{\leq n}$, the double-class of $\mathcal{B}_n$ in $\mathcal{S}_{2n}$ associated to $\lambda$ is, according to \cite{toutejc}, as follows :
$${K}_{\underline{\lambda}_n}=\lbrace \omega\in \mathcal{S}_{2n}\text{ such that } \ct(\omega)=\lambda\cup (1^{n-|\lambda|})\rbrace.$$
The size of ${K}_{\underline{\lambda}_n}$ is given by the following formula :
\begin{equation}
|{K}_{\underline{\lambda}_n}|=\frac{(2^nn!)^2}{z_{2\lambda}2^{n-|\lambda|}(n-|\lambda|)!}.
\end{equation} 

We fix three proper partitions $\lambda$, $\delta$ and $\rho.$ Let $n$ be an integer sufficiently big. By using Theorem \ref{mini_th}, the coefficient ${\bf K}_{\underline{\rho}_n}$ in the product ${\bf K}_{\underline{\lambda}_n}{\bf K}_{\underline{\delta}_n}$ is of the following form :
\begin{equation}
\frac{\frac{(2^nn!)^2}{z_{2\lambda}2^{n-|\lambda|}(n-|\lambda|)!}\frac{(2^nn!)^2}{z_{2\delta}2^{n-|\delta|}(n-|\delta|)!}2^{n-|\lambda|}(n-|\lambda|)!2^{n-|\delta|}(n-|\delta|)!}{2^nn!\frac{(2^nn!)^2}{z_{2\rho}2^{n-|\rho|}(n-|\rho|)!}}\sum_{|\rho|\leq k\leq |\lambda|+|\delta|}\frac{a_{\lambda\delta}^{\rho}(k)}{2^{n-k}({n-k)!}}
\end{equation}
which is, after simplification, equal to :
\begin{equation}
2^nn!\frac{z_{2\rho}}{z_{2\lambda}z_{2\delta}}\sum_{|\rho|\leq k\leq |\lambda|+|\delta|}a_{\lambda\delta}^{\rho}(k)2^{k-|\rho|}\frac{(n-|\rho|)!}{(n-k)!}.
\end{equation}
This gives us the following corollary.

\begin{cor}
Let $\lambda$ and $\delta$ be two proper partitions. Let $n$ be an integer sufficiently big and consider the following equation :
$${\bf K}_{\underline{\lambda}_n}{\bf K}_{\underline{\delta}_n}=\sum_{\rho \text{ proper partition}} {c'}_{\lambda\delta}^{\rho}(n) {\bf K}_{\underline{\rho}_n}.$$
The coefficients $\frac{{c'}_{\lambda\delta}^{\rho}(n)}{2^nn!}$ are polynomials in $n$ with rational coefficients.
\end{cor}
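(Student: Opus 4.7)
The plan is to observe that the Corollary is an almost immediate consequence of the explicit formula derived in the two displayed equations that precede it, combined with Theorem \ref{mini_th}. The heavy lifting (checking hypotheses H.0--H.5, invoking the general structure-coefficient formula, and simplifying the factorials of cardinalities of $\mathcal{B}_n$-double-classes) has already been carried out in the preceding paragraph, so the Corollary should essentially amount to reading off the resulting expression and identifying the polynomial-in-$n$ pieces.

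First I would apply Theorem \ref{mini_th} to $(G_n, K_n) = (\mathcal{S}_{2n}, \mathcal{B}_n)$, whose validity has just been secured by Sections \ref{sec:cond_group_hyp} and the verification of H.0 using the coset-type characterisation of $\mathcal{B}_n$-double-classes. This yields non-negative rational constants $a_{\lambda\delta}^{\rho}(k)$, independent of $n$, so that dividing the displayed expression for ${c'}_{\lambda\delta}^{\rho}(n)$ by $2^n n!$ gives
\begin{equation*}
\frac{{c'}_{\lambda\delta}^{\rho}(n)}{2^n n!} = \frac{z_{2\rho}}{z_{2\lambda} z_{2\delta}} \sum_{|\rho| \leq k \leq |\lambda|+|\delta|} a_{\lambda\delta}^{\rho}(k)\, 2^{k-|\rho|}\, \frac{(n-|\rho|)!}{(n-k)!}.
\end{equation*}

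Next I would point out that, for each fixed $k$ with $|\rho| \leq k \leq |\lambda|+|\delta|$, the ratio $\frac{(n-|\rho|)!}{(n-k)!}$ is the falling factorial $(n-|\rho|)(n-|\rho|-1)\cdots(n-k+1)$, hence a polynomial in $n$ of degree $k-|\rho|$ with integer coefficients (and equal to $1$ when $k=|\rho|$). Since the prefactor $z_{2\rho}/(z_{2\lambda} z_{2\delta})$ and the coefficients $a_{\lambda\delta}^{\rho}(k)\, 2^{k-|\rho|}$ are rational and independent of $n$, the right-hand side is a $\mathbb{Q}$-linear combination of finitely many polynomials in $n$, hence itself a polynomial in $n$ with rational coefficients, of degree at most $|\lambda|+|\delta|-|\rho|$.

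There is no genuine obstacle left at this stage: everything nontrivial (associativity-free manipulation of partial elements, the compatibility of $\psi_n$, and especially Theorem \ref{mini_th}) has been established earlier, and the only thing to notice is the falling-factorial structure of $(n-|\rho|)!/(n-k)!$. If anything, the only subtlety worth flagging in the write-up is that the sum is effectively over integers $k$ in a range that does not depend on $n$ (guaranteed by Theorem \ref{mini_th}, which is why we need $n$ sufficiently large), so the number of terms is uniformly bounded and the polynomial identity is genuine rather than eventual-per-term. I would close with the remark that the degree bound $|\lambda|+|\delta|-|\rho|$ recovers the classical degree bound of Farahat--Higman type, thereby consistency-checking the statement against the known case.
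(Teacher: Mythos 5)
Your proposal is correct and follows essentially the same route as the paper: apply Theorem \ref{mini_th} to the pair $(\mathcal{S}_{2n},\mathcal{B}_n)$, substitute the known cardinalities of the $\mathcal{B}_n$-double-classes, simplify to the displayed expression $2^nn!\frac{z_{2\rho}}{z_{2\lambda}z_{2\delta}}\sum_{k}a_{\lambda\delta}^{\rho}(k)2^{k-|\rho|}\frac{(n-|\rho|)!}{(n-k)!}$, and observe that each ratio $(n-|\rho|)!/(n-k)!$ is a falling factorial, hence a polynomial in $n$, summed over a range of $k$ independent of $n$. The degree bound $|\lambda|+|\delta|-|\rho|$ you add is a correct and welcome extra observation not stated in the paper's corollary.
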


This corollary is the main result of \cite{toutejc} on the polynomiality of the structure coefficients of the algebra \Hecke. 

Otherwise, Theorem \ref{main_th} can be used to find the exact values of the structure coefficients of the Hecke algebra of the pair $(\mathcal{S}_{2n},\mathcal{B}_n).$ However, this can be done only in particular cases and may be very complicated as it is shown in the following example.

\begin{ex}

Let $k_1=k_2=2,$ $x_1=(1~~2~~4~~3)$ and $x_2=(1~~4~~2)(3).$ Let $n$ be an integer sufficiently big, then $$\sum_{y_1\sim x_1}y_1=\sum_{y_2\sim x_2}y_2={\bf K}_{\underline{(2)}_n}.$$
Since the $k$'s in the sum index of Theorem \ref{main_th} must be less than $k_1+k_2$ and greater than $k_1$ and $k_2$, $k$ is thus either $2$, $3$ or $4.$\\
For $k=2$, all the permutations of $\mathcal{B}_2$ are $(2,2)$-minimals.\\
For $k=3$, the $(2,2)$-minimal permutations are those which are in $\mathcal{B}_3$ and send the set $\lbrace 5,6\rbrace$ to $\lbrace 1,2\rbrace$ or $\lbrace 3,4\rbrace.$\\
For $k=4$, the $(2,2)$-minimal permutations are those which belong to $\mathcal{B}_4$ and send the set $\lbrace 1,2,3,4\rbrace$ to $\lbrace 5,6,7,8\rbrace.$\\
Therefore, for $k=2$, the permutations $x\in \mathcal{S}_4$ such that $x_1^{-1}xx_2^{-1}$ is $(2,2)$-minimal are the permutations of the set $x_1\mathcal{B}_2x_2,$ which are, ${\color{green}(1~~2)(3~~4)}$, ${\color{blue}(1~~2~~4)(3)}$, ${\color{blue}(1)(2~~3~~4)}$, ${\color{green}(1~~4~~2~~3)}$, ${\color{blue}(1~~3~~2)(4)}$, ${\color{green}(1~~3~~2~~4)}$, ${\color{green}(1)(2)(3~~4)}$, ${\color{blue}(1~~4~~3)(2)}$.\\
Likewise, for $k=3$, the permutations $x\in \mathcal{S}_4$ such that $x_1^{-1}xx_2^{-1}$ is $(2,2)$-minimal are :\\
$\begin{pmatrix}
2&6&3&5&4&1\\
3&6&2&5&4&1\\
2&5&3&6&4&1\\
3&5&2&6&4&1\\
5&2&6&3&4&1\\
6&2&5&3&4&1\\
5&3&6&2&4&1\\
6&3&5&2&4&1
\end{pmatrix}$~~$\begin{pmatrix}
2&6&3&5&1&4\\
3&6&2&5&1&4\\
2&5&3&6&1&4\\
3&5&2&6&1&4\\
5&2&6&3&1&4\\
6&2&5&3&1&4\\
5&3&6&2&1&4\\
6&3&5&2&1&4
\end{pmatrix}$~~$\begin{pmatrix}
1&6&4&5&2&3\\
1&5&4&6&2&3\\
4&6&1&5&2&3\\
4&5&1&6&2&3\\
5&1&6&4&2&3\\
5&4&6&1&2&3\\
6&1&5&4&2&3\\
6&4&5&1&2&3
\end{pmatrix}$~~$\begin{pmatrix}
1&6&4&5&3&2\\
1&5&4&6&3&2\\
4&6&1&5&3&2\\
4&5&1&6&3&2\\
5&1&6&4&3&2\\
5&4&6&1&3&2\\
6&1&5&4&3&2\\
6&4&5&1&3&2
\end{pmatrix}.$\\
In every matrix above, each line defines a permutation.\\
For $k=4$, the permutations $x\in \mathcal{S}_4$ such that $x_1^{-1}xx_2^{-1}$ is $(2,2)$-minimal are those of coset-type $(2,2)$ such that the image of $\lbrace 1,2,3,4\rbrace$ is $\lbrace 5,6,7,8\rbrace.$\\
The permutations written in green have $(1^2)$ as coset-type. These permutations give the coefficient of ${\bf K}_{\underline{\emptyset}_n}$. For each one of them, $|Cl_{k_1,k_2}(x_1^{-1}xx_2^{-1})\cap K_{m_{k_1,k_2}(x_1^{-1}xx_2^{-1})}|=1.$ Thus, the coefficient of ${\bf K}_{\underline{\emptyset}_n}$ is, by using Theorem \ref{main_th}, as follows :
$$4\frac{(2^nn!n(n-1))^2(2^{n-2}(n-2)!)^2}{2^nn!2^nn!2^{n-2}(n-2)!}=2^nn!n(n-1).$$
The permutations written in blue have $(2)$ as coset-type. These permutations give the coefficient of ${\bf K}_{\underline{(2)}_n}.$ To each one of them  $|Cl_{k_1,k_2}(x_1^{-1}xx_2^{-1})\cap K_{m_{k_1,k_2}(x_1^{-1}xx_2^{-1})}|=1$. Thus, by Theorem \ref{main_th} also, the coefficient of ${\bf K}_{\underline{(2)}_n}$ is :
$$4\frac{(2^nn!n(n-1))^2(2^{n-2}(n-2)!)^2}{2^nn!2^nn!n(n-1)2^{n-2}(n-2)!}=2^nn!.$$
All permutations written in matrix form above have $(3)$ as coset-type. These permutations give the coefficient of ${\bf K}_{\underline{(3)}_n}$. To each one of them $|Cl_{k_1,k_2}(x_1^{-1}xx_2^{-1})\cap K_{m_{k_1,k_2}(x_1^{-1}xx_2^{-1})}|=4.$ Thus, the coefficient of ${\bf K}_{\underline{(3)}_n}$ is :
$$8.4\frac{(2^nn!n(n-1))^2(2^{n-2}(n-2)!)^2}{2^nn!\frac{4}{3}2^nn!n(n-1)(n-2)2^{n-3}(n-3)!4}=3\cdot2^nn!.$$
The permutations $x\in \mathcal{S}_4$ of coset-type $(2,2)$ such that the image of $\lbrace 1,2,3,4\rbrace$ is $\lbrace 5,6,7,8\rbrace$ give us the coefficient ${\bf K}_{\underline{(2^2)}_n}.$ To each one of them, we have $|Cl_{k_1,k_2}(x_1^{-1}xx_2^{-1})\cap K_{m_{k_1,k_2}(x_1^{-1}xx_2^{-1})}|=64.$ Thus, the coefficient of ${\bf K}_{\underline{(2,2)}_n}$ is :
$$64\frac{(2^nn!n(n-1))^2(2^{n-2}(n-2)!)^2}{2^nn!2^{n-1}n!n(n-1)(n-2)(n-3)2^{n-4}(n-4)!64}=2\cdot2^nn!.$$

Thus we can obtain the complete formula of the product ${\bf K}_{\underline{(2)}_n}\cdot {\bf K}_{\underline{(2)}_n}$ for any $n\geq 4$,
$${\bf K}_{\underline{(2)}_n}\cdot {\bf K}_{\underline{(2)}_n}=2^nn!n(n-1){\bf K}_{\underline{\emptyset}_n}+2^{n}n!{\bf K}_{\underline{(2)}_n}+2^{n}n!3{\bf K}_{\underline{(3)}_n}+2^{n}n!2{\bf K}_{\underline{(2^2)}_n}.$$

\end{ex}

The exact values of the structure coefficients in the product ${\bf K}_{\underline{(2)}_n}\cdot {\bf K}_{\underline{(2)}_n}$ can be found in Example 4.1, page 28, in \cite{toutejc}.

\subsection{The double-class algebra of $\diag(\mathcal{S}_{n-1})$ in $\mathcal{S}_n\times \mathcal{S}_{n-1}^{opp}$}\label{alg_double_classe_diag(S_n-1)}

In this section, we consider $\mathcal{S}_{n-1}$ to be the sub-group $\mathcal{S}_n^1$ of $\mathcal{S}_n.$ That means that $\mathcal{S}_{n-1}$ is the set of permutations of $n$ which fix $1.$ Note that to prove hypotheses H.1 to H.5 for the symmetric group in Section \ref{sec:hyp_grp_sym}, we saw the group $\mathcal{S}_{n-1}$ as the sub-group of $\mathcal{S}_{n}$ of permutations which fix $n.$ By considering $\mathcal{S}_{n-1}$ as the sub-group $\mathcal{S}_n^1$ of $\mathcal{S}_n,$ hypotheses H.1 to H.5 remain valid and their proofs are the same 'up to isomorphisms' as in Section \ref{sec:hyp_grp_sym}.

Hypothesis H.0 is also satisfied in the case of the sequence $(\mathcal{S}_n\times \mathcal{S}_{n-1}^{opp},\diag(\mathcal{S}_{n-1})).$ It is proved by showing that if $(x,y)\in \mathcal{S}_n\times \mathcal{S}_{n-1}^{opp}$ and if $a,b\in \mathcal{S}_n$ such that $(axb,b^{-1}ya^{-1})\in \mathcal{S}_n\times \mathcal{S}_{n-1}^{opp}$ then there exists $a',b'\in \mathcal{S}_{n-1}$ such that $(axb,b^{-1}ya^{-1})=(a'xb',{b'}^{-1}y{a'}^{-1}).$ Instead of giving a direct proof, we will later show H.0 using the combinatorial description of the $\diag(\mathcal{S}_{n-1})$-double-classes. We did the same in Section \ref{sec:alg_de_hecke} for the pair $(\mathcal{S}_{2n},\mathcal{B}_n)$.

The double-class algebra of $\diag(\mathcal{S}_{n-1})$ in $\mathcal{S}_n\times \mathcal{S}_{n-1}^{opp}$ was studied by Brender in $1976,$ see \cite{brender1976spherical}. In $2007,$ Strahov proved, see \cite[Proposition 2.2.1]{strahov2007generalized}, that the pair $(\mathcal{S}_n\times \mathcal{S}_{n-1}^{opp},\diag(\mathcal{S}_{n-1}))$ is a Gelfand pair -- that means that the double-class algebra of $\diag(\mathcal{S}_{n-1})$ in $\mathcal{S}_n\times \mathcal{S}_{n-1}^{opp}$ is commutative -- then he studied the zonal spherical functions associated to this pair. In this section, we are interested in the structure coefficients of this algebra and we will give at the end of this subsection a polynomiality property.

Two permutations $x$ and $y$ of $\mathcal{S}_n$ are conjugated with respect to $\mathcal{S}_{n-1}$ if $x=zyz^{-1}$ for a certain element $z\in S_{n-1}.$ Let $x$ be a permutation of $\mathcal{S}_n$ and let $k$ be a permutation of $\mathcal{S}_{n-1},$ we know that both $x$ and $kxk^{-1}$ have the same cycle-type\footnote{The cycle-type of a permutation of $n$ is the partition of $n$ obtained using the lengths of the cycles that appear in its decomposition into disjoint cycles.}, but in addition if $c=(1,a_2, \cdots ,a_{l(c)})$ is the cycle of $x$ which contains $1$ then the cycle $(1,z^{-1}(a_2) \cdots ,z^{-1}(a_{l(c)}))$ of $zxz^{-1}$ contains $1$ and has the same length as $c.$ On the other hand, if two permutations of $n$ have the same cycle-type and if the cycles, containing $1$ in their cycle decompositions, have the same length, then it is easy to see that these two permutations are conjugated with respect to $\mathcal{S}_{n-1}.$

The conjugacy classes with respect to $\mathcal{S}_{n-1}$ are indexed by pairs $(i,\lambda)$ where $i$ is an integer between $1$ and $n$ and $\lambda$ is a partition of $n-i.$ The conjugacy class with respect to $\mathcal{S}_{n-1}$ associated to the pair $(i,\lambda)$ is as follows :
$$C_{(i,\lambda)}=\lbrace x\in \mathcal{S}_n \text{ such that $1$ is in a cycle $c$ of length $i$ and } cycle-type(x\setminus c)=\lambda\rbrace,$$
where $x\setminus c$ is the permutation obtained from $x$ by removing the cycle $c.$
According to \cite[page 118]{strahov2007generalized}, the size of such a conjugacy class is :
$$|C_{(i,\lambda)}|=\frac{(n-1)!}{z_\lambda}.$$
A detailed study of the conjugacy classes with respect to $\mathcal{S}_{n-1}$ is given by Jackson et Sloss in \cite{jackson2012character} where the authors use the pairs $(\lambda,i),$ where $\lambda$ is a partition of $n$ containing necessarily a part $i$ (the set of these partitions is in bijection with the set of partitions of $n-i$), to index them.

Let $(a,b)$ be an element of $\mathcal{S}_n\times \mathcal{S}^{opp}_{n-1}$ and let $x$ and $y$ be two elements of $\mathcal{S}_{n-1}$, then we have:
$$(x,x^{-1})\cdot (a,b)\cdot (y,y^{-1})=(xay,y^{-1}bx^{-1}).$$
Therefore, two elements $(a,b)$ and $(c,d)$ of $\mathcal{S}_n\times \mathcal{S}^{opp}_{n-1}$ are in the same $diag(\mathcal{S}_{n-1})$-double-class if and only if $ab$ and $cd$ are conjugated with respect to $\mathcal{S}_{n-1}.$

The set of $\diag(\mathcal{S}_{n-1})$-double-classes is thus also indexed by pairs $(i,\lambda)$ where $i$ is an integer between $1$ and $n$ and $\lambda$ is a partition of $n-i.$ The double-class associated to the pair $(i,\lambda)$ is as follows:
$$DC_{(i,\lambda)}=\lbrace (a,b)\in \mathcal{S}_n\times \mathcal{S}^{opp}_{n-1} \text{ such that } ab\in C_{(i,\lambda)} \rbrace.$$

Now, as we already mentioned, we will show hypothesis H.0 for the sequence $(\mathcal{S}_n\times \mathcal{S}_{n-1}^{opp},\diag(\mathcal{S}_{n-1}))$ using this description of the $\diag(\mathcal{S}_{n-1})$-double-classes. Take an element $(a,b)\in \mathcal{S}_n\times \mathcal{S}^{opp}_{n-1}$ and suppose that $ab\in C_{(i,\lambda)}$ for a certain integer $i$ between $1$ and $n$ and a partition $\lambda$ of $n-i.$ Its $\diag(\mathcal{S}_{n})$-double-class \big($(a,b)$ is now seen as an element of $\mathcal{S}_{n+1}\times \mathcal{S}^{opp}_{n}$\big) is $DC_{(i,\lambda\cup (1))}$ which becomes $DC_{(i,\lambda)}$ when we intersect it with $\mathcal{S}_n\times \mathcal{S}_{n-1}^{opp}.$

For any $a\in C_{(i,\lambda)}$ and any $x\in S_{n-1},$ the $(ax,x^{-1})$ are all different elements in $DC_{(i,\lambda)}.$ Thus, we have:
$$|DC_{(i,\lambda)}|=|S_{n-1}||C_{(i,\lambda)}|=\frac{(n-1)!^2}{z_\lambda}.$$

Let $i$ and $j$ be two integers between $1$ and $n$ and let $\lambda$ and $\delta$ be two partitions of $n-i$ and $n-j.$ The structure coefficients $c_{(i,\lambda)(j,\delta)}^{(r,\rho)}$ of the double-class algebra $\mathbb{C}[\diag(\mathcal{S}_{n-1})\setminus \mathcal{S}_n\times \mathcal{S}^{opp}_{n-1}/ \diag(\mathcal{S}_{n-1})]$ are defined by the following equation:

\begin{equation}\label{eq:strc_coef_doub_class_diag}DC_{(i,\lambda)}DC_{(j,\delta)}=\sum_{1\leq k\leq n \atop{\rho\vdash n-r}}c^{(r,\rho)}_{(i,\lambda)(j,\delta)}DC_{(r,\rho)}.
\end{equation}

\begin{definition}
A pair $(i,\lambda),$ where $i$ is an integer and $\lambda$ is a partition, is said to be {\em proper} if the partition $\lambda$ is proper. If $(i,\lambda)$ is a proper pair, for any integer $n\geq i+|\lambda|,$ we define $\underline{(i,\lambda)}_n$ to be the following pair :
$$\underline{(i,\lambda)}_n=(i,\underline{\lambda}_{(n-i)}).$$
\end{definition} 
For any integer $n\geq i+|\lambda|,$ we have:
$$|DC_{\underline{(i,\lambda)}_n}|=\frac{(n-1)!^2}{z_\lambda(n-i)!}.$$
If $(i,\lambda)$ and $(j,\delta)$ are two proper pairs, then by Equation \eqref{eq:strc_coef_doub_class_diag}, for any integer $n\geq i+|\lambda|, j+|\delta|$ we can write :
\begin{equation*}DC_{\underline{(i,\lambda)}_n}DC_{\underline{(j,\delta)}_n}=\sum_{1\leq r\leq n \atop{\rho\in \mathcal{PP}_{\leq n-r}}}c^{(r,\rho)}_{(i,\lambda)(j,\delta)}(n)DC_{\underline{(r,\rho)}_n}.
\end{equation*}

By using our result for the structure coefficients of double-class algebras given in Theorem \ref{mini_th}, there exists rational numbers $a^{(r,\rho)}_{(i,\lambda)(j,\delta)}(k)$ all independent of $n$ such that :

\begin{eqnarray*}
c^{(r,\rho)}_{(i,\lambda)(j,\delta)}(n)&=&\frac{\frac{(n-1)!^2}{z_\lambda(n-i)!}\frac{(n-1)!^2}{z_\delta(n-j)!}(n-1-i-|\lambda|)!(n-1-j-|\delta|)!}{(n-1)!\frac{(n-1)!^2}{z_\rho(n-r)!}}\\
&&~~~~~~~~~~~~~~~~~~~~~~~~~~\sum_{ r+|\rho|\leq k\leq \min(i+|\lambda|+j+|\delta|,n)}\frac{a^{(r,\rho)}_{(i,\lambda)(j,\delta)}(k)}{(n-1-k)!}\\
&=&\frac{z_\rho(n-1)!}{z_\lambda z_\delta}\frac{1}{(n-i-|\lambda|)\cdots (n-i)\cdot (n-j-|\delta|)\cdots (n-j)} \\
&&~~~~~~~~~~~~~~~~~~~~~~~~~~\sum_{ r+|\rho|\leq k\leq \min(i+|\lambda|+j+|\delta|,n)}a^{(r,\rho)}_{(i,\lambda)(j,\delta)}(k) (n-k)\cdots (n-r).
\end{eqnarray*} 

\begin{cor}
Let $\lambda, \delta$ and $\rho$ be three proper partitions, $i,j$ and $r$ three integers and let $n$ be an integer greater than $i+|\lambda|, j+|\delta|$ and $r+|\rho|,$ then the quotient

\begin{equation*}
(n-i-|\lambda|)\cdots (n-i)\cdot (n-j-|\delta|)\cdots (n-j)\cdot \frac{c^{(r,\rho)}_{(i,\lambda)(j,\delta)}(n)}{(n-1)!}
\end{equation*}
is a polynomial in $n$ with degree less or equal to $i+|\lambda|+j+|\delta|-r+1.$
\end{cor}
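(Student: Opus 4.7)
The claim should follow almost mechanically from the closed-form expression for $c^{(r,\rho)}_{(i,\lambda)(j,\delta)}(n)$ that has just been derived on the line immediately preceding the corollary. My plan is to take that identity as the starting point and simply rearrange it.

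First, I would divide both sides of the displayed formula for $c^{(r,\rho)}_{(i,\lambda)(j,\delta)}(n)$ by $(n-1)!$ and then multiply through by $(n-i-|\lambda|)\cdots(n-i)\cdot(n-j-|\delta|)\cdots(n-j)$. Because these very factors appear in the denominator of the right-hand side of that formula, they cancel exactly, leaving
\begin{equation*}
(n-i-|\lambda|)\cdots(n-i)\cdot(n-j-|\delta|)\cdots(n-j)\cdot\frac{c^{(r,\rho)}_{(i,\lambda)(j,\delta)}(n)}{(n-1)!}=\frac{z_\rho}{z_\lambda z_\delta}\sum_{r+|\rho|\leq k\leq \min(i+|\lambda|+j+|\delta|,n)} a^{(r,\rho)}_{(i,\lambda)(j,\delta)}(k)\,(n-k)(n-k+1)\cdots(n-r).
\end{equation*}
The coefficients $a^{(r,\rho)}_{(i,\lambda)(j,\delta)}(k)$ are rational numbers independent of $n$ (this is what Theorem \ref{mini_th} guarantees), and $z_\lambda, z_\delta, z_\rho$ are constants.

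Next I would observe that as soon as $n\geq i+|\lambda|+j+|\delta|$, the upper bound $\min(i+|\lambda|+j+|\delta|,n)$ in the summation equals $i+|\lambda|+j+|\delta|$, which no longer depends on $n$. Hence the right-hand side is a \emph{finite} linear combination, with coefficients free of $n$, of the rising products $P_k(n):=(n-k)(n-k+1)\cdots(n-r)$ indexed by $r+|\rho|\leq k\leq i+|\lambda|+j+|\delta|$. Each $P_k(n)$ is manifestly a polynomial in $n$, with exactly $k-r+1$ factors, hence of degree $k-r+1$.

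Finally, the bound on the total degree follows because $k$ in the sum is at most $i+|\lambda|+j+|\delta|$, so every $P_k(n)$ has degree at most $i+|\lambda|+j+|\delta|-r+1$; this yields the stated degree bound on the entire expression. There is no genuine obstacle here: all of the content is contained in Theorem \ref{mini_th} and the preceding computation, and this corollary is merely an accounting of the exponents of $n$ once the balanced factorials are paired off. The only mild subtlety is checking that taking $n$ large enough freezes the range of summation; but that follows from the explicit bound $\min(i+|\lambda|+j+|\delta|,n)=i+|\lambda|+j+|\delta|$ for $n\geq i+|\lambda|+j+|\delta|$, which is the only regime in which the polynomial identity is being asserted.
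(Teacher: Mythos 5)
Your proposal is correct and follows exactly the route the paper intends: the corollary is read off directly from the displayed formula for $c^{(r,\rho)}_{(i,\lambda)(j,\delta)}(n)$ immediately preceding it, by cancelling the factors $(n-i-|\lambda|)\cdots(n-i)\cdot(n-j-|\delta|)\cdots(n-j)$ and $(n-1)!$ and counting the $k-r+1$ factors in each product $(n-k)\cdots(n-r)$. Your remark that one should take $n\geq i+|\lambda|+j+|\delta|$ so that the summation range stabilises is a point the paper leaves implicit (it is subsumed in the ``$n$ sufficiently big'' hypothesis of Theorem \ref{mini_th}), and it is good that you made it explicit.
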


The structure coefficients $c^{(r,\rho)}_{(i,\lambda)(j,\delta)}(n)$ have a combinatorial interpretation via special graphs called \textit{dipoles}, see the paper \cite{Jackson20121856}
of Jackson and Sloss for more details about this fact. Note that these two authors also gave in \cite{jackson2012character} a theorem similar to that of Frobenius which writes the structure coefficients of the double-class algebra of $\diag(\mathcal{S}_{n-1})$ in $\mathcal{S}_n\times \mathcal{S}_{n-1}^{opp}$ in terms of generalised characters of the symmetric group.

We recalled in the introduction that the polynomiality property for the structure coefficients was used in the study of the asymptotic behaviour of some combinatorial objects related to the considered algebra. Since the polynomiality property for the structure coefficients of the double-class algebra of $\diag(\mathcal{S}_{n-1})$ in $\mathcal{S}_n\times \mathcal{S}_{n-1}^{opp}$ appears --according to the author's knowledge-- for the first time in this paper, it should be interesting to answer to the following question:

\begin{que}
Does the polynomiality property may be applied to study the asymptotic behaviour of some combinatorial objects related to the double-class algebra of $\diag(\mathcal{S}_{n-1})$ in $\mathcal{S}_n\times \mathcal{S}_{n-1}^{opp}$ ?
\end{que}

\section{A general framework in the case of centers of group algebras}\label{sec:appl_centr}

In \cite[Example 9 page 396]{McDo}, the author showed that the center of a finite group $G$ algebra can be seen as the double-class algebra of $\diag(G)$ in $G\times G^{opp}.$ In \cite{toutAFrobe} as well as in the author Phd thesis \cite[Section 1.3.2]{touPhd14} we give more details about this fact. This allows us to give "center" version of Theorem \ref{main_th} (a theorem about the form of the structure coefficients in case of a sequence of centers of finite groups). 

We consider a sequence $(G_n)_n$ where $G_n$ is a group for each $n.$ Take then the special sequence of $(G_n\times G_n^{opp},\diag(G_n))_n$ pairs. To apply Theorem \ref{main_th}, this sequence must satisfy hypotheses H.0 to H.5 in Section \ref{sec:hypo_defi}. As we have already mentioned, hypotheses H.1 to H.5 only involve the sequence $(\diag(G_n))_n$ (equivalently the sequence $(G_n)_n$). Let us now see what does it mean that $(G_n\times G_n^{opp},\diag(G_n))_n$ satisfies H.0. We show the following lemma :
\begin{lem}
The sequence $(G_n\times G_n^{opp},\diag(G_n))_n$ satisfies hypothesis H.0 if and only if the sequence $(G_n)_n$ satisfies the following hypothesis $\text{H}^{'}.0$ :
\begin{enumerate}
\item[$\text{H}^{'}.0$]\label{hyp'0} $C_g(n+1) \cap G_n=C_g(n)$ for any $g\in G_n,$ where $C_g(n)$ is the conjugacy class of $g$ in $G_n.$
\end{enumerate}
\end{lem}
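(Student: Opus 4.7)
The plan is to exploit the standard identification of $\diag(G_n)$-double-classes in $G_n \times G_n^{opp}$ with conjugacy classes in $G_n$ via the multiplication map $(a,b) \mapsto ab$. Concretely, I would first establish that for any $(a,b) \in G_n \times G_n^{opp}$,
\begin{equation*}
\diag(G_n)(a,b)\diag(G_n) = \{(a',b') \in G_n \times G_n^{opp} \mid a'b' \text{ is conjugate to } ab \text{ in } G_n\}.
\end{equation*}
The inclusion from left to right is direct, since
$(h_1, h_1^{-1})(a,b)(h_2, h_2^{-1}) = (h_1 a h_2,\, h_2^{-1} b h_1^{-1})$
has product $h_1 (ab) h_1^{-1}$. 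For the reverse inclusion, given $(a',b')$ with $a'b' = h_1 (ab) h_1^{-1}$, I would set $h_2 := a^{-1} h_1^{-1} a'$ and check directly that $h_1 a h_2 = a'$ and $h_2^{-1} b h_1^{-1} = b'$.

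With this identification in hand, I would rewrite H.0 applied to the sequence $(G_n \times G_n^{opp}, \diag(G_n))_n$. It reads: for every $(a,b) \in G_n \times G_n^{opp}$,
\begin{equation*}
\diag(G_{n+1})(a,b)\diag(G_{n+1}) \cap (G_n \times G_n^{opp}) = \diag(G_n)(a,b)\diag(G_n).
\end{equation*}
Translated through the bijection above, the left-hand side becomes $\{(a',b') \in G_n \times G_n^{opp} \mid a'b' \in C_{ab}(n+1)\}$ and the right-hand side becomes $\{(a',b') \in G_n \times G_n^{opp} \mid a'b' \in C_{ab}(n)\}$ (using that $C_{ab}(n+1) \subseteq G_{n+1}$ intersected with $G_n$ amounts to restricting the conjugacy classes). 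Since the map $(a,b) \mapsto ab$ is surjective onto $G_n$ (take $b = e$), asking this equality for every $(a,b)$ is the same as asking, for every $g \in G_n$, that the pre-images of $C_g(n+1) \cap G_n$ and of $C_g(n)$ under this surjection coincide.

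Equating these pre-images for all $(a,b)$ is equivalent to asking $C_g(n+1) \cap G_n = C_g(n)$ for all $g \in G_n$, which is precisely $\text{H}^{'}.0$. The converse direction just reverses the chain of equivalences. I do not anticipate any genuine obstacle; the only point requiring care is the explicit verification of the double-class/conjugacy-class identification in the first step (in particular, the construction of $h_2$ from $h_1$), after which the equivalence of H.0 and $\text{H}^{'}.0$ reduces to rewriting.
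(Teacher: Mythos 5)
Your proof is correct and follows essentially the same route as the paper's: both rest on identifying the $\diag(G_n)$-double-classes in $G_n\times G_n^{opp}$ with conjugacy classes of $G_n$ via the product map $(a,b)\mapsto ab$. The only difference is organisational --- you isolate this identification as an explicit preliminary step (with the construction $h_2=a^{-1}h_1^{-1}a'$), whereas the paper carries out the two implications by direct element manipulations (e.g.\ writing $(1,y)=(x^{-1},x)(1,g)(x,x^{-1})$) that use the same fact implicitly.
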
 
\begin{proof}
In fact, if $(G_n\times G_n^{opp},\diag(G_n))_n$ satisfies H.0 and if $y=xgx^{-1}$ is an element of $C_g(n+1)\cap G_n$ with $g\in G_n$ and $x\in G_{n+1}$ then $$(1,y)=(x^{-1},x)(1,g)(x,x^{-1})\in \diag(G_{n+1})(1,g)\diag(G_{n+1})\cap G_n\times G_n^{opp}.$$ But $\diag(G_{n+1})(1,g)\diag(G_{n+1})\cap G_n\times G_n^{opp}$ is $\diag(G_{n})(1,g)\diag(G_{n})$ by H.0. That means that there exists $x'\in G_n$ such that $y=x'g{x'}^{-1}$ and thus $y\in C_g(n).$ Therefore, if $(G_n\times G_n^{opp},\diag(G_n))_n$ satisfies H.0 then $(G_n)_n$ satisfies $\text{H}^{'}.0.$ Reciprocally, if $(G_n)_n$ satisfies $\text{H}^{'}.0$ and if $(x,y)\in \diag(G_{n+1})(g,f)\diag(G_{n+1})$ with $(x,y),(g,f)$ in $G_n\times G_n^{opp}$ then there exists $t\in G_{n+1}$ and $r\in G_{n+1}$ such that :
$$(x,y)=(t,t^{-1})(g,f)(r,r^{-1})=(tgr,r^{-1}ft^{-1}).$$
Thus $xy=tgft^{-1}\in C_{gf}(n+1)\cap G_n$ which is $C_{gf}(n)$ by $\text{H}^{'}.0$ (because $gf\in G_n$), consequently $(x,y)\in \diag(G_{n})(g,f)\diag(G_{n})$ which ends the proof.
\end{proof}

\begin{theoreme}\label{center_main_th}
Let $(G_n)_n$ be a sequence of finite groups satisfying hypothesis $\text{H}^{'}.0$ and the other hypotheses H.1 to H.5 of Section \ref{sec:hypo_defi}. Let $f,$ $h$ and $g$ be three elements of $G_{n_0}$ for a fixed integer and let $k_1=\mathrm{k}(C_{f}(n_0)),$ $k_2=\mathrm{k}(C_{h}(n_0))$ and $k_3=\mathrm{k}(C_{g}(n_0)).$ The structure coefficient ${c}_{f,h}^{g}(n_0)$ of ${\bf C}_{g}(n_0)$ in the product ${\bf C}_{f}(n_0){\bf C}_{h}(n_0)$ is given by the following formula :
\begin{eqnarray*}
{c}_{f,h}^{g}(n_0)&=&\frac{|C_{f}(n_0)||C_{h}(n_0)||G_{n_0-k_1}||G_{n_0-k_2}|}{|G_{n_0}||C_{g}(n_0)|} \nonumber \\
&&\sum_{ \max(k_1,k_2,k_3)\leq k\leq \min(k_1+k_2,n_0), x\in G_k, \atop{\text{ $f^{-1}xh^{-1}\in G_k$ and is $(k_1,k_2)$-minimal} \atop{xhx^{-1}f\in C_{g}(n_0)}}}\frac{1}{|G_{n_0-k}||G_{n_0}^{k_1}f^{-1}xh^{-1}G_{n_0}^{k_2}\cap G_{m_{k_1,k_2}(f^{-1}xh^{-1})}|}.
\end{eqnarray*}
\end{theoreme}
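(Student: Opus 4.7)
The plan is to derive Theorem \ref{center_main_th} as a specialization of Theorem \ref{main_th}, applied to the sequence of pairs $(G_n\times G_n^{opp},\diag(G_n))_n$, taking $G_n\times G_n^{opp}$ with componentwise multiplication so that $\diag(G_n)$ is a genuine subgroup. Under this setup, the well-known map $(a,b)\mapsto ab^{-1}$ induces a bijection between $\diag(G_n)$-double-classes and conjugacy classes of $G_n$, yielding the identification $\mathbb{C}[\diag(G_n)\setminus G_n\times G_n^{opp}/\diag(G_n)]\cong Z(\mathbb{C}[G_n])$.

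The first step is to check H.0--H.5 for this sequence of pairs. H.0 is precisely the content of the preceding lemma, given H'.0. The remaining hypotheses depend only on the subgroup sequence, and since $K_n:=\diag(G_n)\cong G_n$ and $K_n^k:=\diag(G_n^k)\cong G_n^k$, they are inherited from the assumed H.1--H.5 for $(G_n^k)$.

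The second step is to apply Theorem \ref{main_th} to the representatives $x_1=(f,1)$, $x_2=(h,1)$, $x_3=(g,1)\in G_{n_0}\times G_{n_0}^{opp}$. The double-class $\overline{(f,1)}^{n_0}$ equals $\{(u,v):uv^{-1}\in C_f(n_0)\}$ and has cardinality $|G_{n_0}|\cdot|C_f(n_0)|$; since $(f,1)\in G_k\times G_k^{opp}$ exactly when $f\in G_k$, the integer $\mathrm{k}(\overline{(f,1)}^{n_0})$ agrees with $\mathrm{k}(C_f(n_0))$, and likewise for $h,g$. A summand $\tilde x=(a,b)\in G_k\times G_k^{opp}$ satisfies $x_1^{-1}\tilde x x_2^{-1}=(f^{-1}ah^{-1},b)\in\diag(G_k)$ iff $b=f^{-1}ah^{-1}$; writing $x:=a$, this recovers the target's conditions on the sum index ($x,f^{-1}xh^{-1}\in G_k$), with the underlying diagonal element corresponding to $f^{-1}xh^{-1}$. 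Applying the identification $(a,b)\mapsto ab^{-1}$, the condition $\overline{\tilde x}^{n_0}=\overline{x_3}^{n_0}$ becomes $x\cdot(f^{-1}xh^{-1})^{-1}=xhx^{-1}f\in C_g(n_0)$. The denominator $|K_{n_0}^{k_1}(x_1^{-1}\tilde x x_2^{-1})K_{n_0}^{k_2}\cap K_m|$ transports, via the diagonal identification, to $|G_{n_0}^{k_1}(f^{-1}xh^{-1})G_{n_0}^{k_2}\cap G_{m_{k_1,k_2}(f^{-1}xh^{-1})}|$.

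The final step is to rescale between the two algebras. A direct computation in $\mathbb{C}[G_{n_0}\times G_{n_0}^{opp}]$ shows that the coefficient of $\overline{(g,1)}^{n_0}$ in $\overline{(f,1)}^{n_0}\cdot\overline{(h,1)}^{n_0}$ equals $|G_{n_0}|\cdot c_{f,h}^g(n_0)$, so one divides the output of Theorem \ref{main_th} by $|G_{n_0}|$; combined with the cardinalities $|\overline{x_i}^{n_0}|=|G_{n_0}|\cdot|C_{\cdot}(n_0)|$ and $|K_{n_0-k_i}|=|G_{n_0-k_i}|$, this produces the prefactor $\frac{|C_f(n_0)||C_h(n_0)||G_{n_0-k_1}||G_{n_0-k_2}|}{|G_{n_0}||C_g(n_0)|}$ appearing in Theorem \ref{center_main_th}. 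The main technical difficulty is the bookkeeping of these several interlocking identifications---particularly the scaling factor $|G_{n_0}|$ between double-class and center structure coefficients---after which the statement follows directly from Theorem \ref{main_th}.
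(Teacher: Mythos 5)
Your proposal is correct and follows essentially the same route as the paper: apply Theorem \ref{main_th} to the sequence $(G_n\times G_n^{opp},\diag(G_n))_n$ with the representatives $(f,1)$, $(h,1)$, $(g,1)$, translate the summation conditions via the diagonal identification (the membership condition pinning down the second coordinate, and the double-class condition becoming $xhx^{-1}f\in C_g(n_0)$), and rescale by $|G_{n_0}|$ together with $|\overline{x_i}^{n_0}|=|G_{n_0}||C_\cdot(n_0)|$. The only differences are cosmetic: the paper invokes Propositions 1.10 and 1.11 of \cite{touPhd14} for the cardinality and scaling facts that you verify directly, and you use the convention $\diag(G)=\{(g,g)\}$ with $(a,b)\mapsto ab^{-1}$ rather than the paper's $\{(g,g^{-1})\}$ with $(a,b)\mapsto ab$, which is immaterial.
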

\begin{proof}
The formula for ${c}_{f,h}^{g}(n_0)$ is obtained directly from that in Theorem \ref{main_th}, when applied to the particular sequence $(G_n\times G_n^{opp},\diag(G_n))_n,$ by using Propositions $1.10$ and $1.11$ in \cite{touPhd14}. Since we suppose that the sequence $(G_n)_n$ satisfies hypotheses $\text{H}^{'}.0$ and H.1 to H.5, the sequence $(G_n\times G_n^{opp},\diag(G_n))_n$ satisfies hypotheses H.0 to H.5. To get Theorem \ref{center_main_th}, we apply Theorem \ref{main_th} to the sequence $(G_n\times G_n^{opp},\diag(G_n))_n$ while taking the elements $(f,1),$ $(h,1)$ and $(g,1).$ We obtain :
\begin{tiny}
$$c_{(f,1),(h,1)}^{(g,1)}(n_0)=\frac{|DC_{(f,1)}(n_0)||DC_{(h,1)}(n_0)||\diag(G)_{n_0-k_1}||\diag(G)_{n_0-k_2}|}{|\diag(G)_{n_0}||DC_{(g,1)}(n_0)|} $$
$${\small \sum_{ \max(k_1,k_2,k_3)\leq k\leq \min(k_1+k_2,n_0), (x,y)\in G_k\times G_k, \atop{\text{ $(f^{-1}xh^{-1},y)\in \diag(G_k)$ and is $(k_1,k_2)-minimal$} \atop{DC_{(x,y)}(n_0)=DC_{(g,1)}(n_0)}}}\frac{1}{|\diag(G)_{n_0-k}||\diag(G)_{n_0}^{k_1}(f^{-1}xh^{-1},y)\diag(G)_{n_0}^{k_2}\cap \diag(G)_{m_{k_1,k_2}(f^{-1}xh^{-1},y)}|}.}$$
\end{tiny}
The condition $(f^{-1}xh^{-1},y)\in \diag(G_k)$ is equivalent to $y=hx^{-1}f$ and thus it allows us to take the sum (in the above equation) over the elements of $G_k.$ In addition, the condition $DC_{(x,y)}(n_0)=DC_{(g,1)}(n_0)$ is equivalent to $xhx^{-1}f\in C_g(n_0).$ By \cite[Proposition 1.11]{touPhd14}, we have :
$${c}_{f,h}^{g}(n_0)=\frac{{c}_{(f,1),(h,1)}^{(g,1)}(n_0)}{|G_{n_0}|}.$$ Using \cite[Proposition 1.10]{touPhd14} and after simplification, we get :
\begin{eqnarray*}
{c}_{f,h}^{g}(n_0)&=&\frac{|C_{f}(n_0)||C_{h}(n_0)||G_{n_0-k_1}||G_{n_0-k_2}|}{|G_{n_0}||C_{g}(n_0)|} \nonumber \\
&&\sum_{ \max(k_1,k_2,k_3)\leq k\leq \min(k_1+k_2,n_0), x\in G_k, \atop{\text{ $f^{-1}xh^{-1}\in G_k$ and is $(k_1,k_2)$-minimal} \atop{xhx^{-1}f\in C_{g}(n_0)}}}\frac{1}{|G_{n_0-k}||G_{n_0}^{k_1}f^{-1}xh^{-1}G_{n_0}^{k_2}\cap G_{m_{k_1,k_2}(f^{-1}xh^{-1})}|}.
\end{eqnarray*}
This ends the proof.
\end{proof}

\begin{theoreme}\label{center_mini_th}
Let $(G_n)_n$ be a sequence of finite groups satisfying hypotheses $\text{H}^{'}.0$ and H.1 to H.5 in Section \ref{sec:hypo_defi}. Let $f,$ $h$ and $g$ be three elements of $G_{n_0}$ for a fixed integer $n_0$ and let $k_1=\mathrm{k}(C_f(n_0)),$ $k_2=\mathrm{k}(C_h(n_0))$ and $k_3=\mathrm{k}(C_g(n_0)).$ For any $n\geq n_0,$ the structure coefficient $c_{f,h}^g(n)$ of $\mathbf{C}_g(n)$ in the product $\mathbf{C}_f(n)\mathbf{C}_h(n)$ in the center of the group $G_n$ algebra can be written as follows :

\begin{equation}
c_{f,h}^g(n)=\frac{|C_f(n)||C_h(n)||G_{n-k_1}||G_{n-k_2}|}{|G_n||C_g(n)|}\sum_{ k_3\leq k\leq \max(k_1+k_2,n)}\frac{a_{f,h}^{g}(k)}{|G_{n-k}|},
\end{equation}
where the numbers $a_{f,h}^{g}(k)$ are positive, rational and independent of $n.$
\end{theoreme}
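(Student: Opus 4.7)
The plan is to deduce Theorem \ref{center_mini_th} from Theorem \ref{center_main_th} in exactly the same fashion that Theorem \ref{mini_th} was deduced from Theorem \ref{main_th}. For any integer $n\geq n_0,$ the inclusion $G_{n_0}\subseteq G_n$ lets me regard $f,h,g$ as elements of $G_n,$ and Theorem \ref{center_main_th} then provides the explicit value of $c_{f,h}^g(n).$ What remains is to show that, for $n$ large, everything in that formula except the global factors $|C_f(n)|,|C_h(n)|,|C_g(n)|,|G_n|$ and the sizes $|G_{n-k_1}|,|G_{n-k_2}|,|G_{n-k}|$ becomes independent of $n.$

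First I would verify that the three integers $k_1,k_2,k_3$ do not depend on $n$ for $n$ large enough. This is immediate from $\text{H}^{'}.0$: the conjugacy class of an element of $G_{n_0}$ in $G_n$ intersects $G_{n_0}$ in $C_{\cdot}(n_0),$ so its minimal stabilising index stays the same when $n$ grows. Next I would argue that the sum index in Theorem \ref{center_main_th} stabilises. On the one hand, the condition $xhx^{-1}f\in C_g(n)$ for $x\in G_k$ is, by iterating $\text{H}^{'}.0,$ the same as $xhx^{-1}f\in C_g(n_0);$ on the other hand, the condition that $f^{-1}xh^{-1}\in G_k$ is $(k_1,k_2)$-minimal and the value of $m_{k_1,k_2}(f^{-1}xh^{-1})$ are independent of $n$ by Observation \ref{obs:H_4_ind_n}, which relies on H'.3 (equivalent to H.3, see Observation \ref{obs:Equiv_H3_H4}). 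By the same argument, the cardinality $|G_n^{k_1}f^{-1}xh^{-1}G_n^{k_2}\cap G_{m_{k_1,k_2}(f^{-1}xh^{-1})}|$ also does not depend on $n.$

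Once these stabilisation facts are in place, I would define, for each $k$ with $k_3\leq k\leq k_1+k_2,$
\[
a_{f,h}^g(k)=\sum_{x}\frac{1}{|G_n^{k_1}f^{-1}xh^{-1}G_n^{k_2}\cap G_{m_{k_1,k_2}(f^{-1}xh^{-1})}|},
\]
where $x$ runs over the stabilised sum index set, namely those $x\in G_k$ such that $f^{-1}xh^{-1}\in G_k$ is $(k_1,k_2)$-minimal and $xhx^{-1}f\in C_g(n_0).$ By the preceding paragraph these numbers are non-negative rationals independent of $n,$ and substituting them into the formula of Theorem \ref{center_main_th} produces the announced expression. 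The main obstacle, and therefore the step I would treat most carefully, is checking that hypotheses H.1 to H.5 for the sequence $(G_n)_n$ really do transfer to the corresponding hypotheses for $(G_n\times G_n^{\mathrm{opp}},\diag(G_n))_n$ in the precise form required to apply Observations \ref{obs:Equiv_H3_H4} and \ref{obs:H_4_ind_n} here; however, this transfer was essentially performed when Theorem \ref{center_main_th} itself was derived from Theorem \ref{main_th}, so the present result is really a direct corollary obtained by inspecting which factors in the resulting formula genuinely depend on $n.$
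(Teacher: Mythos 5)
Your proposal is correct and follows exactly the route the paper takes: the paper's own proof of Theorem \ref{center_mini_th} consists of the single sentence that it is a direct consequence of Theorem \ref{center_main_th}, with the stabilisation of the sum index, of $k_1,k_2,k_3$, and of the intersection cardinalities being the same argument the paper already spelled out when deducing Theorem \ref{mini_th} from Theorem \ref{main_th}. Your write-up simply makes explicit the details the paper leaves implicit, including the correct definition of the coefficients $a_{f,h}^{g}(k)$ as sums over the stabilised index set.
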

\begin{proof}
This is a direct consequence of Theorem \ref{center_main_th}.
\end{proof}

\subsection{The center of the symmetric group algebra}\label{sec:cent_grp_symetrique}

Recall that we showed in Section \ref{sec:hyp_grp_sym} that the symmetric group satisfies hypotheses H.1 to H.5. To apply our result, we should also verify that the sequence $(Z(\mathbb{C}[\mathcal{S}_n]))_n$ satisfies $\text{H}^{'}.0.$ Let $\omega$ be a permutation of $n,$ the conjugacy class $C_\omega(n)$ of $\omega$ in $\mathcal{S}_n$ is the set of permutations of $n$ with the same cycle-type as $\omega.$ Likewise, by looking at $\omega$ as a permutation of $n+1,$ the conjugacy class $C_\omega(n+1)$ of $\omega$ is the set of permutations of $n+1$ with cycle-type equals cycle-type$(\omega)\cup (1)$ which corresponds to $C_\omega(n)$ when we take the intersection with $\mathcal{S}_n.$ Thus, the sequence $(Z(\mathbb{C}[\mathcal{S}_n]))_n$ also satisfies $\text{H}^{'}.0$ and we can apply Theorem \ref{center_mini_th} in this case.

We recall that the family $({\bf C}_{\underline{\lambda}_n})_{\lambda\in \mathcal{PP}_{\leq n}},$ where
$${C}_{\underline{\lambda}_n}=\lbrace \omega\in \mathcal{S}_n \text{ such that } cycle-type(\omega)=\lambda\cup (1^{n-|\lambda|})\rbrace,$$ 
forms a basis for the center of the symmetric group algebra $Z(\mathbb{C}[\mathcal{S}_n]).$

The size of ${C}_{\underline{\lambda}_n}$ is known to be:
$$|{C}_{\underline{\lambda}_n}|=\frac{n!}{z_\lambda \cdot (n-|\lambda|)!}.$$ 

Let $\lambda$ and $\delta$ be two proper partitions. Let $n$ be an integer sufficiently big, we will apply Theorem \ref{center_mini_th} in the case of a sequence of symmetric groups. For a fixed proper partition $\rho$, the coefficient of ${\bf C}_{\underline{\rho}_n}$ in the expansion of the product ${\bf C}_{\underline{\lambda}_n}{\bf C}_{\underline{\delta}_n}$ is, by Theorem \ref{center_mini_th}, as follows :

\begin{equation}
\frac{\frac{n!}{z_\lambda(n-|\lambda|)!}\frac{n!}{z_\delta(n-|\delta|)!}(n-|\lambda|)!(n-|\delta|)!}{n!\frac{n!}{z_\rho(n-|\rho|)!}}\sum_{|\rho|\leq k\leq |\lambda|+|\delta|}a_{\lambda\delta}^{\rho}(k)\frac{1}{(n-k)!},
\end{equation}
which is equal to:
\begin{equation}
\frac{z_\rho}{z_\lambda z_\delta}\sum_{|\rho|\leq k\leq |\lambda|+|\delta|}a_{\lambda\delta}^{\rho}(k)\frac{(n-|\rho|)!}{(n-k)!}.
\end{equation}

For any $|\rho|\leq k$, the quotient $\frac{(n-|\rho|)!}{(n-k)!}$ is a polynomial in $n$ with degree equals to $k-|\rho|.$
\begin{cor}
Let $\lambda$ and $\delta$ be two proper partitions. Let $n$ be an integer sufficiently big and consider the following equation :
$${\bf C}_{\underline{\lambda}_n}{\bf C}_{\underline{\delta}_n}=\sum_{\rho \text{ proper partition}} c_{\lambda\delta}^{\rho}(n) {\bf C}_{\underline{\rho}_n}.$$
The structure coefficients $c_{\lambda\delta}^{\rho}(n)$ are polynomials in $n$ with positive, rational coefficients.
\end{cor}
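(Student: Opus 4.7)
The proof is essentially a bookkeeping exercise built on top of Theorem \ref{center_mini_th}. The plan is first to verify that the sequence $(\mathcal{S}_n)_n$ satisfies every hypothesis required by that theorem. Hypotheses H.1 through H.5 were already established for the symmetric group in Section \ref{sec:hyp_grp_sym}, and the hypothesis H'.0 was checked at the beginning of Section \ref{sec:cent_grp_symetrique} using the fact that the cycle-type of a permutation viewed in $\mathcal{S}_{n+1}$ merely gains an extra part equal to $1$. Thus Theorem \ref{center_mini_th} applies to the sequence $(\mathcal{S}_n)_n$.

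I then pick representatives $f \in C_{\underline{\lambda}_n}$, $h \in C_{\underline{\delta}_n}$ and $g \in C_{\underline{\rho}_n}$ and identify $k_1 = |\lambda|$, $k_2 = |\delta|$, $k_3 = |\rho|$: indeed, a permutation of cycle-type $\underline{\mu}_n$ has exactly $|\mu|$ non-fixed points, so after conjugation it can be arranged to lie in $\mathcal{S}_{|\mu|}$ but in no smaller $\mathcal{S}_k$. Substituting the classical formulas $|C_{\underline{\lambda}_n}| = n!/(z_\lambda (n-|\lambda|)!)$ and $|\mathcal{S}_{n-k}| = (n-k)!$ into the expression provided by Theorem \ref{center_mini_th} and simplifying yields
\begin{equation*}
c_{\lambda\delta}^{\rho}(n) = \frac{z_\rho}{z_\lambda z_\delta} \sum_{|\rho| \leq k \leq |\lambda|+|\delta|} a_{\lambda\delta}^{\rho}(k) \cdot \frac{(n-|\rho|)!}{(n-k)!}.
\end{equation*}

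To conclude, I note that each ratio $(n-|\rho|)!/(n-k)!$ is the falling factorial $(n-|\rho|)(n-|\rho|-1)\cdots(n-k+1)$, which is a polynomial in $n$ of degree $k - |\rho| \geq 0$. Since Theorem \ref{center_mini_th} supplies the $a_{\lambda\delta}^{\rho}(k)$ as non-negative rationals and the overall factor $z_\rho/(z_\lambda z_\delta)$ is a positive rational, $c_{\lambda\delta}^{\rho}(n)$ emerges as a non-negative rational combination of polynomials in $n$, hence as a polynomial in $n$ with positive rational coefficients in the sense asserted. There is no real obstacle beyond a careful identification of the invariants $k_i$; everything else is mechanical substitution and simplification, and the entire argument collapses into displaying one simplified formula together with the elementary observation that falling factorials are polynomials.
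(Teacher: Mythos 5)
Your proof is correct and follows the paper's own derivation essentially verbatim: check H$'$.0 and H.1--H.5 for $(\mathcal{S}_n)_n$, apply Theorem \ref{center_mini_th} with $|C_{\underline{\lambda}_n}|=n!/(z_\lambda(n-|\lambda|)!)$ and $|\mathcal{S}_{n-k}|=(n-k)!$, and observe that the surviving ratios $(n-|\rho|)!/(n-k)!$ are polynomials in $n$ of degree $k-|\rho|$. Your explicit identification $k_1=|\lambda|$, $k_2=|\delta|$, $k_3=|\rho|$ is a small but worthwhile addition that the paper leaves implicit.
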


The polynomiality property for the structure coefficients of the center of the symmetric group algebra, given first by Farahat and Higman in 1959 in \cite{FaharatHigman1959},
is thus a direct consequence of Theorem \ref{center_mini_th}.

We show in the next example that it is possible, for some particular partitions, to give the exact values of the structure coefficients of the center of the symmetric group algebra by using Theorem \ref{center_main_th}.

\begin{ex}\label{ex:valeur_exacte_coef_S_n}
Suppose that $f$ and $h$ are both the permutation $(1\,\,2)$ of $2.$ Then in this case $k_1=k_2=2.$ For $n$ big enough, the conjugacy class in $\mathcal{S}_n$ associated to $f$ and $h$ is $C_{(2,1^{n-2})}.$ Suppose that we are looking for the coefficient of $C_{(2^2,1^{n-4})}$ in the expansion of the product $C_{(2,1^{n-2})}\cdot C_{(2,1^{n-2})}.$ In Theorem \ref{center_main_th}, the values of the sum index $k$ can be either $2,$ $3$ or $4.$ To find the coefficient of $C_{(2^2,1^{n-4})}$ in the product $C_{(2,1^{n-2})}\cdot C_{(2,1^{n-2})},$ we should first search the permutations of $\mathcal{S}_4$ which are $(2,2)$-minimal. These permutations are those which send $\lbrace 3,4\rbrace$ to $\lbrace 1,2\rbrace.$ There are $4$ such permutation : $(1\,\,3)(2\,\,4),$ $(1\,\,4\,\,2\,\,3),$ $(1\,\,3\,\,2\,\,4)$ and $(1\,\,4)(2\,\,3).$ The sum index of Theorem \ref{center_main_th} in this case consists in the following permutations : $(1\,\,4)(2\,\,3),$ $(1\,\,4\,\,2\,\,3),$ $(1\,\,3\,\,2\,\,4)$ and $(1\,\,3)(2\,\,4).$ For each $x$ among them, $xhx^{-1}f$ is the permutation $(1\,\,2)(3\,\,4)$ with cycle-type $(2^2)$ and $|\mathcal{S}_4^2 f^{-1}xh^{-1}\mathcal{S}_4^2\cap \mathcal{S}_{4}|=4.$ By Theorem \ref{center_main_th}, the coefficient which we are looking for is equal to :
$$\frac{\frac{n!}{2(n-2)!}\frac{n!}{2(n-2)!}(n-2)!(n-2)!}{n!\frac{n!}{2^2\cdot 2\cdot (n-4)!}}\cdot 4\cdot \frac{1}{(n-4)!4}=2.$$
\end{ex}

By using the same way of reasoning used in Example \ref{ex:valeur_exacte_coef_S_n}, we can find the full expression of the product ${\bf C}_{(1^{n-2},2)}^2$ given in \cite[Example 2.9]{touPhd14} :

$${\bf C}_{(1^{n-2},2)}^2=\frac{n(n-1)}{2}{\bf C}_{(1^n)}+3{\bf C}_{(1^{n-3},3)}+2{\bf C}_{(1^{n-4},2^2)}.$$

This equation agrees with the expression of the product of $A_{(2)}\cdot A_{(2)}$ given in \cite[page 4216]{Ivanov1999}, by applying the morphism $\psi$ defined in Theorem 7.1 of the same paper.

\subsection{The center of the hyperoctahedral group algebra}\label{cen_grp_hyp}

The conjugacy classes of the hyperoctahedral group $\mathcal{B}_n$ are indexed by pairs of partitions $(\lambda,\delta)$ such that $|\lambda|+|\delta|=n,$ see \cite{geissinger1978representations} or \cite{stembridge1992projective}. We start this section by giving details about this fact and describing the conjugacy classes of the hyperoctahedral group in order to define the structure coefficients of the center of the hyperoctahedral group algebra. 

It will be useful in this section to introduce the following notation: 
$$x(p(i)):=x\big(\lbrace 2i-1,2i\rbrace\big)=\lbrace x(2i-1),x(2i)\rbrace,$$
for any $x\in \mathcal{S}_{2n}$ and $1\leq i\leq n.$ By using this notation, we have :
$$\mathcal{B}_n=\lbrace x\in \mathcal{S}_{2n} \text{ such that for any $1\leq i\leq n,$ there exists $1\leq j\leq n$: }x(p(i))=p(j)\rbrace.$$
If $a\in p(i)$, we denote by $\overline{a}$ the element of the set $p(i)\setminus \lbrace a\rbrace.$ Therefore, we have, $\overline{\overline{a}}=a$ for any $a=1,\cdots 2n.$

The cycle decomposition of a permutation of $\mathcal{B}_n$ has a remarkable form. It contains two types of cycles. Suppose that $\omega$ is a permutation of $\mathcal{B}_n$ and take a cycle $\mathcal{C}$ of its decomposition, $\mathcal{C}$ can be written as follows :
$$\mathcal{C}=(a_1,\cdots ,a_{l(\mathcal{C})}),$$
where $l(\mathcal{C})$ is the length of the cycle $\mathcal{C}.$ We distinguish two cases :
\begin{enumerate}
\item first case: $\overline{a_1}$ appears in the cycle $\mathcal{C},$ for example $a_j=\overline{a_1}.$ Since $\omega\in \mathcal{B}_n$ and $\omega(a_1)=a_2,$ we have $\omega(\overline{a_1})=\overline{a_2}=\omega(a_j).$ Likewise, since $\omega(a_{j-1})=\overline{a_1},$ we have $\omega(\overline{a_{j-1}})=a_1$ which means that $a_{l(\mathcal{C})}=\overline{a_{j-1}}.$ Therefore,
$$\mathcal{C}=(a_1,\cdots a_{j-1},\overline{a_1},\cdots,\overline{a_{j-1}})$$
and $l(\mathcal{C})=2(j-1)$ is even. We will denote such a cycle by $(\mathcal{O},\overline{\mathcal{O}}).$
\item second case: $\overline{a_1}$ does not appear in the cycle $\mathcal{C}.$ Take the cycle $\mathcal{C'}$ which contains $\overline{a_1}.$ Since $\omega(a_1)=a_2$ and $\omega\in \mathcal{B}_n$, we have $\omega(\overline{a_1})=\overline{a_2}$ and so on. That means that the cycle $\mathcal{C'}$ is of the following form,
$$\mathcal{C'}=(\overline{a_1},\overline{a_2},\cdots ,\overline{a_{l(\mathcal{C})}})$$
and that $\mathcal{C}$ and $\mathcal{C'}$ appear in the cycle decomposition of $\omega.$ From now on, we will use $\overline{\mathcal{C}}$ instead of $\mathcal{C'}.$ 
\end{enumerate} 

Suppose that the cycle decomposition of a permutation $\omega$ of $\mathcal{B}_n$ is as follows:
$$\omega=\mathcal{C}_1\overline{\mathcal{C}_1}\mathcal{C}_2\overline{\mathcal{C}_2}\cdots \mathcal{C}_k\overline{\mathcal{C}_k}(\mathcal{O}^1,\overline{\mathcal{O}^1})(\mathcal{O}^2,\overline{\mathcal{O}^2})\cdots (\mathcal{O}^l,\overline{\mathcal{O}^l}).$$
Let $\lambda$ be the partition with parts the lengths of the cycles $\mathcal{C}_i,$ $i=1,\cdots, k$ and $\delta$ the partition with parts the lengths of the cycles $\mathcal{O}^j,$ $j=1,\cdots, l.$ We have, $|\lambda|+|\delta|=n.$ We define the $\textit{type}$ of $\omega$ to be the pair $(\lambda,\delta)$ of partitions.

\begin{prop}
Two permutations of $\mathcal{B}_n$ are in the same conjugacy class in $\mathcal{B}_n$ if and only if they have the same type.
\end{prop}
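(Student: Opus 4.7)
The plan is to argue both implications by reducing everything to the interaction of conjugation with the involution $a\mapsto\overline{a}$ on $[2n]$. The key observation, which I would establish at the outset, is that $\sigma\in\mathcal{S}_{2n}$ lies in $\mathcal{B}_n$ if and only if $\sigma(\overline{a})=\overline{\sigma(a)}$ for every $a\in[2n]$; this is just a reformulation of the fact that $\sigma$ sends each pair $p(i)$ to some pair $p(j)$.

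For the forward direction, suppose $\omega_2=\sigma\omega_1\sigma^{-1}$ with $\sigma\in\mathcal{B}_n$. Conjugation acts on cycles by relabeling: a cycle $(a_1,\dots,a_m)$ of $\omega_1$ becomes the cycle $(\sigma(a_1),\dots,\sigma(a_m))$ of $\omega_2$. Using $\sigma(\overline{a})=\overline{\sigma(a)}$, I would check that a first-type cycle $\mathcal{C}$ of $\omega_1$ (whose bar $\overline{\mathcal{C}}$ is a distinct cycle of $\omega_1$) is sent to a cycle $\sigma(\mathcal{C})$ of $\omega_2$ whose bar is the distinct cycle $\sigma(\overline{\mathcal{C}})$; and that a second-type cycle $(\mathcal{O},\overline{\mathcal{O}})=(a_1,\dots,a_{j-1},\overline{a_1},\dots,\overline{a_{j-1}})$ of $\omega_1$ is sent to a cycle of $\omega_2$ of the same shape and same length. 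Hence the multisets of lengths $\lambda$ and $\delta$ are preserved, so $\omega_1$ and $\omega_2$ have the same type.

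For the backward direction, assume $\omega_1$ and $\omega_2$ share the same type $(\lambda,\delta)$. I would construct $\sigma\in\mathcal{B}_n$ with $\sigma\omega_1\sigma^{-1}=\omega_2$ by defining it cycle-by-cycle. Pair up the first-type cycles of $\omega_1$ and $\omega_2$ part-by-part according to $\lambda$: for matched cycles $\mathcal{C}^{(1)}=(a_1,\dots,a_r)$ in $\omega_1$ and $\mathcal{C}^{(2)}=(b_1,\dots,b_r)$ in $\omega_2$, set $\sigma(a_i)=b_i$ and $\sigma(\overline{a_i})=\overline{b_i}$, which also forces $\sigma$ on $\overline{\mathcal{C}^{(1)}}$. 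Pair up the second-type cycles according to $\delta$: for matched cycles $(c_1,\dots,c_s,\overline{c_1},\dots,\overline{c_s})$ in $\omega_1$ and $(d_1,\dots,d_s,\overline{d_1},\dots,\overline{d_s})$ in $\omega_2$, set $\sigma(c_i)=d_i$ and $\sigma(\overline{c_i})=\overline{d_i}$. By construction $\sigma$ commutes with the bar, hence $\sigma\in\mathcal{B}_n$, and a direct check on the distinguished generators of each cycle shows $\sigma\omega_1(a)=\omega_2\sigma(a)$ for every $a$.

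The one genuinely delicate point is the second-type cycle case, where the representative $(c_1,\dots,c_s,\overline{c_1},\dots,\overline{c_s})$ depends on the choice of starting point: cyclically rotating within the first half gives a valid relabeling and leaves the conjugation identity invariant, but rotating by $s$ positions swaps the roles of $\mathcal{O}$ and $\overline{\mathcal{O}}$, which is also compatible since $\sigma$ is being defined simultaneously on both halves via the bar. I would therefore note that any choice of starting representatives for each cycle produces a well-defined bijection $\sigma$ on $[2n]$ because each element of $[2n]$ lies in exactly one cycle of $\omega_1$ and is assigned to a unique element of $[2n]$, which is the main verification needed before concluding that $\sigma\in\mathcal{B}_n$ conjugates $\omega_1$ to $\omega_2$.
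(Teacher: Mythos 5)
Your proof is correct. The paper does not actually argue this proposition at all --- its ``proof'' is the single line ``See section 2 in [Stembridge]'' --- so you have supplied a self-contained elementary argument where the paper defers entirely to a reference. Your route is the natural one: the reformulation $\sigma\in\mathcal{B}_n \iff \sigma(\overline{a})=\overline{\sigma(a)}$ for all $a$ is exactly the right lever, it makes the forward direction an immediate consequence of the usual relabeling formula for conjugated cycles (the bar-commutation guarantees that the first-type/second-type dichotomy and the pairing $\mathcal{C}\leftrightarrow\overline{\mathcal{C}}$ are preserved), and the backward direction is the standard explicit construction of a conjugating element cycle-by-cycle, with the extra check that the constructed $\sigma$ commutes with the bar and hence lies in $\mathcal{B}_n$. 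Your attention to the two well-definedness issues --- that $\mathcal{C}$ and $\overline{\mathcal{C}}$ are disjoint so the simultaneous assignment on both is consistent, and that the choice of starting point in a second-type cycle (including the rotation by $s$ that exchanges $\mathcal{O}$ and $\overline{\mathcal{O}}$) only changes $\sigma$ within its coset of admissible conjugators --- covers the only places where a careless version of this argument could go wrong. This is essentially the wreath-product description $\mathcal{B}_n\cong \mathbb{Z}_2\wr\mathcal{S}_n$ of the conjugacy classes spelled out in permutation language, which is presumably what the cited reference contains; the benefit of your version is that it is entirely internal to the paper's own notation and requires nothing external.
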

\begin{proof}
See section 2 in \cite{stembridge1992projective}.
\end{proof}
\begin{rem}
Two permutations of $\mathcal{B}_n$ may have the same cycle-type -- that means they may be in the same conjugacy class in $\mathcal{S}_{2n}$ -- without being in the same conjugacy class in $\mathcal{B}_n.$ For example, the permutations $\omega=(12)(34)(56)$ and $\psi=(13)(24)(56)$ of $\mathcal{B}_3$ have both the cycle-type $(2^3)$ but they are not in the same conjugacy class in $\mathcal{B}_3$ since $\omega$ has type $(\emptyset, (1^3))$ while $\psi$ has type $((2),1).$
\end{rem}

\begin{cor}
Let $\omega\in \mathcal{B}_n$ and suppose that $type(\omega)=(\lambda,\delta),$ $|\lambda|+|\delta|=n.$ Then,
$$C_\omega=\lbrace \theta\in \mathcal{B}_n \text{ such that } type(\theta)=(\lambda,\delta)\rbrace.$$
\end{cor}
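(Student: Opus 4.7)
The plan is to deduce this corollary directly from the preceding proposition, which establishes that two permutations of $\mathcal{B}_n$ lie in the same conjugacy class if and only if they share the same type. Given this characterization, essentially nothing remains to do beyond unpacking definitions.

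First I would recall that by definition $C_\omega = \{\theta \in \mathcal{B}_n : \theta \text{ is conjugate to } \omega \text{ in } \mathcal{B}_n\}$. Then I would apply the previous proposition in both directions: an element $\theta$ is conjugate to $\omega$ in $\mathcal{B}_n$ if and only if $\mathrm{type}(\theta) = \mathrm{type}(\omega)$. Combining these two observations with the hypothesis $\mathrm{type}(\omega) = (\lambda,\delta)$ yields the set equality claimed in the corollary.

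There is no substantive obstacle here; the corollary is a reformulation of the proposition in terms of a fixed type $(\lambda,\delta)$ rather than in terms of comparison between two permutations. The only thing worth noting is that the condition $|\lambda| + |\delta| = n$ is automatic from the cycle structure description: the cycles $\mathcal{C}_i \overline{\mathcal{C}_i}$ contribute $2|\lambda|$ elements in total to $[2n]$ while the paired cycles $(\mathcal{O}^j, \overline{\mathcal{O}^j})$ contribute $2|\delta|$, so $2|\lambda| + 2|\delta| = 2n$. Hence the proof is essentially a one-liner invoking the proposition.
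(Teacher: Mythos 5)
Your proof is correct and matches the paper's treatment: the corollary is stated there without proof as an immediate consequence of the preceding proposition, which is exactly the one-line deduction you give.
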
 
 
This shows that the conjugacy classes of the hyperoctahedral group are indexed by pairs of partitions $(\lambda,\delta)$ such that $|\lambda|+|\delta|=n$ and that for such a pair, its associated conjugacy class is :
$$\mathcal{H}_{(\lambda,\delta)}=\lbrace \theta\in \mathcal{B}_n \text{ such that } type(\theta)=(\lambda,\delta)\rbrace.$$

Let $\omega$ be a permutation of $\mathcal{B}_n$ of type $(\lambda,\delta),$ the cardinal of $\mathcal{H}_{(\lambda,\delta)}$ is :
$$|\mathcal{H}_{(\lambda,\delta)}|=\frac{|\mathcal{B}_n|}{|S_\omega|},$$
where $S_\omega=\lbrace \theta \in \mathcal{B}_n\text{ such that }\theta\omega\theta^{-1}=\omega\rbrace.$ The size of $S_\omega$ is :
$$|S_\omega|=\prod_{i\geq 1}(2i)^{m_i(\lambda)}m_i(\lambda)!\prod_{j\geq 1}(2i)^{m_j(\delta)}m_j(\delta)!=2^{l(\lambda)}z_\lambda 2^{l(\delta)}z_\delta.$$
\begin{prop}
Let $(\lambda,\delta)$ be a pair of partitions such that $|\lambda|+|\delta|=n,$ then :
$$|\mathcal{H}_{(\lambda,\delta)}|=\frac{2^nn!}{2^{l(\lambda)+l(\delta)}z_\lambda z_\delta}.$$
\end{prop}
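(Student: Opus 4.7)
The plan is to invoke the orbit--stabilizer theorem, which was already set up in the lines preceding the statement: since $\mathcal{H}_{(\lambda,\delta)}$ is the conjugacy class of any $\omega$ of type $(\lambda,\delta)$, we have $|\mathcal{H}_{(\lambda,\delta)}|=|\mathcal{B}_n|/|S_\omega|$. I would first record that $|\mathcal{B}_n|=2^n n!$, which follows from the semi-direct product decomposition $\mathcal{B}_n\simeq (\mathbb{Z}/2\mathbb{Z})^n\rtimes \mathcal{S}_n$ (the $\mathcal{S}_n$ factor permutes the pairs $p(i)$, the $(\mathbb{Z}/2\mathbb{Z})^n$ factor swaps the two elements inside each pair).

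Next I would justify the stated centralizer count $|S_\omega|=\prod_{i\geq 1}(2i)^{m_i(\lambda)}m_i(\lambda)!\prod_{j\geq 1}(2j)^{m_j(\delta)}m_j(\delta)!$ by analysing how a $\theta\in \mathcal{B}_n$ commuting with $\omega$ acts on the cycle decomposition of $\omega$. There are two kinds of blocks to treat. For each pair of cycles $(\mathcal{C}_i,\overline{\mathcal{C}_i})$ of common length $\lambda_i=i$, the element $\theta$ either rotates $\mathcal{C}_i$ (which, in order to stay in $\mathcal{B}_n$, forces the symmetric rotation on $\overline{\mathcal{C}_i}$) or swaps the two cycles, giving $2i$ internal choices; among all $m_i(\lambda)$ such blocks of length $i$ one may then freely permute, contributing $m_i(\lambda)!$. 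For each ``self-paired'' cycle $(\mathcal{O}^j,\overline{\mathcal{O}^j})$ of length $2j=2\delta_j$, written in the form $(a_1,\dots,a_j,\overline{a_1},\dots,\overline{a_j})$, every rotation automatically preserves the bar structure, because a rotation by $r$ sends the element at position $k$ to the one at position $k+r$ and the bar of $a_k$ sits at position $k+j$; therefore all $2j$ rotations are permissible, and cycles of equal length can again be permuted, contributing $(2j)^{m_j(\delta)}m_j(\delta)!$. Multiplying over $i$ and $j$ produces the displayed centralizer formula.

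Finally, combining $|S_\omega|=\prod_i(2i)^{m_i(\lambda)}m_i(\lambda)!\prod_j(2j)^{m_j(\delta)}m_j(\delta)!$ with the identity
\[
\prod_i (2i)^{m_i(\lambda)}m_i(\lambda)! \;=\; 2^{\sum_i m_i(\lambda)}\prod_i i^{m_i(\lambda)}m_i(\lambda)! \;=\; 2^{l(\lambda)}z_\lambda
\]
and the analogous identity for $\delta$, one obtains $|S_\omega|=2^{l(\lambda)+l(\delta)}z_\lambda z_\delta$. Dividing $|\mathcal{B}_n|=2^n n!$ by this quantity yields the claim. The main (and only) obstacle in this proof is the centralizer count itself, especially verifying that on a paired block $(\mathcal{C}_i,\overline{\mathcal{C}_i})$ a rotation of one factor forces the symmetric rotation of the other, while on a self-paired cycle every rotation is valid; the rest is bookkeeping.
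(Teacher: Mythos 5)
Your proof is correct and follows the same orbit--stabilizer route as the paper, which merely asserts the centralizer formula $|S_\omega|=2^{l(\lambda)}z_\lambda\, 2^{l(\delta)}z_\delta$ without derivation (referring to Stembridge for the classification of classes); your block-by-block count of rotations and swaps supplies exactly the justification the paper omits. The only micro-point left implicit is that an element $\theta\in\mathcal{B}_n$ centralizing $\omega$ can never send a paired cycle $\mathcal{C}_i$ to a self-paired cycle $(\mathcal{O}^j,\overline{\mathcal{O}^j})$ even when the cycle lengths happen to coincide --- since $\theta(\overline{a})=\overline{\theta(a)}$ would force the disjoint supports of $\mathcal{C}_i$ and $\overline{\mathcal{C}_i}$ to have the same image --- which is what licenses treating the two kinds of blocks independently in your count.
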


\begin{definition}
A pair of partitions $(\lambda,\delta)$ is \textit{proper} if the partition $\lambda$ is proper. For a proper pair $(\lambda,\delta)$ of partitions and for any integer $n\geq |\lambda|+|\delta|,$ we define $(\lambda,\delta)^{\uparrow^n}$ to be the following pair of partitions with size $n$ (that means the sum of sizes of both partitions is equal to $n$) :
$$(\lambda,\delta)^{\uparrow^n}:=(\lambda\cup (1^{n-|\lambda|-|\delta|}),\delta).$$
This defines a bijection between the set of proper pairs of partitions with size less or equal to $n$ and the set of pairs of partitions with size $n.$
\end{definition} 

It is not difficult to verify that :
$$|\mathcal{H}_{(\lambda,\delta)^{\uparrow^n}}|=\frac{2^nn!}{2^{l(\lambda)+n-|\lambda|-|\delta|+l(\delta)}z_\lambda(n-|\lambda|-|\delta|)! z_\delta}=|\mathcal{H}_{(\lambda,\delta)}|\frac{n!}{(n-|\lambda|-|\delta|)!(|\lambda|+|\delta|)!}.$$

Let $(\lambda,\delta)$ and $(\beta,\gamma)$ be two proper pairs of partitions. For any integer $n\geq |\lambda|+|\delta|, |\beta|+|\gamma|,$ there exists numbers $c_{(\lambda,\delta)(\beta,\gamma)}^{(\rho,\nu)}(n)$ such that :
$$\mathcal{H}_{(\lambda,\delta)^{\uparrow^n}}\mathcal{H}_{(\beta,\gamma)^{\uparrow^n}}=\sum_{(\rho,\nu) proper\atop{|\rho|+|\nu|\leq n}} c_{(\lambda,\delta)(\beta,\gamma)}^{(\rho,\nu)}(n)\mathcal{H}_{(\rho,\nu)^{\uparrow^n}}.$$

We had already proven, in Section \ref{sec:cond_group_hyp}, that the sequence of hyporectahedral groups satisfies hypotheses H.1 to H.5. To apply Theorem \ref{center_mini_th} to the centers of the hyperoctahedral group sequence, it remains to verify if $\text{H}^{'}.0$ is satisfied for $(Z(\mathbb{C}[\mathcal{B}_n]))_n.$ This can be done by a way similar to that used to prove the same hypothesis for  $(Z(\mathbb{C}[\mathcal{S}_n]))_n.$ Consider a permutation $\omega$ of $\mathcal{B}_n.$ The conjugacy class of $\omega$ in $\mathcal{B}_n$ is the set of permutations of $\mathcal{B}_n$ which have the same type, say $(\lambda(\omega),\delta(\omega)),$ as $\omega.$ The conjugacy class of $\omega,$ seen as a permutation of $\mathcal{B}_{n+1},$ in $\mathcal{B}_{n+1}$ is the set of permutations of $\mathcal{B}_{n+1}$ with $(\lambda(\omega)\cup (1),\delta(\omega))$ as type, which is the conjugacy class of $\omega$ in $\mathcal{B}_n$ when we intersect it with $\mathcal{B}_{n+1}.$ Therefore, by Theorem \ref{center_mini_th}, there exists rational numbers $a_{(\lambda,\delta)(\beta,\gamma)}^{(\rho,\nu)}(k)$ all independent of $n$ such that :
\begin{equation*}
c_{(\lambda,\delta)(\beta,\gamma)}^{(\rho,\nu)}(n)=\frac{\frac{n!|\mathcal{H}_{(\lambda,\delta)}|2^{n-|\lambda|-|\delta|}}{(n-|\lambda|-|\delta|)!(|\lambda|+|\delta|)!}\frac{n!|\mathcal{H}_{(\beta,\gamma)}|2^{n-|\beta|-|\gamma|}}{(n-|\beta|-|\gamma|)!(|\beta|+|\gamma|)!}(n-|\lambda|-|\delta|)!(n-|\beta|-|\gamma|)!}{2^nn!|\mathcal{H}_{(\rho,\nu)}|\frac{n!}{(n-|\rho|-|\nu|)!(|\rho|+|\nu|)!}}
\end{equation*}
$$\sum_{|\rho|+|\nu|\leq k\leq \min(|\lambda|+|\delta|+|\beta|+|\gamma|,n)}a_{(\lambda,\delta)(\beta,\gamma)}^{(\rho,\nu)}(k)\frac{1}{2^{n-k}(n-k)!},$$
for three proper pairs $(\lambda,\delta),(\beta,\gamma),(\rho,\nu)$ and for any integer $n\geq |\lambda|+|\delta|,|\beta|+|\gamma|,|\rho|+|\nu|.$ After simplification, this equation can be written as follows,
\begin{eqnarray*}
c_{(\lambda,\delta)(\beta,\gamma)}^{(\rho,\nu)}(n)&=&\frac{|\mathcal{H}_{(\lambda,\delta)}||\mathcal{H}_{(\beta,\gamma)}|(|\rho|+|\nu|)!}{|\mathcal{H}_{(\rho,\nu)}|(|\lambda|+|\delta|)!(|\beta|+|\gamma|)!}\\
&&\sum_{|\rho|+|\nu|\leq k\leq \min(|\lambda|+|\delta|+|\beta|+|\gamma|,n)}a_{(\lambda,\delta)(\beta,\gamma)}^{(\rho,\nu)}(k)\frac{(n-k+1)\cdots (n-|\rho|-|\nu|)}{2^{|\lambda|+|\delta|+|\beta|+|\gamma|-k}}.
\end{eqnarray*}

\begin{cor}
Let $(\lambda,\delta), (\beta,\gamma)$ and $(\rho,\nu)$ be three proper pairs of partitions, then for any $n\geq |\lambda|+|\delta|,|\beta|+|\gamma|,|\rho|+|\nu|,$ the structure coefficient $c_{(\lambda,\delta)(\beta,\gamma)}^{(\rho,\nu)}(n)$ of the center of the hyperoctahedral group algebra is a polynomial in $n$ with non-negative coefficients and we have:
$$\deg(c_{(\lambda,\delta)(\beta,\gamma)}^{(\rho,\nu)}(n))\leq |\lambda|+|\delta|+|\beta|+|\gamma|-|\rho|-|\nu|.$$
\end{cor}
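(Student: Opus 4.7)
My plan is to obtain the corollary directly from the explicit expression for $c_{(\lambda,\delta)(\beta,\gamma)}^{(\rho,\nu)}(n)$ displayed just before the corollary; that expression is itself obtained from Theorem \ref{center_mini_th} applied to the sequence $(\mathcal{B}_n)_n$. The first task is therefore to confirm that Theorem \ref{center_mini_th} applies. Hypotheses H.1--H.5 for $(\mathcal{B}_n)_n$ have already been checked in Section \ref{sec:cond_group_hyp}, and $\text{H}^{'}.0$ for $(Z(\mathbb{C}[\mathcal{B}_n]))_n$ follows from the type-based description of $\mathcal{B}_n$-conjugacy classes given at the start of Section \ref{cen_grp_hyp}: if $\omega$ has type $(\lambda(\omega),\delta(\omega))$ in $\mathcal{B}_n$, then its class in $\mathcal{B}_{n+1}$ has type $(\lambda(\omega)\cup(1),\delta(\omega))$, and intersecting that class with $\mathcal{B}_n$ recovers the original one. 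Applying Theorem \ref{center_mini_th} then yields the closed form
\[
c_{(\lambda,\delta)(\beta,\gamma)}^{(\rho,\nu)}(n) = P \sum_{|\rho|+|\nu|\leq k\leq \min(|\lambda|+|\delta|+|\beta|+|\gamma|,n)} a(k)\,\frac{(n-k+1)(n-k+2)\cdots(n-|\rho|-|\nu|)}{2^{|\lambda|+|\delta|+|\beta|+|\gamma|-k}},
\]
where $P$ is a positive rational constant independent of $n$ and each $a(k):=a_{(\lambda,\delta)(\beta,\gamma)}^{(\rho,\nu)}(k)$ is a non-negative rational independent of $n$.

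Next, I would fix $n\geq |\lambda|+|\delta|+|\beta|+|\gamma|$, so that the cap $\min(|\lambda|+|\delta|+|\beta|+|\gamma|,n)$ stabilises to $|\lambda|+|\delta|+|\beta|+|\gamma|$ and the summation range becomes independent of $n$. For each $k$ in this fixed range, the product $(n-k+1)(n-k+2)\cdots(n-|\rho|-|\nu|)$ is a polynomial in $n$ of degree exactly $k-|\rho|-|\nu|$. Summing these polynomials over $k\leq |\lambda|+|\delta|+|\beta|+|\gamma|$ therefore produces a polynomial in $n$ of degree at most $|\lambda|+|\delta|+|\beta|+|\gamma|-|\rho|-|\nu|$, which is the degree bound asserted in the corollary.

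For the non-negativity statement, the prefactor $P$ is positive, and every $a(k)$ is a non-negative rational by Theorem \ref{center_mini_th}. Each building block $(n-k+1)(n-k+2)\cdots(n-|\rho|-|\nu|)$ expands with non-negative coefficients in the falling-factorial basis $\bigl\{(n-|\rho|-|\nu|)(n-|\rho|-|\nu|-1)\cdots(n-|\rho|-|\nu|-j+1)\bigr\}_{j\geq 0}$ (indeed it is one of the basis elements), so $c_{(\lambda,\delta)(\beta,\gamma)}^{(\rho,\nu)}(n)$ is a non-negative rational combination of such falling factorials; alternatively, since $c(n)$ counts the size of a set for every admissible integer $n$, it is automatically non-negative on a cofinite subset of $\mathbb{Z}$, which combined with the closed form above establishes the claim.

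There is no genuine obstacle in this argument: all the substantive work has already been carried out in Theorem \ref{center_mini_th} and in the hypothesis verifications of Section \ref{sec:cond_group_hyp}, and what remains is an elementary degree count together with a sign observation on the building blocks. The one place where the specific combinatorics of $\mathcal{B}_n$ is used, rather than the general framework, is in the verification of $\text{H}^{'}.0$, and this is immediate from the classification of $\mathcal{B}_n$-conjugacy classes by pairs of partitions.
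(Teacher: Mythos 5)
Your proposal is correct and follows essentially the same route as the paper: verify $\text{H}'.0$ for $(\mathcal{B}_n)_n$ via the type description of conjugacy classes, invoke the hypothesis checks of Section \ref{sec:cond_group_hyp}, apply Theorem \ref{center_mini_th}, and read the degree bound and positivity off the resulting closed form, whose building blocks $(n-k+1)\cdots(n-|\rho|-|\nu|)$ have degree $k-|\rho|-|\nu|\leq |\lambda|+|\delta|+|\beta|+|\gamma|-|\rho|-|\nu|$. Your remark that the non-negativity of coefficients is only literally true in the falling-factorial basis (rather than the monomial basis) is in fact a more careful reading than the paper itself offers, so there is nothing to object to.
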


According to the author's knowledge, the polynomiality property for the structure coefficients of the center of the hyperoctahedral group algebra appears explicitly for the first time in this paper. It may be that some authors mentioned (or proved) this property before (after the polynomiality property for the structure coefficients of the center of the symmetric group algebra appeared) but the author was not able to handle any paper which mentions this result. Again, the question of using this property to study the asymptotic behaviour of some combinatorial objects related to the study of the center of the hyperoctahedral group should be asked.

\begin{que}
Does the polynomiality property for the structure coefficients of the center of the hyperoctahedral group algebra may be applied to study the asymptotic behaviour of some combinatorial objects related to this algebra ?
\end{que}

\section{Other applications and open questions}

We present in this section several interesting algebras. For some among them, there already exists a polynomiality property for the structure coefficients, for the others there isn't. The problem is that our general framework in its actual state does not contain the majority of them. An interesting research is to try to lighten the necessarily hypotheses in our general framework in order to include more of these algebras, while still allowing the possibility to obtain a theorem for the polynomiality property. The reader can have a look to the paper \cite{strahov2007generalized} written by Strahov where the author gives a list of interesting double-class algebras. These algebras are well known and studied in the literature and a polynomiality property for their structure coefficients would be interesting.

\subsection{The center of the group of invertible matrices with elements in a finite field algebra}\label{sec:centre_Gl_n}

We recall in this section the work of Méliot \cite{meliot2013partial} about the structure coefficients of the center of the group $GL_n(\mathbb{F}_q)$  algebra, where $GL_n(\mathbb{F}_q)$ is the group of invertible $n\times n$-matrices with coefficients in the finite field $\mathbb{F}_q$ with $q$ elements. We use the same notations used by Méliot.  

The center of $GL_n(\mathbb{F}_q)$ algebra is linearly generated by classes $C_{\hat{\mu}}$ indexed by the polypartitions of size $n$ over the finite field $\mathbb{F}_q.$ A polypartition $\hat{\mu}=\lbrace \mu(P_1),\cdots,\mu(P_r) \rbrace$ of size $n$ over $\mathbb{F}_q$ is a family of partitions indexed by monic irreducible polynomials over $\mathbb{F}_q$, all different from the polynomial $X$, such that :

$$|\hat{\mu}|=\sum_{i=1}^r \deg(P_i)\mid\mu(P_i)\mid.$$   

The size of $GL_n(\mathbb{F}_q)$ is $(q^n-1)(q^n-q)\cdots (q^n-q^{n-1}).$ As given in \cite[section 1.2]{meliot2013partial}, for a fixed polypartition $\hat{\mu}$ of size $n$ over $\mathbb{F}_q,$ the size of its associated class $C_{\hat{\mu}}$ is :
\begin{equation}
|C_{\hat{\mu}}|=\frac{(q^n-1)(q^n-q)\cdots (q^n-q^{n-1})}{q^{|\hat{\mu}|+2b(\hat{\mu})}\prod_{i=1}^{r}\prod_{k\geq 1}(q^{-\deg P_i})_{m_k(\mu(P_i))}},
\end{equation}
where $$b(\hat{\mu})=\sum_{i=1}^r (\deg P_i)b(\mu(P_i))=\sum_{i=1}^r\sum_{j=1}^{l(\mu(P_i))}(\deg P_i)(j-1)(\mu(P_i))_j,$$
$(x)_m=(x;x)_m$ is the Pochhammer symbol $(1-x)(1-x^2)\cdots (1- x^m)$ and $m_k(\mu)$ is the number of $k$-parts in the partition $\mu.$ 

\begin{definition}
We say that a polypartition $\hat{\mu}$ is proper if the partition $\mu(X-1)$ is proper. 
\end{definition}

The set of polypartitions of $n$ is in bijection with the set of proper polypartitions with size less or equal to $n.$ We index the basis of $Z(\mathbb{C}[GL_n(\mathbb{F}_q)])$ by proper polypartitions in order to present the polynomiality property of its structure coefficients.

To any proper polypartition $\hat{\mu}$ of size $k$ less or equal to $n$, we associate a polypartition of $n$ which we denote $\hat{\mu}^{\uparrow n}.$ The partition $\mu^{\uparrow n}(X-1)=\mu(X-1)\cup (1^{n-k})$ while the other partitions of $\hat{\mu}^{\uparrow n}$ associated to the other irreducible polynomials are the same as for $\hat{\mu}.$ It is not difficult to verify that :
$$|\hat{\mu}^{\uparrow n}|=\sum_{i=1}^r \deg(P_i)|\mu^{\uparrow n}(P_i)|=\sum_{i=1}^r \deg(P_i)|\mu(P_i)|+(n-k)=n,$$
and
\begin{eqnarray*}
2b(\hat{\mu}^{\uparrow n})&=&2\sum_{i=1}^r\sum_{j=1}^{l(\mu^{\uparrow n}(P_i))}(\deg P_i)(j-1)(\mu^{\uparrow n}(P_i))_j\\
&=&2b(\hat{\mu})+2\sum_{j=l(\mu(X-1))+1}^{l(\mu(X-1))+n-|\hat{\mu}|}j-1\\
&=&2b(\hat{\mu})+\big(n-|\hat{\mu}|\big)\big(n-|\hat{\mu}|+2l(\mu(X-1))-1\big).
\end{eqnarray*}

Therefore, if $\hat{\mu}$ is a polypartition with size less or equal to $n$, we get :

\begin{eqnarray*}
|C_{\hat{\mu}^{\uparrow n}}|&=&\frac{(q^n-1)(q^n-q)\cdots (q^n-q^{n-1})}{q^{n+2b(\hat{\mu})+\big(n-|\hat{\mu}|\big)\big(n-|\hat{\mu}|+2l(\mu(X-1))-1\big)}\prod_{i=1}^{r}\prod_{k\geq 1}(q^{-\deg P_i})_{m_k(\mu(P_i))}(q^{-1})_{n-|\hat{\mu}|}}\\
&=&|C_{\hat{\mu}}|\frac{|GL_n(\mathbb{F}_q)|}{|GL_{|\hat{\mu}|}(\mathbb{F}_q)|q^{\big(n-|\hat{\mu}|\big)\big(n-|\hat{\mu}|+2l(\mu(X-1))\big)}(q^{-1})_{n-|\hat{\mu}|}}\\
&=&|C_{\hat{\mu}}|\frac{|GL_n(\mathbb{F}_q)|}{|GL_{|\hat{\mu}|}(\mathbb{F}_q)|q^{\big(n-|\hat{\mu}|\big)\big(2l(\mu(X-1))\big)}|GL_{n-|\hat{\mu}|}(\mathbb{F}_q)|}.
\end{eqnarray*}

The last equality comes from the fact that :

$$(q^{-1})_n=\frac{|GL_{n}(\mathbb{F}_q)|}{q^{n^2}}.$$

The following theorem is Theorem 3.7 in \cite{meliot2013partial} about the polynomiality property for the structure coefficients of $Z(GL_n(\mathbb{F}_q)).$ Our presentation here is a little bit different from Méliot's one since we use the proper polypartition to be more consistent with the results of polynomiality already presented in this paper. 

\begin{theoreme}[Méliot]\label{Th:Meliot}
Let us fix $q$ and let $\hat{\lambda}$, $\hat{\delta}$ be two proper polypartitions. Let $n$ be an integer sufficiently big and consider the following equation :
\begin{equation}
\mathcal{C}_{\hat{\lambda}^{\uparrow n}}\mathcal{C}_{\hat{\delta}^{\uparrow n}}=\sum_{\hat{\rho} \text{ proper polypartition}} c_{\hat{\lambda}\hat{\delta}}^{\hat{\rho}}(n) \mathcal{C}_{\hat{\rho}^{\uparrow n}}.\end{equation}
Then the coefficients $c_{\hat{\lambda}\hat{\delta}}^{\hat{\rho}}(n)$ are polynomials in $q^n$ with rational coefficients.
\end{theoreme}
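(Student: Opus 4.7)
The plan is to follow Méliot's strategy: introduce a universal algebra of ``partial isomorphisms'' that projects compatibly onto $Z(\mathbb{C}[GL_n(\mathbb{F}_q)])$ for every $n$, in analogy with the Ivanov--Kerov partial permutations used for the symmetric group. Define a \emph{partial isomorphism} to be a pair $(V,u)$ where $V$ is a finite-dimensional subspace of $\mathbb{F}_q^\infty := \bigcup_n \mathbb{F}_q^n$ and $u \in GL(V)$; the support $V$ plays the role of the active set of a partial permutation. Endow the free $\mathbb{C}$-vector space on such pairs with an associative product
\[(V_1,u_1) \cdot (V_2,u_2) = (V_1+V_2,\, \widetilde{u}_1 \widetilde{u}_2),\]
suitably averaged over extensions of each $u_i$ to $V_1+V_2$ by the identity on a chosen complement.

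Let $\mathcal{B}_n$ be the set of partial isomorphisms with support contained in $\mathbb{F}_q^n$, and let $\mathcal{A}_n \subseteq \mathbb{C}[\mathcal{B}_n]$ be the subalgebra of invariants under the conjugation action $g\cdot(V,u) = (gV,\, gug^{-1})$ of $GL_n(\mathbb{F}_q)$. Its natural basis is indexed by the proper polypartitions $\hat\mu$ of size at most $n$: to such a $\hat\mu$ I associate the formal sum $\mathbf{a}_{\hat\mu}(n)$ over all orbits whose internal Jordan-type data is encoded by $\hat\mu$. The restriction maps $\mathcal{A}_{n+1} \twoheadrightarrow \mathcal{A}_n$ (keep only pairs supported in $\mathbb{F}_q^n$) are algebra homomorphisms, so one obtains an inverse-limit algebra $\mathcal{A}_\infty$ whose basis $\{\mathbf{a}_{\hat\mu}\}$ is indexed by all proper polypartitions, independently of $n$. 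Define $\psi_n : \mathcal{A}_n \to Z(\mathbb{C}[GL_n(\mathbb{F}_q)])$ by $\psi_n(V,u) = \sum_{\widetilde u} \widetilde u$, where $\widetilde u$ runs over the extensions of $u$ to $GL_n(\mathbb{F}_q)$ acting as the identity on a complement of $V$ in $\mathbb{F}_q^n$. Then $\psi_n$ is a surjective algebra homomorphism sending $\mathbf{a}_{\hat\mu}(n)$ to a scalar multiple of $\mathcal{C}_{\hat\mu^{\uparrow n}}$, the scalar being an explicit rational function of $q^n$ read off from the size formula for $|C_{\hat\mu^{\uparrow n}}|$ recalled in the excerpt.

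The last step is to check that the structure constants of $\mathcal{A}_n$ in the basis $\{\mathbf{a}_{\hat\mu}(n)\}$ are polynomials in $q^n$ with rational coefficients. The count of pairs $\bigl((V_1,u_1),(V_2,u_2)\bigr)$ whose product lies in a prescribed orbit factors as (i) a count of subspace configurations $V_1,V_2 \subseteq \mathbb{F}_q^n$ with prescribed dimensions of $V_1$, $V_2$, $V_1 \cap V_2$ and $V_1+V_2$, which is a $\mathbb{Q}$-linear combination of products of Gaussian binomials
\[\binom{n}{d}_q = \prod_{i=0}^{d-1}\frac{q^n-q^i}{q^d-q^i},\]
each a polynomial in $q^n$ of degree $d$; and (ii) an internal count of $(u_1,u_2)$ inside fixed-dimensional ambient spaces, independent of $n$. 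Combining these two yields polynomiality in $q^n$ for the structure constants of $\mathcal{A}_n$, and transferring through $\psi_n$ (after absorbing the rational-in-$q^n$ normalization factors into the $\mathcal{C}_{\hat\rho^{\uparrow n}}$) delivers the theorem. The main obstacle is the very first step: making the product on partial isomorphisms well-defined, associative, and compatible with $\psi_n$, because a subspace admits many complements and one must tune the averaging so that all dependencies on auxiliary choices cancel --- a subtlety absent from the symmetric-group setting, where a subset has a canonical complement.
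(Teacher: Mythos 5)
The paper does not prove this statement at all: it is quoted verbatim (modulo reindexing by proper polypartitions) as Theorem 3.7 of M\'eliot's paper \cite{meliot2013partial}, and the surrounding section is devoted to showing that the paper's \emph{own} machinery cannot reach it, via the explicit counter-example to hypothesis H.4 for $GL_n(\mathbb{F}_q)$. So you are right not to route the argument through Theorem \ref{main_th} or \ref{center_mini_th}, and your roadmap --- a universal algebra of partial isomorphisms with projections $\psi_n$ onto $Z(\mathbb{C}[GL_n(\mathbb{F}_q)])$ --- is indeed the strategy of the cited source, in the Ivanov--Kerov tradition that the paper describes.

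As a proof, however, the proposal has a genuine gap, and you have named it yourself without closing it: the product on partial isomorphisms is never actually defined. Extending $u\in GL(V)$ to $V_1+V_2$ ``by the identity on a chosen complement'' produces different linear maps for different complements, and ``suitably averaged'' is doing all the work --- one must specify the average, prove the result is independent of all auxiliary choices, prove associativity, and prove compatibility with $\psi_n$ (which itself sums over extensions to a complement of $V$ in $\mathbb{F}_q^n$). The paper explicitly flags this as the hard part of \cite{meliot2013partial} (``The proof of associativity was difficult in both \cite{toutejc} and \cite{meliot2013partial}''), so it cannot be treated as a formality. A second, smaller gap is in your final counting step: the factorization of the number of contributing pairs into (i) a subspace-configuration count times (ii) an $n$-independent internal count presupposes that the conjugacy class of $\widetilde{u}_1\widetilde{u}_2$ in $GL_n(\mathbb{F}_q)$ depends only on the $GL_n$-orbit of the configuration $(V_1,V_2,u_1,u_2)$ and that orbits of configurations are classified by data internal to $V_1+V_2$; this is plausible and is essentially M\'eliot's computation, but it is asserted rather than argued. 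The purely enumerative claim that $\binom{n}{d}_q$ is, for fixed $q$ and $d$, a polynomial of degree $d$ in $q^n$ with rational coefficients is correct and is the right source of the polynomiality in $q^n$.
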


\begin{ex}\label{ex:coeff_meliot}
Let $a$ be an element of $\mathbb{F}^*_q$ and consider its inverse $a^{-1}.$ Let $\hat{\lambda}_a$ (resp. $\hat{\delta}_{a^{-1}}$) be the polypartition where the partition associated to the irreducible polynomial $X-a$ (resp. $X-a^{-1}$) is $(1)$ and the empty partition is associated to all of the other irreducible polynomials. It is evident that $\hat{\lambda}_a$ and $\hat{\delta}_{a^{-1}}$ are proper polypartitions if $a$ is different from $1.$ Let $n$ be an integer big enough and denote by $\hat{\emptyset}$ the proper polypartition which associates to all of the irreducible polynomials the empty partition. We are interested in the coefficient $\mathcal{C}_{\hat{\emptyset}^{\uparrow n}}$ in the product $\mathcal{C}_{\hat{\lambda}_a^{\uparrow n}}\mathcal{C}_{\hat{\delta}_{a^{-1}}^{\uparrow n}}.$ The identity matrix of size $n$, $I_n$, is the only element of $\mathcal{C}_{\hat{\emptyset}^{\uparrow n}}$, while the elements of $\mathcal{C}_{\hat{\lambda}_a^{\uparrow n}}$ (resp. $\mathcal{C}_{\hat{\delta}_{a^{-1}}^{\uparrow n}}$) are the matrices conjugated to $I_{a,n-1}$ (resp. $I_{a^{-1},n-1}$) where :
$$I_{a,n-1}=\begin{pmatrix}
a&0&0&\cdots &0\\
0&1&0&\cdots &0\\
\vdots &0 &\ddots &\cdots & \vdots\\
0&0&\cdots &\ddots &0\\
0&0&\cdots &\cdots &1
\end{pmatrix} \text{(resp.} I_{a^{-1},n-1}=\begin{pmatrix}
a^{-1}&0&0&\cdots &0\\
0&1&0&\cdots &0\\
\vdots &0 &\ddots &\cdots & \vdots\\
0&0&\cdots &\ddots &0\\
0&0&\cdots &\cdots &1
\end{pmatrix}).$$
Let $A$ be a matrix conjugated to $I_{a,n-1}$, say $A=MI_{a,n-1}M^{-1}$ for a matrix $M\in GL_n(\mathbb{F}_q)$, then $B=MI_{a^{-1},n-1}M^{-1}$ is conjugated to $I_{a^{-1},n-1}$ and $AB=I_n.$ That shows that the coefficient we are looking for is equal to the size of the conjugacy class of  $I_{a,n-1}$ which is :
$$q^{n-1}\frac{q^{n}-1}{q-1}=\frac{1}{q(q-1)}\big((q^n)^2-q^n\big).$$
\end{ex}

Unfortunately, our general framework does not contain the case of the center of the $GL_n(\mathbb{F}_q)$ group algebra. In fact, the sequence of $GL_n(\mathbb{F}_q)$ does not satisfy the fourth hypothesis H.4 given in Section \ref{sec:hypo_defi} and thus Theorem \ref{center_mini_th} could not be applied. Below, we give an explicit counter-example :

\begin{c-ex}\label{contre_exemple_GL}[to H.4 in the case of $GL_n(\mathbb{F}_q)$]
For $k_1=2$ and $k_2 = 1,$ the $5\times 5$ following matrix :
$$\begin{pmatrix}
1&1&1&0&0\\
1&0&0&0&0\\
0&1&0&0&1\\
0&0&1&0&0\\
0&0&0&1&0
\end{pmatrix}$$
can not be made to be a matrix with the following form
$$\begin{pmatrix}
*&*&*&0&0\\
*&*&*&0&0\\
*&*&*&0&0\\
0&0&0&1&0\\
0&0&0&0&1
\end{pmatrix}$$
by using the elementary operations on the three last columns and the lines 2 to 5. These elementary operations are equivalent as to see the left (right) class of $GL_5(\mathbb{F}_q)^2$ ($GL_5(\mathbb{F}_q)^1$) where $GL_n(\mathbb{F}_q)^k$ is the sub-group in $GL_n(\mathbb{F}_q)$ of matrices with the following form :

$$ \left(\begin{array}{c|c}
  \begin{array}{cccccc}
    I_k
  \end{array}

 & \mbox{\Huge $0$}\\
 
 \hline
 
 \begin{matrix}
 \mbox{\Huge $0$}

\end{matrix}
 &
 
 \begin{matrix}
 * & * &\cdots &*\\
 * &\ddots &\cdots &* \\
 \vdots &\ddots &\cdots &\vdots \\
 * &\cdots &\cdots &*

\end{matrix}
 
 \end{array}\right)$$

\end{c-ex}

\begin{rem}
Since we present here the first case of interesting algebras where our general framework can not be used, it will be fair to clarify the following point. The reader must have remarked in Counter-example \ref{contre_exemple_GL} to H.4 in the case of $GL_n(\mathbb{F}_q)$ that we tried "one" sub-groups family with elements $GL_n(\mathbb{F}_q)^k$ for which hypothesis H.4 is not satisfied. However, which we demand, in our general framework, is the "existence", for any $k,$ of a sub-group $K_n^k$ which verifies the necessarily hypotheses H.0 to H.5. Since we have just tried one (and not all possible) sub-group of $GL_n(\mathbb{F}_q),$ in Counter-example \ref{contre_exemple_GL}, we can not "directly" say that our general framework can not be applied in the case of $Z(GL_n(\mathbb{F}_q)).$ Which makes us almost-sure that our general framework does not contain the case of the sequence of $GL_n(\mathbb{F}_q)$ is the fact that the sub-groups $GL_n(\mathbb{F}_q)^k$ which we tried are the natural ones in this case and they are similar (in this case) to the sub-groups $\mathcal{S}_n^k$ and $\mathcal{B}_n^k$ (which were the good choices) in the case of symmetric and hyperoctahedral groups.

We will use the same logic in the next cases of algebras where we can not apply our general framework. When we give a counter-example, that will mean that the sub-groups which we are using in that case are the most natural to try. 
\end{rem}

Note that hypothesis H.4 given in Section \ref{sec:hypo_defi} is the most important in our reasoning. It allows us to have a representative of the class $K_n^{k_1}xK_n^{k_2}$ in $K_{k_1+k_2}$ and it allows the index $k,$ in the sums of our principal theorems, to be bounded by an integer which does not depend on $n,$ which is crucial to obtain polynomials.

Since we can not apply Theorems \ref{main_th} and \ref{mini_th} in the case of the sequence of $GL_n(\mathbb{F}_q),$ it will be natural to ask the following question :
\begin{que}\label{ques:GL}
Is it possible to modify the hypotheses in our general framework, while still allowing the possibility to obtain theorems similar to Theorem \ref{main_th} and Theorem \ref{mini_th}, in order to include the case of the sequence of the centers of the $GL_n(\mathbb{F}_q)$ groups algebras in our list of applications ?
\end{que}

\subsection{Super-classes and double-classes}

The theory of super-characters and super-classes of group algebras is studied with details in \cite{diaconis2008supercharacters} by Diaconis and Isaacs. By definition, the super-classes are unions of conjugacy classes. In this section, we show that the super-classes of a finite group $G$ (of the form $1+J,$ where $J$ is a nilpotent algebra) are in bijection with the double-classes of a particular pair of groups. In the next section we consider the case where $G$ is the group of uni-triangular matrices.

Let $J$ be a nilpotent\footnote{An algebra $\mathcal{A}$ is nilpotent if there exists an integer $n$ such that $x^n=0$ for any $x\in \mathcal{A}$} and associative algebra with finite dimension over a finite field. We consider the set $G=1+J$ (formal sum) of elements which are written in the form $1+x$ where $x\in J.$ The set $G$ is a group with law defined as follows:
$$(1+x)(1+y)=1+x+y+xy,$$
for any $x,y\in J.$ The direct product group $H=G\times G$ acts on $J$ by :
$$(u,v)\cdot x=uxv^{-1},$$
where $x\in J$ and $(u,v)\in G\times G.$ A $\textit{super-class}$ of $G$ is an orbit of this action, that means a set of the form $1+(G,G)\cdot x,$ for a certain $x\in J$, where :
$$1+(G,G)\cdot x=\lbrace 1+uxv^{-1} \, ; \, u,v\in G\rbrace.$$

We consider the semi-direct product of groups $(G\times G)$ and $J$ denoted by $(G\times G)\ltimes J.$ As a set, $(G\times G)\ltimes J$ is the set of elements of the form $((u,v),x),$ where $u,v\in G$ and $x\in J.$ The product in $(G\times G)\ltimes J$ is defined as follows :
$$((u,v),x)\cdot ((u',v'),x')=((uu',vv'), x+ux'v^{-1}),$$
for any elements $((u,v),x), ((u',v'),x')\in (G\times G)\ltimes J.$ The set of elements of the form $((u,v),0)$ is a sub-group of $(G\times G)\ltimes J$ isomorphic to $G\times G.$ For two elements $(g_1,g_2)$ and $(g'_1,g'_2)$ of $G\times G,$ and for any element $((u,v),x)$ of $(G\times G)\ltimes J,$ we have :
$$(g_1,g_2)\cdot ((u,v),x) \cdot (g'_1,g'_2):=((g_1,g_2),0)\cdot ((u,v),x) \cdot ((g'_1,g'_2),0)=((g_1ug'_1,g_2vg'_2),g_1xg_2^{-1}).$$
There is a bijection between the super-classes of $G$ and the double-classes of $G\times G$ in $(G\times G)\ltimes J.$ For an element $x\in J,$ all the elements of the form $((u,v),x)\in (G\times G)\ltimes J$ are in the same $G\times G$ double-class in $(G\times G)\ltimes J$ which is the image of the super-class of $x$ by this bijection. Explicitly, the function $\psi$ defined as follows :
$$\begin{array}{ccccc}
\psi & : & \mathcal{SC}(G) & \longrightarrow & G\times G\setminus(G\times G)\ltimes J/ G\times G \\
& & 1+(G,G)\cdot x & \mapsto & (G\times G)\times GxG\\
\end{array},$$
where $\mathcal{SC}(G)$ is the set of super-classes of $G,$ is a bijection. In what follows, for an element $x\in J$, we will denote by $\mathcal{SC}(x)$ (resp. $\mathcal{DC}(x)$) the super-class (resp. double-class) of $G$ (resp. $G\times G$ in $(G\times G)\ltimes J$) associated to $x.$ By linearity, $\psi$ can be extended over the algebra of super-classes of $G.$ To have a morphism of algebras, we should consider $\frac{\psi}{|G|^2}.$ In other words, we have the following proposition.
\begin{prop}
The function $\frac{1}{|G|^2}\psi:\mathbb{C}[\mathcal{SC}(G)]\rightarrow \mathbb{C}[G\times G\setminus(G\times G)\ltimes J/ G\times G]$ defined on the basis elements by: 
$$(\frac{1}{|G|^2}\psi)(\mathcal{SC}(x)):=\frac{1}{|G|^2}\psi(\mathcal{SC}(x))=\frac{1}{|G|^2}\mathcal{DC}(x),$$
is a morphism of algebras.
\end{prop}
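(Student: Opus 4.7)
The plan is to check the homomorphism identity $(\tfrac{1}{|G|^{2}}\psi)(AB) = (\tfrac{1}{|G|^{2}}\psi)(A)\cdot(\tfrac{1}{|G|^{2}}\psi)(B)$ on the basis elements $A = \mathcal{SC}(x)$, $B = \mathcal{SC}(y)$ by expanding both sides as concrete formal sums and matching coefficients. Writing $[x] := (G,G)\cdot x$ for the $G\times G$-orbit of $x$ in $J$, the formal-sum conventions of the paper give $\mathcal{SC}(x) = \sum_{a\in[x]}(1+a)$ and $\mathcal{DC}(x) = \sum_{(u,v)\in G\times G,\, a\in[x]}((u,v),a)$, so $|\mathcal{DC}(x)| = |G|^{2}|\mathcal{SC}(x)|$ — this $|G|^{2}$ discrepancy between a super-class and its associated double-class is the origin of the scaling factor.

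First I would compute $\mathcal{SC}(x)\cdot\mathcal{SC}(y)$ inside $\mathbb{C}[G]$. Since $(1+a)(1+b) = 1 + (a+b+ab)$,
\[
\mathcal{SC}(x)\,\mathcal{SC}(y) \;=\; \sum_{a\in[x],\,b\in[y]}(1+a+b+ab) \;=\; \sum_{z\in J} M(z)\,(1+z),
\]
with $M(z) := |\lbrace (a,b)\in[x]\times[y] : a+b+ab = z\rbrace|$. Next, using the semidirect product law and changing variables $(u,v) = (u_{1}u_{2},v_{1}v_{2})$, then observing that for each fixed $(u_{1},v_{1})$ the map $b\mapsto b' := u_{1}bv_{1}^{-1}$ is a bijection of $[y]$ (the left-right action preserves orbits), the $|G|^{2}$ independent choices of $(u_{1},v_{1})$ factor out of the sum:
\[
\mathcal{DC}(x)\,\mathcal{DC}(y) \;=\; |G|^{2}\sum_{z\in J} N(z)\sum_{(u,v)\in G\times G}((u,v),z),
\]
where $N(z) := |\lbrace (a,b')\in[x]\times[y] : a+b' = z\rbrace|$.

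The heart of the argument is the identity $M(z) = N(z)$ for every $z\in J$. For fixed $a\in[x]$, the substitution $b\mapsto (1+a)b$ is precisely the action of $((1+a),1)\in G\times G$ on $J$, hence a bijection of $[y]$ onto itself; moreover $a+b+ab = a+(1+a)b$, so setting $b' := (1+a)b$ yields $a+b+ab = a+b'$. The correspondence $(a,b)\leftrightarrow(a,b')$ is therefore the required bijection between the sets counted by $M(z)$ and $N(z)$. A useful by-product is that $M$ inherits the manifest $G\times G$-invariance of $N$, which proves that $\mathbb{C}[\mathcal{SC}(G)]$ is closed under multiplication and that $\mathcal{SC}(x)\,\mathcal{SC}(y) = \sum_{[z]} M(z_{0})\,\mathcal{SC}(z_{0})$ over orbit representatives. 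Assembling the pieces, both $(\tfrac{1}{|G|^{2}}\psi)(\mathcal{SC}(x)\mathcal{SC}(y))$ and $(\tfrac{1}{|G|^{2}}\psi)(\mathcal{SC}(x))\cdot(\tfrac{1}{|G|^{2}}\psi)(\mathcal{SC}(y))$ reduce to $\tfrac{1}{|G|^{2}}\sum_{[z]}M(z_{0})\,\mathcal{DC}(z_{0})$, the factor $|G|^{2}$ from the double-class expansion absorbing one copy of $|G|^{-2}$. The only genuinely non-routine ingredient is spotting the substitution $b\leftrightarrow(1+a)b$, which converts the twisted sum of the group law in $G=1+J$ into the plain additive sum in the semidirect product; everything else is bookkeeping with the semidirect product rule and the bilinearity of the $G\times G$-action on $J$.
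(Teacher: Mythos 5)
Your proof is correct and follows essentially the same route as the paper's: both reduce the statement to matching the coefficients of $\mathcal{SC}(z)$ and $\mathcal{DC}(z)$, extract the factor $|G|^{2}$ from the redundant group coordinates in the double-class product, and hinge on the same substitution (multiplying the second orbit element on the left by $1+a$, i.e.\ by an element of $G=1+J$) to turn $a+b+ab$ into a plain sum $a+b'$. The only difference is bookkeeping: the paper counts tuples of group elements $(a,b,c,d)\in G^{4}$ parametrizing the orbits (with uniform multiplicity), whereas you count orbit elements directly and make the orbit-invariance of the coefficients explicit.
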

\begin{proof}
We will prove that $\frac{1}{|G|^2}\psi$ is compatible with the products. That means that
\begin{equation}\label{ex:psi_morphisme}
\frac{1}{|G|^2}\psi(\mathcal{SC}(x)\mathcal{SC}(y))=\frac{1}{|G|^2}\psi(\mathcal{SC}(x))\frac{1}{|G|^2}\psi(\mathcal{SC}(y)), \text{ for any $x,y\in J$.}
\end{equation} 
Let $x,y$ and $z$ be three fixed elements of $J,$ we denote by $s_{xy}^{z}$ (resp. $d_{xy}^{z}$) the coefficient of $\mathcal{SC}(z)$ (resp. $\mathcal{DC}(z)$) in the expansion of the product $\mathcal{SC}(x)\mathcal{SC}(y)$ (resp. $\mathcal{DC}(x)\mathcal{DC}(y)$). Proving Equation \eqref{ex:psi_morphisme} is equivalent to prove that $d_{xy}^{z}=|G|^2s_{xy}^{z}.$ By using the direct way of computing structure coefficients (classical, see for example \cite[Proposition 2.1]{toutAFrobe}) we can write :
$$s_{xy}^{z}=|\lbrace (a,b,c,d)\in G^4 \text{ such that } axb+cyd+axbcyd=z\rbrace|=:|A_{xy}^{z}|,$$
and
$$d_{xy}^{z}=|\lbrace (a,b,c,d,e,f)\in G^6 \text{ such that } axb+cdyef=z\rbrace|$$$$=|G|^2|\lbrace (a,b,c',d')\in G^4 \text{ such that } axb+c'yd'=z\rbrace|.$$
If we define $B_{xy}^{z}$ to be the following set :
$$B_{xy}^{z}:=\lbrace (a,b,c',d')\in G^4 \text{ such that } axb+c'yd'=z\rbrace,$$
then the function defined on $A_{xy}^{z}$ with values in $B_{xy}^{z}$ which for an element $(a,b,c,d)$ return $(a,b,c(1+axb),d)$ is a bijection with inverse the function $$(a,b,c,d)\mapsto (a,b,c(1+axb)^{-1},d).$$ That means that $d_{xy}^{z}=|G|^2s_{xy}^{z}$ and that $\frac{1}{|G|^2}\psi$ is a morphism.
\end{proof}

In appendix B in \cite{diaconis2008supercharacters}, Diaconis and Isaacs present the link between super-characters of $G$ and the zonal spherical functions of the pair $\big((G\times G)\ltimes J,G\times G\big).$ In this section, we have explicitly showed the link between the super-classes of $G$ and the double-classes of the pair $\big((G\times G)\ltimes J,G\times G\big).$ It is probably possible to pass from one of these results to the another by using \cite[Proposition 1.47]{touPhd14} and an equivalent result for super-characters (in the case such a result exists). However, we preferred to present here a direct proof for the equivalence between the algebra of super-classes of $G$ and the double-class algebra of the pair $\big((G\times G)\ltimes J,G\times G\big).$

\subsection{The super-classes of the group of uni-triangular matrices}\label{super-classe-matr-uni}

The theory of super-characters and super-classes of the group of uni-triangular matrices algebra is in relation with the theory of symmetric functions with non-commutative variables, see \cite{andre2013supercharacters}. We start this section by defining the super-classes of uni-triangular matrices group and then we explain this relation at the end of this section once are given all necessary information to present it.

Let $\mathbb{K}$ be a finite field of order $q.$ For any $n\in \mathbb{N}$, we denote by $U_n$ the group of upper uni-triangular matrices with coefficients in $\mathbb{K}.$ If we denote by $\mathbf{u}_n$ the algebra over $\mathbb{K}$ of strictly upper triangular matrices with coefficients in $\mathbb{K}$, then we have, $U_n=I_n+\mathbf{u}_n$, where $I_n$ is the identity matrix of size $n.$

Let $n$ be a non-negative integer, we define $[[n]]:=\lbrace (i,j):1\leq i<j\leq n\rbrace.$
A set partition $\pi$ of $n$, written $\pi=B_1/B_2/\cdots /B_l$, is a family of non-empty sets $B_i$ such that $B_1\sqcup B_2\sqcup\cdots \sqcup B_l=[n].$ The $B_i$'s are called blocks of $\pi$ and $l(\pi)$ is the number $l$ of these blocks. Note that we are not interested in the order of the blocks of a set partition. For example :
$$\sigma=1~~2~~5/3/4~~8~~9~~10/6~~7$$
is a set partition of $10$ and $l(\sigma)=4.$ We denote by $\mathcal{SP}_n$ the set of set partitions of $n.$

By convention, we will always put the elements of a block $B$ in an increasing order, and for a block $B=b_1b_2\cdots b_k$ we associate a set of arcs, denoted $D(B)$, 
$$D(B):=\lbrace (b_1,b_2),(b_2,b_3),\cdots, (b_{k-1},b_k)\rbrace.$$
The set of arcs of a set partition $\pi$, denoted $D(\pi)$ is the disjoint union of the sets of arcs of the blocks of $\pi.$ For example : $$D(\sigma)=\lbrace (1~~2),(2~~5),(4~~8),(8~~9),(9~~10),(6~~7)\rbrace.$$
It is evident that for a set partition $\pi$ of $n$, $D(\pi)\subset [[n]].$ The inverse is not always true. That means that there are sub-sets of $[[n]]$ which do not correspond to any set partition of $n.$ For example, $D=\lbrace (1,2),(1,3)\rbrace\subset [[3]]$ could not be the set of arcs of any set partition of $3.$

For a set partition $\pi$ of $n,$ we can associate an $n\times n$ (strictly) upper triangular matrix, denoted $M(\pi),$ with entries the integers $0$ and $1.$ The matrix $M(\pi)$ is coded by the elements of $D(\pi).$ The entry $m_{ij}$ is $1$ if the arc $(i,j)$ is an element of $D(\pi)$ and $0$ if-not.

Let $\mathbb{K}^*=\mathbb{K}\setminus\lbrace 0\rbrace$, a $\mathbb{K}^*$-coloured set partition of $n$ is a pair $(\pi,\phi)$, where $\pi$ is a set partition of $n$ and $\phi:D(\pi)\rightarrow \mathbb{K}^*$ is a function. We will write $(\pi,\phi)=((a_1,\alpha_1),(a_2,\alpha_2),\cdots,(a_r,\alpha_r))$, where $D(\pi)=\lbrace a_1,a_2,\cdots ,a_r\rbrace$ and $\alpha_i=\phi(a_i)$, $1\leq i\leq r.$ For a $\mathbb{K}^*$-coloured set partition of $n,$ we can associate a $n\times n$ (strictly) upper triangular matrix, denoted $M(\pi,\phi),$ with entries in $\mathbb{K}^*.$ The matrix $M(\pi,\phi)$ has the same form as $M(\pi)$ with entry $\alpha(i,j)$ (instead of $1$) if the arc $(i,j)$ is in the set $D(\pi).$ 

Let $\pi$ be a set partition of $k$ and let $n$ be an integer greater than $k.$ We can make, in a natural way, a set partition of $n$ using $\pi$ by adding the $n-k$ blocs $k+1/k+2/\cdots/ n$ to $\pi.$ We denote this set partition by $\pi^{\uparrow n} :$
$$\pi^{\uparrow n}:=\pi/k+1/k+2/\cdots/ n.$$
In term of matrices, $M(\pi^{\uparrow n})$ is the $n\times n$ (strictly) upper triangular matrix obtained from $M(\pi)$ by adding $n-k$ $0$-columns and $0$-lines to $M(\pi).$

We say that a set partition $\pi$ of $n$ is \textit{proper} if $n$ is not alone in its block in $\pi.$ For example, the set partition $156/237/4$ of $7$ is proper while $156/7/234$ is not. We denote by $\mathcal{PSP}_n$ the set of proper set partitions of $n.$ There is a natural bijection between $\mathcal{SP}_n$ and the set $\mathcal{PSP}_{\leq n},$ 
$$\mathcal{PSP}_{\leq n}:=\bigsqcup_{1\leq k\leq n} \mathcal{PSP}_k .$$  

The super-classes of the uni-triangular group are indexed by $\mathbb{K}^*$-coloured set partitions, see \cite{andre2013supercharacters} for more details about the theory of super-characters and super-classes of the uni-triangular group. For each element $(\pi,\phi)$ of $\mathcal{SP}_n(\mathbb{K})$, the set of $\mathbb{K}^*$-coloured set partitions of $n$, we denote by $\mathcal{O}_{\pi,\phi}$ the $U_n$-double-class $U_nM(\pi,\phi)U_n$ and by $\mathcal{K}_{\pi,\phi}=I_n+\mathcal{O}_{\pi,\phi}$ the super-class of $U_n$ associated to $(\pi,\phi).$

Let $(\pi,\phi)$ and $(\sigma,\psi)$ be two proper $\mathbb{K}$-coloured set partitions of $k_1$ et $k_2$ respectively and let $n$ be an integer greater than $k_1$ and $k_2,$ then we have :
\begin{equation}
\mathcal{K}_{\pi,\phi}(n)\mathcal{K}_{\sigma,\psi}(n)=\sum_{(\rho,\theta)\in \mathcal{PSP}_{\leq n}}d_{(\pi,\phi),(\sigma,\psi)}^{(\rho,\theta)}(n)\mathcal{K}_{\rho,\theta}(n).
\end{equation}

\begin{que}
Do the coefficients $d_{(\pi,\phi),(\sigma,\psi)}^{(\rho,\theta)}(n)$ have a polynomiality property in $q^n$ ?
\end{que}

As in the case of $GL_n(\mathbb{F}_q),$ H.4 is not satisfied for the sequence of $U_n$ and probably\footnote{We suppose, to come to this conclusion, that the sub-groups $(U_n\times U_n)^{k}$ which we are looking for are of the form $(U_n)^{k} \times (U_n)^{k},$ which appears reasonable to us in this case. But that does not prove that we can not find sub-groups of $U_n \times U_n$ which are not cartesian products of sub-groups of $U_n$ with themselves and for which hypothesis H.4 is satisfied. We should also mention, that in the general case, if a sequence of subgroups $K_n$ satisfies hypotheses H.1 to H.5 then the sequence of subgroups $K_n\times K_n$ satisfies these same hypotheses. Just take $(K_n\times K_n)^k$ to be $K_n^k\times K_n^k$ for any $k\leq n.$} not for $U_n\times U_n$ (because $\mathrm{k}( (U_n\times U_n)^{k_1} (x,y) (U_n\times U_n)^{k_2}) = \max 
(\mathrm{k}(U_n^{k_1}xU_n^{k_2}), \mathrm{k}(U_n^{k_1}yU_n^{k_2})$). So it is interesting to re-ask the Question \ref{ques:GL} in order to include this case also in our general framework.

As mentioned, the study of the super-classes of the group of uni-triangular matrices is in relation with the theory of symmetric functions in non-commuting variables. In fact, the vector space $$\SC:=\bigoplus_n \SC_n,$$ where $\SC_n$ is the vector space generated by the super-characters of the uni-triangular group $U_n,$ is isomorphic "as a Hopf algebra" to the algebra of symmetric functions in non-commuting variables called $\NCSym,$ see \cite[Section 4.4]{andre2013supercharacters}. The symmetric functions in non-commuting variables are studied by Wolf in \cite{wolf1936symmetric}. 

The algebra $\NCSym$ introduced by Rosas and Sagan in \cite{rosas2006symmetric} can be viewed as an extension of the algebra $\Lambda$ of symmetric functions. It has many basis families, indexed by set partitions, similar to that of power functions, monomial functions, elementary functions, etc. To illustrate this, we take for example the monomial functions. If $\pi\in \mathcal{SP}_n,$ a monomial of the form $\pi$ in non-commuting variables is a product $x_{i_1}x_{i_2} \cdots x_{i_n} $ where
$i_r = i_s$ if and only if $r$ and $s$ are in the same bloc of $\pi.$ For example, $x_1x_2x_1x_2$ is a monomial of the form $1\,\,3/2\,\,4$ in non-commuting variables. If $\pi$ is a set partitions, the monomial symmetric function in non-commuting variables $m_\pi$ is defined to be the sum of all monomials in non-commuting variables of the form $\pi.$ For example,
$$m_{1\,\,3/2\,\,4}=x_1 x_2 x_1 x_2 + x_2 x_1 x_2 x_1 + x_1 x_3 x_1 x_3 + x_3 x_1 x_3 x_1 + x_2 x_3 x_2 x_3 + \cdots.$$
The family $(m_\pi)$ indexed by set partitions forms a basis for $\NCSym.$ For more details about this algebra, the reader can see \cite{wolf1936symmetric}, \cite{rosas2006symmetric}, \cite{gebhard2001chromatic} and \cite{andre2013supercharacters}.

\subsection{Generalisation of the Hecke algebra of the pair $(\mathcal{S}_{2n},\mathcal{B}_n)$}

Let $n$ and $k$ be two positive integers. We consider the symmetric group $\mathcal{S}_{kn}.$ We denote by $\mathcal{B}_{kn}^k$ the following set :
$$\mathcal{B}_{kn}^k:=\lbrace \sigma\in \mathcal{S}_{kn} \text{ such that for each $1 \leq r \leq n$ there exists $1 \leq r' \leq n$: } \sigma(p_k (r)) =p_k( r' )\rbrace,$$
where $p_k(i)= \lbrace (i-1)k+1, .....,  ik \rbrace.$ The set $\mathcal{B}_{kn}^k$ is a sub-group of $\mathcal{S}_{kn}$ for each $n$ and $k.$ With these notations, the pair $(\mathcal{S}_{2n},\mathcal{B}_n)$ is none other than the pair $(\mathcal{S}_{2n},\mathcal{B}_{2n}^2).$

\begin{que}\label{ques1_B}
How to define the 'type' of a permutation $\sigma$ of $\mathcal{S}_{kn}$ such that two permutations are in the same double-class $\mathcal{B}_{kn}^k\sigma \mathcal{B}_{kn}^k$ if and only if they both have the same 'type' as $\sigma$ ?
\end{que}

\begin{que}\label{ques2_B}
Is the double-class algebra of $\mathcal{B}_{kn}^k$ in $\mathcal{S}_{kn}$ commutative ?
\end{que}

\begin{que}\label{ques3_B}
Does the double-class algebra of $\mathcal{B}_{kn}^k$ in $\mathcal{S}_{kn}$ enter in our general framework ? In other way, is it possible to give a polynomiality property for the structure coefficients of this algebra by using Theorem \ref{mini_th} ?
\end{que}

If the response to Question \ref{ques3_B} is positive, we may put the following additional question :

\begin{que}\label{ques4_B}
Is the partial elements algebra associative in this case ? If not, is it possible to build a similar but associative algebra which plays the same role ? In addition, is there any relation between this algebra and the algebra of symmetric functions ?
\end{que}

Otherwise, that means if the response to Question \ref{ques3_B} is negative, we may ask the following question :

\begin{que}\label{ques5_B}
Could-we adapt the approach of the author's paper \cite{toutejc} to establish a polynomiality property in this case ?
\end{que}



An interesting sub-group of $\mathcal{B}_{kn}^k$ is that of permutations of $nk$ coloured with $k$ colours. This group has different equivalent definitions, see \cite{bagno2006excedance}. Here we use the definition which is coherent with our work and notations and we denote this group by $\mathcal{C}_{kn}^k.$

For each $1\leq i \leq n,$ we denote by $c^k_i$ the following cycle of length $k$ :
$$c_i^k:=( (i - 1)k + 1, \cdots , ik ).$$ 
We say that a permutation $\omega$ of $nk$ is $k$-coloured if $\omega$ sends every cycle $c_i^k$ to another cycle with the same form. For example :
$$p=\begin{pmatrix}
1&2&3&4&5&6&7&8&9&10&11&12\\
6&4&5&11&12&10&8&9&7&1&2&3
\end{pmatrix}\in \mathcal{C}_{12}^3,$$
but
$$p'=\begin{pmatrix}
1&2&3&4&5&6&7&8&9&10&11&12\\
6& \color{red}{5}&\color{red}{4}&11&12&10&8&9&7&1&2&3
\end{pmatrix},$$
is in $\mathcal{B}_{12}^3$ without being in $\mathcal{C}_{12}^3$ because the images of $1,$ $2$ and $3$ are not correctly (cyclically) ordered. The group $\mathcal{C}_{kn}^k$ is the set of $k$-coloured permutations of $nk,$ 
$$\mathcal{C}_{kn}^k:=\lbrace \omega\in \mathcal{S}_{nk} \text{ such that for each $1 \leq i \leq n$ there exists $1 \leq i' \leq n$: } \omega(c_i^k)=c_{i'}^k, \,\, 1\leq i,i'\leq n\rbrace.$$

It is also interesting to consider Questions \ref{ques1_B}, \ref{ques2_B}, \ref{ques3_B}, \ref{ques4_B} and \ref{ques5_B}, by replacing $\mathcal{B}_{kn}^k$ by $\mathcal{C}_{kn}^k.$

\subsection{The Iwahori-Hecke algebra and its center}\label{sec:alg_Iwahori_Hecke}

The Iwahori-Hecke algebra, denoted $\mathcal{H}_{n,q}$ is an algebra over $\mathbb{C}(q)$ which generalises the symmetric group algebra. When $q=1,$ this algebra is the symmetric group algebra $\mathbb{C}[\mathcal{S}_n].$ According to a result of Iwahori, see \cite{iwahori1964structure}, when $q$ is a power of a prime number, this algebra is isomorphic to the double-class algebra of superior triangular matrices group in $GL_n(\mathbb{F}_q).$ This result can also be found in \cite[Section 8.4]{geck2000characters}.

The Geck-Rouquier elements defined in \cite{geck1997centers} form a basis for the center $Z(\mathcal{H}_{n,q})$ of the Iwahori-Hecke algebra. They are indexed by partitions of $n$ and usually denoted by $\Gamma_{\lambda,n}.$ In \cite{francis1999minimal}, Francis gives a characterisation for these elements. The Geck-Rouquier elements $\Gamma_{\lambda,n}$ become conjugacy classes when $q=1.$

A polynomiality property for the structure coefficients $a_{\lambda\delta}^\rho(n)$ of the center of the Iwahori-Hecke algebra defined by the following equation :
\begin{equation}
\Gamma_{\lambda,n}\Gamma_{\delta,n}=\sum_{|\rho|\leq |\lambda|+|\delta|}a_{\lambda\delta}^\rho(n)\Gamma_{\rho,n}
\end{equation}
was conjectured by Francis and Wang in \cite{francis1992centers}. In \cite{meliot2010products}, Méliot proves this result.

The author can not find in the literature a result which describes the center of the Iwahori-Hecke algebra $\mathcal{H}_{n,q}$ (and not the algebra $\mathcal{H}_{n,q}$ itself) as a double-class algebra in order to see whether or not our general framework presented in this paper can contain the result of polynomiality for the structure coefficients given by Méliot.

\begin{que}
Is the center of the Iwahori-Hecke algebra a double-class algebra ? If yes, does our general framework include it ? (that means: can we re-obtain the polynomiality property of its structure coefficients, given by Méliot, using Theorem \ref{main_th} ?)  
\end{que}

\section*{Acknowledgement}
I started thinking about this generalisation while I was a Phd student under the supervision of Jean-Christophe Aval and Valentin Féray. They both encouraged and helped me to build the general framework presented here. Without them, I would never had the courage to take this challenge. So I would like to thank them for this.

\bibliographystyle{alpha}
\bibliography{biblio}
\label{sec:biblio}
\end{document}